\newcommand{\eps}{\varepsilon}
\newcommand{\C}{\mathbb{C}}
\newcommand{\N}{\mathbb{N}}
\newcommand{\R}{\mathbb{R}}
\renewcommand{\S}{\mathbb{S}}
\newcommand{\boA}{\mathcal{A}}
\newcommand{\boC}{\mathcal{C}}
\newcommand{\boD}{\mathcal{D}}
\newcommand{\boE}{\mathcal{E}}
\newcommand{\boF}{\mathcal{F}}
\newcommand{\boG}{\mathcal{G}}
\newcommand{\boH}{\mathcal{H}}
\newcommand{\boI}{\mathcal{I}}
\newcommand{\boJ}{\mathcal{J}}
\newcommand{\boL}{\mathcal{L}}
\newcommand{\boN}{\mathcal{N}}
\newcommand{\boO}{\mathcal{O}}
\newcommand{\boR}{\mathcal{R}}
\newcommand{\boT}{\mathcal{T}}
\newcommand{\boU}{\mathcal{U}}
\newcommand{\boV}{\mathcal{V}}
\newcommand{\ga}{\mathfrak{a}}
\newcommand{\gb}{\mathfrak{b}}
\newcommand{\gc}{\mathfrak{c}}
\newcommand{\gv}{\mathfrak{v}}
\DeclareMathOperator{\Dom}{{\rm Dom}}
\renewcommand{\Im}{\mathop{{\rm Im}}\nolimits}
\DeclareMathOperator{\Ker}{{\rm Ker}}
\renewcommand{\Re}{\mathop{{\rm Re}}\nolimits}
\DeclareMathOperator{\Span}{{\rm Span}}
\renewcommand{\th}{{\rm th}}
\newtheorem*{claim*}{Claim}
\newtheorem{claim}{Claim}
\newtheorem{cor}{Corollary}
\newtheorem{lem}{Lemma}
\newtheorem{prop}{Proposition}
\newtheorem{step}{Step}
\newtheorem{thm}{Theorem}
\theoremstyle{definition}
\newtheorem*{merci}{Acknowledgments}
\newtheorem*{remark}{Remark}
\newtheorem*{remarks}{Remarks}
\theoremstyle{remark}
\begin{document}
\title{Asymptotic stability in the energy space for dark solitons of the Landau-Lifshitz equation }
\author{\renewcommand{\thefootnote}{\arabic{footnote}} Yakine Bahri
\footnotemark[1]}
\footnotetext[1]{Centre de Math\'ematiques Laurent Schwartz, \'Ecole polytechnique, 91128 Palaiseau Cedex,
France. E-mail: {\tt yakine.bahri@polytechnique.edu}}
\maketitle
\begin{abstract}
We prove the asymptotic stability in the energy space of non-zero speed solitons for the one-dimensional Landau-Lifshitz equation with an easy-plane anisotropy 
$$\partial_t m + m \times( \partial_{xx} m - m_3 e_3) = 0$$
for a map $m = (m_1, m_2, m_3) : \R \times \R \to \S^2$, where $e_3 = (0, 0, 1)$. More precisely, we show that any solution corresponding to an initial datum close to a soliton with non-zero speed, is weakly convergent in the energy space as time goes to infinity, to a soliton with a possible different non-zero speed, up to the invariances of the equation. Our analysis relies on the ideas developed by Martel and Merle for the generalized Korteweg-de Vries equations. We use the Madelung transform to study the problem in the hydrodynamical framework. In this framework, we rely on the orbital stability of the solitons and the weak continuity of the flow in order to construct a limit profile. We next derive a monotonicity formula for the momentum, which gives the localization of the limit profile. Its smoothness and exponential decay then follow from a smoothing result for the localized solutions of the Schrödinger equations. Finally, we prove a Liouville type theorem, which shows that only the solitons enjoy these properties in their neighbourhoods.
\end{abstract}
\numberwithin{cor}{section}
\numberwithin{lem}{section}
\numberwithin{prop}{section}
\numberwithin{thm}{section}
\section{Introduction}
We consider the one-dimensional Landau-Lifshitz equation
\renewcommand{\theequation}{LL}
\begin{equation}
\label{LL}
\partial_t m + m \times( \partial_{xx} m + \lambda m_3 e_3) = 0,
\end{equation}
for a map $m = (m_1, m_2, m_3) : \R \times \R \to \S^2$, where $e_3 = (0, 0, 1)$ and $\lambda \in \R$. This equation was introduced by Landau and Lifshitz in \cite{LandLif1}. It describes the dynamics of magnetization in a one-dimensional ferromagnetic material, for example in  CsNiF$_3$ or TMNC (see e.g. \cite{KosIvKo1, HubeSch0} and the references therein). The parameter $\lambda$ accounts for the anisotropy of the material. The choices $\lambda > 0$ and $\lambda < 0$ correspond respectively to an easy-axis and an easy-plane anisotropy. In the isotropic case $\lambda = 0$, the equation is exactly the one-dimensional Schrödinger map equation, which has been intensively studied (see e.g. \cite{GuoDing0,JerrSme1}). In this paper, we study the Landau-Lifshitz equation with an easy-plane anisotropy ($\lambda < 0)$. Performing, if necessary, a suitable scaling argument on the map $m$, we assume from now on that $\lambda = - 1$. Our main goal is to prove the asymptotic stability for the solitons of this equation (see Theorem~\ref{thm:stabasympt_m}  below).

The Landau-Lifshitz equation is Hamiltonian. Its Hamiltonian, the so-called Landau-Lifshitz energy, is given by
$$E(m) := \frac{1}{2} \int_\R \big( |\partial_x m|^2 + m_3^2 \big).$$
In the sequel, we restrict our attention to the Hamiltonian framework in which the solutions $m$ to \eqref{LL} have finite Landau-Lifshitz energy, i.e. belong to the energy space
$$\boE(\R) := \big\{ \upsilon : \R \to \S^2, \ {\rm s.t.} \ \upsilon' \in L^2(\R) \ {\rm and} \ \upsilon_3 \in L^2(\R) \big\}.$$
A soliton with speed $c$ is a travelling-wave solution of \eqref{LL} having the form
$$m(x, t) := u(x - c t).$$
Its profile $u$ is a solution to the ordinary differential equation
\renewcommand{\theequation}{TWE}
\begin{equation}
\label{Solc}
u''+ |u'|^2 u + u_3^2 u - u_3 e_3 + c u \times u' = 0.
\end{equation}
The solutions of this equation are explicit. When $|c| \geq 1$, the only solutions with finite Landau-Lifshitz energy are the constant vectors in $\S ^1 \times \{0\}$. In contrast, when $|c| < 1$, there exist non-constant solutions $u_c$ to \eqref{Solc}, which are given by the formulae
\begin{equation*}
[u_c]_1(x) = \frac{c}{\cosh \big( (1 - c^2)^\frac{1}{2} x \big)}, \quad [u_c]_2(x) = \tanh \big( (1 - c^2)^\frac{1}{2} x \big), \quad [u_c]_3(x) = \frac{(1 - c^2)^\frac{1}{2}}{\cosh \big( (1 - c^2)^\frac{1}{2} x \big)},
\end{equation*}
up to the invariances of the problem, i.e. translations, rotations around the axis $x_3$ and orthogonal symmetries with respect to the plane $x_3 = 0$ (see \cite{deLaire4} for more details).

Our goal is to study the asymptotic behaviour for solutions of \eqref{LL} which are initially close to a soliton in the energy space. We endow $\boE(\R)$ with the metric structure corresponding to the distance introduced by de Laire and Gravejat in \cite{DeLGr},
$$d_\boE (f,g):= |\check{f}(0)-\check{g}(0)| +\|f'-g'\|_{L^2(\R)} + \|f_3-g_3\|_{L^2(\R)},$$
where $f=(f_1,f_2,f_3)$ and $\check{f}=f_1+if_2$ (respectively for $g$). The Cauchy problem and the orbital stability of the travelling waves have been solved by de Laire and Gravejat in \cite{DeLGr}. We are concerned the asymptotic stability of travelling waves.
The following theorem is our main result.
\begin{thm}
\label{thm:stabasympt_m}
Let $\gc \in (- 1, 1) \setminus \{ 0 \}$. There exists a positive number $\delta_\gc$, depending only on $\gc$, such that, if
$$d_\boE \big( m^0, u_\gc \big) \leq \delta_\gc,$$
then there exist a number $\gc^* \in (- 1, 1) \setminus \{ 0 \}$, and two functions $b \in \boC^1(\R, \R)$ and $\theta \in \boC^1(\R, \R)$ such that
$$b'(t) \to \gc^*, \quad {\rm and} \quad \theta'(t) \to 0,$$
as $t \to + \infty$, and for which the map
$$ m_{\theta}:= \Big( \cos(\theta)m_1-\sin(\theta)m_2,\sin(\theta)m_1+\cos(\theta)m_2,m_3\Big),$$
satisfies the convergences
\begin{equation*}
\partial_x m_{\theta(t)} \big( \cdot + b(t), t \big) \rightharpoonup \partial_x u_{\gc^*} \quad {\rm in} \ L^2(\R), \quad m_{\theta(t)} \big( \cdot + b(t), t \big) \to u_{\gc^*} \quad {\rm in} \ L^\infty_{\rm loc}(\R),
\end{equation*}
and 
\begin{equation*}
m_3\big( \cdot + b(t), t \big) \rightharpoonup \big[ u_{\gc^*}\big]_3 \quad {\rm in} \ L^2(\R),
\end{equation*}
as $t \to + \infty$.
\end{thm}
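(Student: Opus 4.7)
The plan is to follow the Martel--Merle strategy as adapted to Hamiltonian dispersive equations, working in the hydrodynamical formulation obtained via the Madelung transform. Since $u_\gc$ has $[u_\gc]_3 \in (-1,1)$ (and in particular $1 - u_3^2 > 0$), a neighbourhood of $u_\gc$ in $\boE(\R)$ is stable under the transform $\check{m} = (1 - m_3^2)^{1/2}\exp(i\varphi)$, so that I can work with the pair $(m_3, \partial_x \varphi)$, which satisfies a Schrödinger-type hydrodynamical system. Orbital stability for \eqref{LL}, proven by de Laire--Gravejat, then lets me decompose the solution as $m_{\theta(t)}(\cdot + b(t), t) = u_{c(t)} + \eps(\cdot,t)$, where $c(t) \in (-1,1)\setminus\{0\}$, the modulation parameters $b, \theta \in \boC^1$ satisfy suitable orthogonality conditions, and $\eps$ remains small in the hydrodynamical energy norm uniformly in time.

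Next, I would construct the asymptotic profile. Given a sequence $t_n \to +\infty$, weak compactness gives $\eps(\cdot, t_n) \rightharpoonup \eps^*$, and I can run the flow starting from $u_{c_\infty} + \eps^*$, where $c_\infty = \lim c(t_n)$; weak continuity of the hydrodynamical flow (which must be established in the appropriate spaces) identifies the limit object as a genuine solution of the same system, still close to the soliton family. The crucial input is then a monotonicity formula for a localized version of the momentum $P(m) = \tfrac{1}{2}\int \langle i\partial_x \check{m}, \check{m}\rangle m_3/(1-m_3^2)$, truncated by a smooth step function centred at a point travelling with a speed slightly less than $\gc$. This monotonicity, combined with the smallness of $\eps$, forces the limit profile to decay to $0$ on the left of the soliton; by a symmetric argument, it forces decay on the right, hence global spatial localization.

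Once the limit is localized, I would apply a smoothing estimate à la Kato for the hydrodynamical Schrödinger-type system: a solution whose initial data decays sufficiently fast in space gains regularity and exponential decay in $x$ for all positive times. This upgrades the limit profile to a smooth, exponentially decaying perturbation of a soliton. The decisive step is then the Liouville-type theorem: if a solution of the hydrodynamical \eqref{LL} system stays globally close to, and has uniformly exponentially decaying perturbation from, the soliton manifold $\{u_c\}$, then it must in fact be a soliton $u_{\gc^*}$. I would prove this by writing the equation for the linearized perturbation, using the exponential decay to justify integrations by parts, and exploiting coercivity of the second variation of the Ginzburg--Landau energy-momentum functional $E + cP$ on the orthogonal of the kernel spanned by $\partial_x u_c$ and the rotation generator.

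I expect the Liouville theorem to be the main obstacle: it requires both a sharp understanding of the spectrum of the Hessian of $E + cP$ around $u_c$ (to turn the orbital-stability coercivity into a rigidity statement under the additional exponential decay), and a virial-type identity forcing the perturbation to vanish. The monotonicity of the localized momentum is another delicate point, because the hydrodynamical momentum involves the singular factor $1/(1-m_3^2)$, so the argument has to be set up carefully within the small-$m_3$ regime guaranteed by orbital stability. Once these two ingredients are in place, combining them with the modulation and weak-compactness arguments above yields the weak convergences claimed in the statement, with $b'(t) \to \gc^*$ and $\theta'(t) \to 0$ coming from the modulation equations evaluated along the extracted subsequences and the uniqueness of the limit.
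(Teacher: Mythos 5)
Your outline follows essentially the same route as the paper: Madelung transform to the hydrodynamical pair $(v,w)=(m_3,\partial_x\varphi)$, orbital stability with modulation, weak compactness plus weak continuity of the flow to build a limit profile, a monotonicity formula for the localized momentum $\int vw$ to localize it, Kato-type smoothing to upgrade it to a smooth exponentially decaying object, and a Liouville theorem combining a virial identity with a coercivity estimate. Two points, however, deserve attention.

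First, the genuine gap concerns precisely the part of the proof that is specific to the statement you are asked to prove (as opposed to its hydrodynamical counterpart): the reconstruction of the phase $\theta(t)$ and the deduction of the convergences for $m$ itself. You assert that $\theta'(t)\to 0$ "comes from the modulation equations", but in the hydrodynamical framework there is no phase modulation parameter at all --- the rotation invariance is quotiented out when one passes to $w=\partial_x\varphi$, and the only invariances of the system are translations and the sign flip. The paper therefore has to construct $\theta$ \emph{a posteriori}: it fixes a test function $\chi$, defines $\Upsilon(t,\theta)=\Im\big(e^{-i\theta}\int_\R\check m(x+b(t),t)\chi(x)\,dx\big)$, applies the implicit function theorem to get a $\boC^1$ phase with $\Upsilon(t,\theta(t))=0$ and $\partial_\theta\Upsilon(t,\theta(t))\geq A_{\gc^*}>0$, proves $e^{i(\varphi(b(t),t)-\theta(t))}\to 1$, and only then derives the weak convergence of $e^{-i\theta(t)}\partial_x\check m(\cdot+b(t),t)$ and the bound and decay of $\theta'$. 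Without some such argument your proposal does not yield the $L^\infty_{\rm loc}$ convergence of $m_{\theta(t)}$ or the statement $\theta'(t)\to 0$. Second, a smaller inaccuracy: coercivity of $E''(Q_c)+cP''(Q_c)$ on the orthogonal complement of $\Span(\partial_x Q_c)$ is false, since this operator has a simple negative eigenvalue; one needs a second orthogonality condition (to the negative eigenfunction $\chi_c$, in the paper's normalization), and the coercivity actually exploited in the Liouville step is that of a different quadratic form $G_c$ produced by the virial computation, whose kernel is $\Span(Q_c)$ and whose positivity rests on Sturm--Liouville theory applied to the non-vanishing profile $Q_c$.
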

\begin{remarks}
$(i)$ Note that the case $\gc=0$, that is black solitons, is excluded from the statement of Theorem \ref{thm:stabasympt_m}. In this case, the map $\check{u}_0$ vanishes and we cannot apply the Madelung transform and the subsequent arguments. Orbital and asymptotic stability remain open problems for this case. Note that, to our knowledge, there is currently no available proof of the local well-posedness of \eqref{LL} in the energy space, when $u_0$ vanishes and so the hydrodynamical framework can no longer be used.\\
$(ii)$ Here, we state a weak convergence result and not a local strong convergence one, like the results given by Martel and Merle for the Korteweg-de Vries equation \cite{MartMer6, MartMer5}. In their situation, they can use two monotonicity formulae for the $L^2$ norm and the energy. This heuristically originates in the property that dispersion has negative speed in the context of the Korteweg de Vries equation. In contrast, the possible group velocities for the dispersion of the Landau-Lifshitz equation are given by $v_g(k) = \pm \frac{1+2k^2}{\sqrt{1+k^2}}$, where $k$ is the wave number. Dispersion has both negative and positive speeds. A monotonicity formula remains for the momentum due to the existence of a gap in the possible group velocities, which satisfy the condition $|v_g(k)|\geq 1.$ However, there is no evidence that one can establish a monotonicity formula for the energy.
\end{remarks}

Similar results were stated by Soffer and Weinstein in \cite{SoffWei1, SoffWei2, SoffWei3}. They provided the asymptotic stability of ground states for the nonlinear Schr\"odinger equation with a potential in a regime for which the nonlinear ground-state is a close continuation of the linear one. They rely on dispersive estimates for the linearized equation around the ground state in suitable weighted spaces, and they apply a fixed point argument. This strategy was successful extended in particular by Buslaev, Perelman, C. Sulem and Cuccagna to the nonlinear Schr\"odinger equations without potential (see e.g. \cite{BuslPer1, BuslPer2, BuslSul1, Cuccagn1}) and with a potential (see e.g. \cite{GangSig1}). We refer to the detailed historical survey by Cuccagna \cite{Cuccagn2} for more details. In addition, asymptotic stability in spaces of exponentially localized perturbations was studied by Pego and Weinstein in \cite{PegoWei1} (see also \cite{Mizumac1}
for perturbations with algebraic decay).

Our strategy for establishing the asymptotic stability result in Theorem \ref{thm:stabasympt_m} is reminiscent from ideas developed by Martel and Merle for the Korteweg-de Vries equation \cite{Martel2, MartMer6, MartMer5}, and successfully adapted by Béthuel, Gravejat and Smets in \cite{BetGrSm1} for the Gross-Pitaevskii equation.

The main steps of the proof are similar to the ones for the Gross-Pitaevskii equation in \cite{BetGrSm2}. Indeed, the solitons of the Landau-Lifshitz equation share many properties with the solitons of the Gross-Pitaevskii equation. In fact, the stereographic variable $ \psi$ defined by
$$\psi=\frac{u_1+i u_2}{1+u_3},$$
verifies the following equation
\begin{equation*}
\partial_{xx} \psi +\frac{1-|\psi|^2}{1+|\psi|^2}\psi -ic \partial_x \psi=\frac{2\bar{\psi}}{1+|\psi|^2}(\partial_x \psi )^2,
\end{equation*}
which can be seen as a perturbation of the equation for the travelling waves of the Gross-Pitaevskii equation, namely
\begin{equation*}
\partial_{xx} \Psi +(1-|\Psi|^2)\Psi -ic\partial_x\Psi=0.
\end{equation*}

However, the analysis of the Landau-Lifshitz equation is much more difficult. Indeed, we rely on a Hasimoto like transform in order to relate the Landau-Lifshitz equation with a nonlinear Schrödinger equation. During so, we lose some regularity. We have to deal with a nonlinear equation at the $L^2$-level and not at the $H^1$-level as in the case of the Gross-Pitaevskii equation. This leads to important technical difficulties.

Coming back to the proof of Theorem \ref{thm:stabasympt_m}, we first translate the problem into the hydrodynamical formulation. Then, we prove the asymptotic stability in that framework. In fact, we begin by refining the orbital stability. Next, we construct a limit profile, which is smooth and localized. For the proof of the exponential decay of the limit profile, we cannot rely on the Sobolev embedding $H^1$ into $L^\infty$ as it was done in \cite{BetGrSm2}. We use instead the results of Kenig, Ponce and Vega in \cite{KenPoVe9}, and the Gagliardo-Niremberg inequality (see the proof of Proposition \ref{prop:smooth} for more details). We also have to deal with the weak continuity of the flow in order to construct the limit profile. For the Gross Pitaevskii equation, this property relies on the uniqueness in a weaker space (see \cite{BetGrSm2}). There is no similar result at the $L^2$-level. Instead, we use the Kato smoothing effect. The asymptotic stability in the hydrodynamical variables then follows from a Liouville type theorem. It shows that the only smooth and localized solutions in the neighbourhood of the solitons are the solitons. Finally, we deduce the asymptotic stability in the original setting from the result in the hydrodynamical framework.

In Section 2 below, we explain the main tools and different steps for the proof. First, we introduce the hydrodynamical framework. Then, we state the orbital stability of the solitons under a new orthogonality condition. Next, we sketch the proof of the asymptotic stability for the hydrodynamical system and we state the main propositions. We finally complete the proof of Theorem \ref{thm:stabasympt_m}.

In Section 3 to 5, we give the proofs of the results stated in Section 2. In Section 3, We deal with the orbital stability in the hydrodynamical framework. In Section 4, we prove the localization and the smoothness of the limit profile. In the last section, we prove a Liouville type theorem. In a separate appendix, we show some facts used in the proofs, in particular, the weak continuity of the \eqref{HLL} flow.  
\section{Main steps for the proof of Theorem \ref{thm:stabasympt_m}}
\subsection{The hydrodynamical framework}
We introduce the map $\check{m} := m_1 + i m_2.$ Since $m_3$ belongs to $H^1(\R)$, it follows from the Sobolev embedding theorem that
$$|\check{m}(x)| = (1 - m_3^2(x))^\frac{1}{2} \to 1,$$
as $x \to \pm \infty$. As a consequence, the Landau-Lifshitz equation shares many properties with the Gross-Pitaevskii equation (see e.g. \cite{BetGrSa2}).
One of these properties is the existence of an hydrodynamical framework for the Landau-Lifshitz equation. In terms of the maps $\check{m}$ and $m_3$, this equation may be written as
$$\Bigg\{ \begin{array}{ll}
i \partial_t \check{m} - m_3 \partial_{xx} \check{m} + \check{m} \partial_{xx} m_3 - \check{m} m_3 = 0,\\[5pt]
\partial_t m_3 + \partial_x \big\langle i \check{m}, \partial_x \check{m} \big\rangle_\C = 0.
\end{array}$$
When the map $\check{m}$ does not vanish, one can write it as $\check{m} = (1 - m_3^2)^{1/2} \exp i \varphi$. The hydrodynamical variables $v := m_3$ and $w := \partial_x \varphi$ verify the following system
\renewcommand{\theequation}{HLL}
\begin{equation}
\label{HLL}
\left\{ \begin{array}{ll}
\partial_t v = \partial_x \big( (v^2 - 1) w \big),\\[5pt]
\displaystyle \partial_t w = \partial_x \Big( \frac{\partial_{xx} v}{1 - v^2} + v \frac{(\partial_x v)^2}{(1 - v^2)^2} + v \big( w^2 - 1) \Big).
\end{array} \right.
\end{equation}
This system is similar to the hydrodynamical Gross-Pitaevskii equation (see e.g. \cite{BetGrSm2}).\footnote{The hydrodynamical terminology originates in the fact that the hydrodynamical Gross-Pitaevskii equation is similar to the Euler equation for an irrotational fluid (see e.g. \cite{BetGrSm1}).} We first study the asymptotic stability in the hydrodynamical framework.

In this framework, the Landau-Lifshitz energy is expressed as
\renewcommand{\theequation}{\arabic{equation}}

\numberwithin{equation}{section}
\setcounter{equation}{0}
\begin{equation}
\label{eq:hydro-E}
E(\gv) := \int_\R e(\gv) := \frac{1}{2} \int_\R \Big( \frac{(v')^2}{1 - v^2} + \big( 1 - v^2 \big) w^2 + v^2 \Big),
\end{equation}
where $\gv := (v, w)$ denotes the hydrodynamical pair. The momentum $P$, defined by
\begin{equation}
\label{def:P}
P(\gv) := \int_\R v w,
\end{equation}
is also conserved by the Landau-Lifshitz flow. The momentum $P$ and the Landau-Lifshitz energy $E$ play an important role in the study of the asymptotic stability of the solitons.
When $c \neq 0$, the function $\check{u}_c$ does not vanish. The hydrodynamical pair $Q_c := (v_c, w_c)$ is given by
\begin{equation}
\label{form:vc}
v_c(x) = \frac{(1 - c^2)^\frac{1}{2}}{\cosh \big( (1 - c^2)^\frac{1}{2} x \big)}, \quad {\rm and} \quad w_c(x) = \frac{c \, v_c(x)}{1 - v_c(x)^2} = \frac{c (1 - c^2)^\frac{1}{2} \cosh \big( (1 - c^2)^\frac{1}{2} x \big)}{\sinh \big( (1 - c^2)^\frac{1}{2} x \big)^2 + c^2}.
\end{equation}
The only invariances of \eqref{HLL} are translations and the opposite map $(v, w) \mapsto (- v, - w)$. We restrict our attention to the translation invariances. All the analysis developed below applies when the opposite map is also taken into account. For $a \in \R$, we denote 
$$Q_{c, a}(x) := Q_c(x - a) := \big(  v_c(x - a), w_c(x - a) \big),$$
a non-constant soliton with speed $c
$. 
We also set 
$$\boN\boV(\R) := \Big\{ \gv = (v, w) \in H^1(\R) \times L^2(\R), \ {\rm s.t.} \ \max_\R |v| < 1 \Big\}.$$
This non-vanishing space is endowed in the sequel with the metric structure provided by the norm
$$\| \gv \|_{H^1 \times L^2} := \Big( \| v \|_{H^1}^2 + \| w \|_{L^2}^2 \Big)^\frac{1}{2}.$$
\subsection{Orbital stability}
A perturbation of a soliton is provided by another soliton with a slightly different speed. This property follows from the existence of a continuum of solitons with different speeds. A solution corresponding to such a perturbation at initial time diverges from the soliton due to the different speeds of propagation, so that the standard notion of stability does not apply to solitons. The notion of orbital stability is tailored to deal with such situations. The orbital stability theorem below shows that a perturbation of a soliton at initial time remains a perturbation of the soliton, up to translations, for all time.
 
 The following theorem is a variant of the result by de Laire and Gravejat \cite{DeLGr} concerning sums of solitons. It is useful for the proof of the asymptotic stability.
\begin{thm}
\label{thm:orbistab}
Let $c \in (- 1, 1) \setminus \{ 0 \}$. There exists a positive number $\alpha_c$, depending only on $c$, with the following properties. Given any $(v_0,w_0) \in X(\R):=H^1(\R) \times L^2(\R)$ such that
\begin{equation}
\label{cond:alpha}
\alpha_0 := \big\| (v_0,w_0) - Q_{c, a} \big\|_{X(\R)} \leq \alpha_c,
\end{equation}
for some $a \in \R$, there exist a unique global solution $(v,w) \in \boC^0(\R, \boN\boV(\R))$ to \eqref{HLL} with initial datum $(v_0,w_0)$, and two maps $c \in \boC^1(\R, (- 1, 1) \setminus \{ 0 \})$ and $a \in \boC^1(\R, \R)$ such that the function $\eps$ defined by
\begin{equation}
\label{def:eps}
\eps(\cdot, t) := \big( v(\cdot + a(t), t), w(\cdot + a(t), t) \big) - Q_{c(t)},
\end{equation}
satisfies the orthogonality conditions
\begin{equation}
\label{eq:ortho}
\langle \eps(\cdot, t), \partial_x Q_{c(t)} \rangle_{L^2(\R)^2} = \langle \eps(\cdot, t), \chi_{c(t)} \rangle_{L^2(\R)^2} = 0,
\end{equation}
for any $t \in \R$. Moreover, there exist two positive numbers $\sigma_c$ and $A_c$, depending only and continuously on $c$, such that
\begin{equation}
\label{eq:max-v}
\max_{x \in \R} v(x, t) \leq 1 - \sigma_c,
\end{equation}
\begin{equation}
\label{eq:modul0}
\big\| \eps(\cdot, t) \big\|_{X(\R)} + \big| c(t) - c \big| \leq A_c \alpha^0,\\
\end{equation}
and
\begin{equation}
\label{eq:modul1}
\big| c'(t) \big| + \big| a'(t) - c(t) \big| \leq A_\gc \big\| \eps(\cdot, t) \big\|_{X(\R)},
\end{equation}
for any $t\in \R$.
\end{thm}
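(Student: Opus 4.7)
My plan is to reuse the modulation–coercivity machinery underlying the orbital stability result of de Laire and Gravejat \cite{DeLGr}, adapting it to the new orthogonality direction $\chi_c$. Local well-posedness of \eqref{HLL} in $\boN\boV(\R)$ is already known from that reference, so the task reduces to constructing the modulation parameters $a(t), c(t)$ and deriving the quantitative estimates \eqref{eq:max-v}--\eqref{eq:modul1}.

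The modulation is obtained from the implicit function theorem applied to
\begin{equation*}
\Xi(\gv, a, c) := \Big( \big\langle \gv(\cdot + a) - Q_c, \partial_x Q_c \big\rangle_{L^2 \times L^2},\ \big\langle \gv(\cdot + a) - Q_c, \chi_c \big\rangle_{L^2 \times L^2} \Big),
\end{equation*}
defined on $X(\R) \times \R \times ((-1,1) \setminus \{0\})$. At the base point $(Q_c, 0, c)$ this map vanishes, and, using the evenness of $Q_c$ and $\chi_c$, its Jacobian with respect to $(a, c)$ is diagonal with determinant $-\|\partial_x Q_c\|_{L^2}^2 \, \langle \partial_c Q_c, \chi_c\rangle_{L^2 \times L^2}$. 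Provided $\chi_c$ is chosen so that this last pairing is nonzero, which is presumably the very reason for introducing $\chi_c$, the implicit function theorem produces $C^1$ maps $\gv \mapsto (a(\gv), c(\gv))$ on an $X(\R)$-neighbourhood of the orbit $\{Q_{c,a} : a \in \R\}$ satisfying \eqref{eq:ortho}. Composing with $t \mapsto (v, w)(t)$ yields the modulation functions.

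For the orbital bound \eqref{eq:modul0}, I would use the Lyapunov functional $F_c(\gv) := E(\gv) - c P(\gv)$, of which $Q_c$ is a critical point. The Hessian $H_c := D^2 F_c(Q_c)$ is a self-adjoint operator on $X(\R)$ whose null space is spanned by $\partial_x Q_c$ and which possesses exactly one strictly negative eigenvalue, with eigenspace non-orthogonal to $\partial_c Q_c$. The orthogonality conditions \eqref{eq:ortho} eliminate these two bad directions, so $H_c$ is coercive on the admissible subspace and one obtains
\begin{equation*}
F_{c(t)}(Q_{c(t)} + \eps) - F_{c(t)}(Q_{c(t)}) \geq K_c \, \|\eps\|_{X(\R)}^2 - K'_c \, \|\eps\|_{X(\R)}^3.
\end{equation*}
Combining with the conservation of $E$ and $P$, a Taylor expansion of $F_{c(t)}(Q_{c(t)})$ in $c(t)$, and a standard bootstrap then produces \eqref{eq:modul0}, and, via Sobolev embedding together with $\max v_c < 1$, the pointwise bound \eqref{eq:max-v}.

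The differential bounds \eqref{eq:modul1} follow from differentiating the two orthogonality conditions in $t$, substituting \eqref{HLL} for $\partial_t v$ and $\partial_t w$, and inverting the $2 \times 2$ linear system for $(c'(t), a'(t) - c(t))$, whose matrix is a small perturbation of the Jacobian of $\Xi$ above and whose right-hand side is of size $\|\eps(t)\|_{X(\R)}$. Estimate \eqref{eq:modul0} keeps $\gv(t)$ in the modulation neighbourhood for all time, so the construction extends globally. The main technical obstacle I anticipate is the spectral analysis for the coercivity of $H_c$ under the new condition $\langle \eps, \chi_c\rangle_{L^2 \times L^2} = 0$: once the more classical choice $\langle \eps, \partial_c Q_c\rangle = 0$ is replaced, one must verify explicitly that $\chi_c$ has nontrivial projection onto the negative eigenspace of $H_c$, so that the new orthogonality condition genuinely kills the unstable direction; this requires concrete information about both that eigenfunction and $\chi_c$ for the Landau-Lifshitz solitons.
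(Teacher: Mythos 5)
Your construction of the modulation parameters via the implicit function theorem, and your derivation of \eqref{eq:modul1} by differentiating the orthogonality conditions and inverting a near-diagonal $2\times 2$ system, coincide with what the paper does. Two remarks on that part. First, the nondegeneracy you need is not that ``$\chi_c$ has nontrivial projection onto the negative eigenspace'' --- $\chi_c$ \emph{is} by definition the normalized eigenfunction of $\boH_c$ associated with its unique negative eigenvalue, so the condition $\langle \eps, \chi_c\rangle_{L^2\times L^2}=0$ kills the unstable direction tautologically, and the coercivity of $\boH_c$ under \eqref{eq:ortho} then follows from standard Sturm--Liouville theory (this is Proposition \ref{prop:coer-single} in the appendix). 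The nondegeneracy that actually has to be checked is $\langle \chi_c, \partial_c Q_c\rangle_{L^2\times L^2}\neq 0$, i.e.\ the invertibility of the Jacobian of $\Xi$; the paper proves this by contradiction, using $\boH_c(\partial_c Q_c)=P'(Q_c)$ and the coercivity of $\boH_c$ from \cite{DeLGr}.

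The genuine gap is in your route to \eqref{eq:modul0}. You propose to rerun the Grillakis--Shatah--Strauss Lyapunov argument directly with the new orthogonality conditions \eqref{eq:ortho}. But the classical bookkeeping relies on the condition $P'(Q_{c(t)})(\eps)=0$ to control $|c(t)-c|$ through the conservation of $P$: once that condition is replaced by $\langle\eps,\chi_{c(t)}\rangle=0$, the projection of $\eps$ onto $P'(Q_{c(t)})=SQ_{c(t)}$ is only of size $O(|c(t)-c|+\alpha_0)$, so the expansion of the conserved functional $E+c(t)P$ produces a term of order $\frac{1}{2}|d''(c)|\,(c(t)-c)^2$ on the wrong side of the inequality, with a constant that there is no reason to dominate by the coercivity constant $\Lambda_c$ of Proposition \ref{prop:coer-single}; the bootstrap you sketch does not close as written. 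The paper avoids this entirely: it first invokes the orbital stability theorem already proved in \cite{DeLGr} with the \emph{classical} decomposition of \cite{GriShSt1} (Theorem \ref{thm:orbistab1}, orthogonality to $\partial_x Q_{c_1}$ and to $P'(Q_{c_1})$), where \eqref{eq:max-v}--\eqref{eq:modul1} are established, and only then re-parametrizes: the implicit function theorem yields the new pair $(a(t),c(t))$ satisfying \eqref{eq:ortho} together with the comparison estimate \eqref{est:ca}, $\|\eps(t)\|_{X}+|c(t)-c_1(t)|+|a(t)-a_1(t)|\leq \Lambda\|\eps_1(t)\|_{X}$, from which \eqref{eq:modul0} for the new decomposition is inherited by the triangle inequality. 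No new variational argument is needed. If you want to keep a direct proof, you must either add an argument showing how $|c(t)-c|$ is controlled without the momentum orthogonality, or follow the paper and transfer the bounds from the classical decomposition.
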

\begin{remark}
In this statement, the function $\chi_c$ is a normalized eigenfunction associated to the unique negative eigenvalue of the linear operator 
$$\boH_c := E''(Q_c) + c P''(Q_c).$$
The operator $\boH_c$ is self-adjoint on $L^2(\R) \times L^2(\R)$, with domain $\Dom(\boH_c) := H^2(\R) \times L^2(\R)$ (see \eqref{def:boHc} for its explicit formula). It has a unique negative simple eigenvalue $-\tilde{\lambda}_c$, and its kernel is given by
\begin{equation}
\label{eq:Ker-Hc}
\Ker(\boH_c) = \Span(\partial_x Q_c).
\end{equation}
\end{remark}

Our statement of orbital stability relies on a different decomposition from that proposed by Grillakis, Shatah and Strauss in \cite{GriShSt1}. This modification is related to the proof of asymptotic stability. A key ingredient in the proof is the coercivity of the quadratic form $G_c$, which is defined in \eqref{eq:loc-virial}, under a suitable orthogonality condition. In case we use the orthogonality conditions in \cite{GriShSt1}, the corresponding orthogonality condition for $G_c$ is provided by the function $v^{-1}_c S \partial_c Q_c$ (see \eqref{def:J} for the definition of $S$), which does not belong to $L^2(\R)$. In order to by-pass this difficulty, we use the second orthogonality condition in \eqref{eq:ortho} for which the corresponding orthogonality condition for $G_c$ is given by the function $v^{-1}_c S  \chi_c$, which does belong to $L^2(\R)$ (see the appendix for more details). This alternative decomposition is inspired from the one used by Martel and Merle in \cite{MartMer6}.

 Concerning the proof of Theorem \ref{thm:orbistab}, we first establish an orbital stability theorem with the classical decomposition of Grillakis, Shatah and Strauss \cite{GriShSt1}. This appears as a particular case of the orbital stability theorem in \cite{DeLGr} for sum of solitons. We next show that, if we have orbital stability for some decomposition and orthogonality conditions, then we also have it for different decomposition and orthogonality conditions (see Section 2 for the detailed proof of Theorem \ref{thm:orbistab}).

\subsection{Asymptotic stability for the hydrodynamical variables}
The following theorem shows the asymptotic stability result in the hydrodynamical framework.
\begin{thm}
\label{thm:stabasympt}
Let $\gc \in (- 1, 1) \setminus \{ 0 \}$. There exists a positive constant $\beta_\gc \leq \alpha_\gc$, depending only on $\gc$, with the following properties. Given any $(v_0, w_0) \in X(\R)$ such that
$$\big\| (v_0, w_0) - Q_{\gc, \ga} \big\|_{X(\R)} \leq \beta_\gc,$$
for some $\ga \in \R$, there exist a number $\gc^* \in (- 1, 1) \setminus \{ 0 \}$ and a map $b \in \boC^1(\R, \R)$ such that the unique global solution $(v, w) \in \boC^0(\R, \boN\boV(\R))$ to \eqref{HLL} with initial datum $(v_0, w_0)$ satisfies
\begin{equation}
\label{conv-stab-asymp}
\big( v(\cdot + b(t), t), w(\cdot + b(t), t) \big) \rightharpoonup Q_{\gc^*} \quad {\rm in} \ X(\R),
\end{equation}
and
$$b'(t) \to \gc^*,$$
as $t \to + \infty$.
\end{thm}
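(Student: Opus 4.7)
The plan is to follow the Martel--Merle scheme as adapted in Béthuel--Gravejat--Smets for Gross--Pitaevskii, working entirely in the hydrodynamical variables where Theorem~\ref{thm:orbistab} applies. First I would fix $(v_0,w_0)$ with $\|(v_0,w_0)-Q_{\gc,\ga}\|_{X(\R)}\le \beta_\gc$ for $\beta_\gc\le \alpha_\gc$ to be chosen, and apply Theorem~\ref{thm:orbistab} to get a global solution $\gv=(v,w)$, modulation parameters $c(t)\in(-1,1)\setminus\{0\}$ and $a(t)\in\R$, and a remainder $\eps(\cdot,t)=\gv(\cdot+a(t),t)-Q_{c(t)}$ satisfying the orthogonality conditions \eqref{eq:ortho} and the bounds \eqref{eq:modul0}--\eqref{eq:modul1}. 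I would then take any sequence $t_n\to+\infty$ and use weak compactness in $X(\R)$ (bounds from \eqref{eq:modul0}) together with compactness of $c(t)$ in the compact interval afforded by \eqref{eq:modul0} to extract a subsequence with $\eps(\cdot,t_n)\rightharpoonup \eps^*$ in $X(\R)$ and $c(t_n)\to c^*\in(-1,1)\setminus\{0\}$. Using the weak continuity of the \eqref{HLL} flow (the appendix result, based on the Kato smoothing effect), the translated solutions $\gv(\cdot+a(t_n),t+t_n)$ converge weakly in $X(\R)$ at every time $t$ to a global solution $\tilde\gv$ of \eqref{HLL} with initial datum $Q_{c^*}+\eps^*$. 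The modulation applied to $\tilde\gv$ yields a new family $\tilde\eps(\cdot,t)$, $\tilde c(t)$, $\tilde a(t)$, which inherits all the orbital stability bounds.

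The key analytic step is then to show that $\tilde\eps\equiv 0$. To this end I would derive a monotonicity formula for the momentum $P$ by testing $\partial_t(vw)$ against a cut-off of the form $\phi((x-\gamma(t))/R)$ traveling at a speed $\gamma'(t)$ chosen between the soliton speed and $1$ (the gap in the dispersion group velocity $|v_g(k)|\ge 1$ is what makes this work). Integrating this monotonicity between $-\infty$ and $+\infty$ for the limit solution $\tilde\gv$ and using the uniform orbital stability bound on $\|\tilde\eps(\cdot,t)\|_{X(\R)}$ gives that $\tilde\eps$ has suitably localized $L^2$ mass, say $\int (\tilde v^2+\tilde w^2) e^{\mu|x|}\,dx<\infty$ uniformly in time for some $\mu>0$. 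The second step, carried out in the regularity proposition \ref{prop:smooth}, promotes this $L^2$ localization to smoothness and exponential decay of $\tilde\eps$ in every Sobolev norm: one passes through a Hasimoto-type change of variable into a Schrödinger-like equation, and combines the Kenig--Ponce--Vega smoothing estimates with the Gagliardo--Nirenberg inequality to bootstrap on a localized $L^2$ bound into an $L^\infty$ decay statement.

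At this stage the limit $\tilde\gv$ is a global solution of \eqref{HLL}, smooth and exponentially localized around the soliton orbit, with $\|\tilde\eps(\cdot,t)\|_{X(\R)}\le A_\gc \alpha^0$ for every $t\in\R$. The Liouville-type theorem announced for Section~5 then shows that such a solution must satisfy $\tilde\eps\equiv 0$, i.e.\ $\tilde\gv(\cdot,t)=Q_{\tilde c^*}(\cdot-\tilde a^*(t))$ for some $\tilde c^*$ and affine $\tilde a^*$; combined with the orthogonality \eqref{eq:ortho} at $t=0$ and the $L^2$ gauge fixing, this forces $\eps^*=0$ and $\tilde c^*=c^*$. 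Since this conclusion is independent of the extracted subsequence, the full weak convergence $\eps(\cdot,t_n)\rightharpoonup 0$ in $X(\R)$ holds and the limiting speed $c^*$ is unique, giving \eqref{conv-stab-asymp} with $b:=a$. The convergence $b'(t)\to\gc^*$ is a consequence: by \eqref{eq:modul1}, $|a'(t)-c(t)|\le A_\gc\|\eps(\cdot,t)\|_{X(\R)}$, while the Liouville argument applied along every subsequence $t_n\to+\infty$ shows that $\|\eps(\cdot,t)\|_{X(\R)}\to 0$ strongly (the weak limit being zero, the localization result upgrades weak to strong on bounded sets, and exterior pieces are controlled by the uniform exponential decay) and $c(t)\to c^*$.

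The main obstacle is certainly the Liouville theorem: one needs to show that a smooth, exponentially localized solution of \eqref{HLL} staying in an $X(\R)$-neighbourhood of the soliton family is necessarily a soliton. Because the underlying nonlinear Schrödinger equation obtained through the Hasimoto transform sits at the $L^2$ level rather than at $H^1$ (in contrast with the Gross--Pitaevskii case), the coercivity estimates and the spectral analysis of $\boH_c$ under the orthogonality \eqref{eq:ortho} must be combined much more carefully with a virial-type identity for the linearized flow in order to extract the rigidity; the secondary difficulty is that the monotonicity of momentum alone (no monotonicity of the energy being available, as the authors emphasise) must suffice to establish the localization used as input.
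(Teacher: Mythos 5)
Your overall architecture coincides with the paper's: orbital stability with the modified orthogonality conditions, extraction of a weak limit profile along a sequence $t_n\to+\infty$, weak continuity of the \eqref{HLL} flow, localization of the limit profile by the momentum monotonicity formula, smoothing via the map $\Psi$ together with the Kenig--Ponce--Vega estimates and the Gagliardo--Nirenberg inequality, and finally the Liouville theorem yielding $\eps_0^*\equiv 0$. Up to that point the proposal follows essentially the same route as the paper.

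The genuine problem is your last step. You claim that $\|\eps(\cdot,t)\|_{X(\R)}\to 0$ \emph{strongly} as $t\to+\infty$, arguing that ``the localization result upgrades weak to strong on bounded sets, and exterior pieces are controlled by the uniform exponential decay.'' This is incorrect: the localization and exponential decay established in Propositions \ref{prop:local} and \ref{prop:smooth} concern only the limit profile $(v^*,w^*)$ (which turns out to be exactly a soliton), not the actual solution $(v,w)$. The dispersive radiation carried by $(v,w)$ retains an $L^2$ mass that does not vanish in time, which is precisely why the theorem asserts only weak convergence (see Remark $(ii)$ after Theorem \ref{thm:stabasympt_m}); if your claim were true, the conclusion \eqref{conv-stab-asymp} would be a strong convergence. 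As a consequence you cannot obtain $b'(t)\to\gc^*$ from the crude bound $|a'(t)-c(t)|\le A_\gc\|\eps(\cdot,t)\|_{X(\R)}$ of \eqref{eq:modul1}. The correct route, as in \cite{BetGrSm2}, goes back to the modulation system \eqref{grece}: the quantities $Y$ and $Z$ are pairings of $\eps$ (and of expressions at least quadratic in $\eps$) against exponentially localized functions built from $Q_{c,a}$ and its derivatives, and such pairings do tend to $0$ once $\eps(\cdot,t)\rightharpoonup 0$ in $X(\R)$, by local compactness; this gives $c'(t)\to 0$ and $a'(t)-c(t)\to 0$, hence $b'(t)\to\gc^*$, provided one has also shown that the limit speed $\gc^*$ is independent of the extracted subsequence --- a point you assert but do not justify, and which requires the additional argument of Subsection 1.3.4 of \cite{BetGrSm2}.
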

Theorem \ref{thm:stabasympt} establishes a convergence to some orbit of the soliton. This result is stronger than the one given by Theorem \ref{thm:orbistab} which only shows that the solution stays close to that orbit.

In the next subsections, we explain the main ideas of the proof, which follows the strategy developed by Martel and Merle for the Korteweg-de Vries equation \cite{MartMer6, MartMer5}.
\subsubsection{Construction of a limit profile}
Let $\gc \in (- 1, 1) \setminus \{ 0 \}$, and $(v_0, w_0) \in X(\R)$ be any pair satisfying the assumptions of Theorem \ref{thm:stabasympt}. Since $\beta_\gc \leq \alpha_\gc$ in the assumptions of Theorem \ref{thm:stabasympt}, we deduce from Theorem \ref{thm:orbistab} that the unique solution $(v, w)$ to \eqref{HLL} with initial datum $(v_0, w_0)$ is global.

We take an arbitrary sequence of times $(t_n)_{n \in \N}$ tending to $+ \infty$. In view of \eqref{eq:modul0} and \eqref{eq:modul1}, we may assume, up to a subsequence, that 
there exist a limit perturbation $\eps_0^* \in X(\R)$ and a limit speed $c_0^* \in [- 1, 1]$ such that
\begin{equation}
\label{eq:assump1}
\eps(\cdot, t_n) = \big( v(\cdot + a(t_n), t_n), w(\cdot + a(t_n), t_n) \big) - Q_{c(t_n)} \rightharpoonup \eps_0^* \quad {\rm in} \ X(\R),
\end{equation}
and
\begin{equation}
\label{eq:assump2}
c(t_n) \to c_0^*,
\end{equation}
as $n \to + \infty$. Our main goal is to show that
$$\eps_0^* \equiv 0,$$
(see Corollary \ref{cor:cayest}). For that, we establish smoothness and rigidity properties for the solution of \eqref{HLL} with the initial datum $Q_{c_0^*} + \eps_0^*$.

First, we impose the constant $\beta_\gc$ to be sufficiently small so that, when the number $\alpha^0$ which appears in Theorem \ref{thm:orbistab} satisfies $\alpha^0 \leq \beta_\gc$, then we infer from \eqref{eq:modul0} and \eqref{eq:modul1} that
\begin{equation}
\label{eq:lvmh1}
\min \big\{ c(t)^2, a'(t)^2 \big\} \geq \frac{\gc^2}{2}, \qquad \max \big\{ c(t)^2, a'(t)^2 \big\} \leq 1 + \frac{\gc^2}{2}, 
\end{equation}
and
\begin{equation}
\label{eq:lvmh2}
\big\| v_\gc(\cdot) - v(\cdot + a(t), t) \big\|_{L^\infty(\R)} \leq \min \Big\{ \frac{\gc^2}{4}, \frac{1 - \gc^2}{16} \Big\},
\end{equation}
for any $t\in \R$. This yields, in particular, that $c_0^* \in (- 1, 1) \setminus \{ 0 \}$, and then, that $Q_{c_0^*}$ is well-defined and different from the black soliton.

By \eqref{eq:modul0}, we also have 
\begin{equation}
\label{eq:encorezero}
\big| c_0^* - \gc \big| \leq A_\gc \beta_\gc,
\end{equation}
and, applying again \eqref{eq:modul0}, as well as \eqref{eq:assump1}, and the weak lower semi-continuity of the norm, we also know that the function
$$(v_0^*, w_0^*) := Q_{c_0^*} + \eps_0^*,$$
satisfies
\begin{equation}
\label{eq:encoreune}
\big\| (v_0^*, w_0^*) - Q_\gc \big\|_{X(\R)} \leq A_\gc \beta_\gc + \big\| Q_\gc - Q_{c_0^*} \big\|_{X(\R)}.
\end{equation}
We next impose a supplementary smallness assumption on $\beta_\gc$ so that 
\begin{equation}
\label{cond:alpha*}
\big\| (v_0^*, w_0^*) - Q_\gc \big\|_{X(\R)} \leq \alpha_\gc.
\end{equation}
By Theorem \ref{thm:orbistab}, there exists a unique global solution $(v^*, w^*) \in
\boC^0(\R, \boN\boV(\R))$ to \eqref{HLL} with initial datum $(v_0^*, w_0^*)$, and two maps $c^* \in \boC^1(\R, (- 1, 1) \setminus \{ 0 \})$ and $a^* \in \boC^1(\R, \R)$ such that the function $\eps^*$ defined by
\begin{equation}
\label{def:eps*}
\eps^*(\cdot, t) := \big( v^*(\cdot + a^*(t), t), w(\cdot + a^*(t), t) \big) - Q_{c^*(t)},
\end{equation}
satisfies the orthogonality conditions
\begin{equation}
\label{eq:orthobis}
\langle \eps^*(\cdot, t), \partial_x Q_{c^*(t)} \rangle_{L^2(\R)^2} = \langle \eps^*(\cdot, t), \chi_{c^*(t)} \rangle_{L^2(\R)^2} = 0,
\end{equation}
as well as the estimates
\begin{equation}
\label{eq:modul0bis}
\big\| \eps^*(\cdot, t) \big\|_{X(\R)} + \big| c^*(t) - \gc \big|+ \big| a{^*}'(t) - c^*(t) \big| \leq A_\gc \big\| (v_0^*, w_0^*) -Q_{\gc} \big\|_{X(\R)},
\end{equation}
for any $t\in \R$.

We may take $\beta_\gc$ small enough such that, combining \eqref{eq:encorezero} with \eqref{eq:encoreune} and \eqref{eq:modul0bis}, we obtain
\begin{equation}
\label{eq:lvmh1bis}
\min \big\{ c^*(t)^2, (a^*)'(t)^2 \big\} \geq \frac{\gc^2}{2}, \qquad \max \big\{ c^*(t)^2, (a^*)'(t)^2 \big\} \leq 1 + \frac{\gc^2}{2}, 
\end{equation}
and 
\begin{equation}
\label{eq:lvmh2bis}
\big\| v_\gc(\cdot) - v^*(\cdot + a^*(t), t) \big\|_{L^\infty(\R)} \leq \min \Big\{ \frac{\gc^2}{4}, \frac{1 - \gc^2}{16} \Big\},
\end{equation}
for any $t\in \R$.

Finally, we use the weak continuity of the flow map for the Landau-Lifshitz equation. The proof relies on Proposition \ref{prop:w-cont-Q} and follows the lines the proof of Proposition 1 in \cite{BetGrSm2}.

\begin{prop}
\label{prop:reprod}
Let $t \in \R$ be fixed. Then,
\begin{equation}
\label{sologne1}
\big( v(\cdot + a(t_n), t_n + t), w(\cdot + a(t_n), t_n + t) \big) \rightharpoonup \big( v^*(\cdot, t), w^*(\cdot, t) \big) \quad {\rm in} \ X(\R),
\end{equation}
while
\begin{equation}
\label{sologne2}
a(t_n + t) - a(t_n) \to a^*(t), \quad {\rm and} \quad c(t_n + t) \to c^*(t),
\end{equation}
as $n \to + \infty$. In particular, we have
\begin{equation}
\label{sologne3}
\eps(\cdot, t_n + t) \rightharpoonup \eps^*(\cdot, t) \quad {\rm in} \ X(\R),
\end{equation}
as $n \to + \infty$.
\end{prop}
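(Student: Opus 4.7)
The plan is to view this proposition as a weak-continuity-of-flow statement followed by a modulation-identification step. Setting $(v_n, w_n)(\cdot, s) := \big( v(\cdot + a(t_n), t_n + s), w(\cdot + a(t_n), t_n + s) \big)$, the translation invariance of \eqref{HLL} guarantees that each $(v_n, w_n)$ is a global solution. At $s = 0$, the hypothesis \eqref{eq:assump1}, the convergence $c(t_n) \to c_0^*$ from \eqref{eq:assump2}, and the smooth dependence of $Q_c$ on $c$ combine to give the weak convergence of the initial data $(v_n(\cdot, 0), w_n(\cdot, 0)) \rightharpoonup (v_0^*, w_0^*)$ in $X(\R)$. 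Invoking the weak continuity of the \eqref{HLL} flow, namely Proposition \ref{prop:w-cont-Q}, at the fixed time $s = t$ then yields \eqref{sologne1}.

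Next I would extract compactness for the modulation parameters. The bounds \eqref{eq:modul1} and \eqref{eq:lvmh1} give uniform-in-$s$ control of $|a'(s)|$ and $|c(s)|$, so that for $t$ fixed both $a(t_n + t) - a(t_n) = \int_0^t a'(t_n + s)\,ds$ and $c(t_n + t)$ are bounded sequences, the latter staying away from zero thanks to \eqref{eq:lvmh1}. Up to passing to a subsequence, assume that $a(t_n + t) - a(t_n) \to A$ and $c(t_n + t) \to C$ for some $A \in \R$ and $C \in (-1, 1) \setminus \{ 0 \}$.

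To identify $A = a^*(t)$ and $C = c^*(t)$, I would rewrite the orbital-stability decomposition at time $t_n + t$ as
\begin{equation*}
\eps(\cdot, t_n + t) = \big( v_n(\cdot + (a(t_n + t) - a(t_n)), t), w_n(\cdot + (a(t_n + t) - a(t_n)), t) \big) - Q_{c(t_n + t)}.
\end{equation*}
Combining \eqref{sologne1} with the convergences $a(t_n + t) - a(t_n) \to A$ and $Q_{c(t_n + t)} \to Q_C$ in $X(\R)$, I pass to the weak limit and obtain
\begin{equation*}
\eps(\cdot, t_n + t) \rightharpoonup \big( v^*(\cdot + A, t), w^*(\cdot + A, t) \big) - Q_C \quad {\rm in} \ X(\R).
\end{equation*}
Since each $\eps(\cdot, t_n + t)$ satisfies the orthogonality conditions \eqref{eq:ortho} against $\partial_x Q_{c(t_n + t)}$ and $\chi_{c(t_n + t)}$, and these test functions converge strongly in $L^2(\R) \times L^2(\R)$ as $c(t_n + t) \to C$, the weak limit above is orthogonal to $\partial_x Q_C$ and $\chi_C$. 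The local uniqueness of the modulation decomposition near a soliton, i.e.\ the implicit-function-theorem step underlying Theorem \ref{thm:orbistab}, applies in the small-perturbation regime guaranteed by \eqref{eq:modul0bis} and \eqref{eq:lvmh2bis}, and therefore forces $(A, C) = (a^*(t), c^*(t))$, since $\eps^*(\cdot, t)$ is the unique orthogonal perturbation associated with $(v^*(\cdot, t), w^*(\cdot, t))$ in the sense of \eqref{eq:orthobis}. As every subsequence admits a further subsequence converging to the same limits, the full sequence converges, which proves \eqref{sologne2}. The last convergence \eqref{sologne3} then follows by linear combination of \eqref{sologne1} with the strong convergence $Q_{c(t_n + t)} \to Q_{c^*(t)}$ in $X(\R)$.

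The main obstacle is Proposition \ref{prop:w-cont-Q} itself, on which I would rely as a black box here. As explained in the introduction, weak continuity for the Landau-Lifshitz equation cannot be obtained by a Gross-Pitaevskii-style uniqueness argument in a weaker norm, since one is forced to work at the $L^2$ level after the Hasimoto-type transform, and must instead be extracted from the Kato smoothing effect. Granting that result, the remainder of the argument is a clean two-ingredient scheme: weak convergence transported through the flow, and uniqueness of the orthogonal decomposition to pin down the modulation parameters.
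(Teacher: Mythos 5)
Your proposal is correct and follows essentially the same route as the paper, which simply states that the proof relies on Proposition \ref{prop:w-cont-Q} and follows the lines of Proposition 1 in \cite{BetGrSm2}: weak continuity of the flow applied to the translated solutions, then identification of the modulation parameters through compactness and the uniqueness of the orthogonal decomposition. The only detail worth making explicit is that the hypothesis \eqref{sup_n-sup_t:vn} of Proposition \ref{prop:w-cont-Q} is supplied uniformly by \eqref{eq:max-v}, which you use implicitly when invoking that proposition as a black box.
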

\subsubsection{Localization and smoothness of the limit profile}

Our proof of the localization of the limit profile is based on a monotonicity formula.

Consider a pair $(v, w)$ which satisfies the conclusions of Theorem \ref{thm:orbistab} and suppose that \eqref{eq:lvmh1} and \eqref{eq:lvmh2} are true. Let $R$ and $t$ be two real numbers, and set
$$I_R(t) \equiv I_R^{(v, w)}(t) := \frac{1}{2} \int_\R \big[ v w \big](x + a(t), t) \Phi(x - R) \, dx,$$
where $\Phi$ is the function defined on $\R$ by 
\begin{equation}
\label{eq:defiphi}
\Phi(x) := \frac{1}{2} \Big( 1 + \th \big( \nu_\gc x \big) \Big),
\end{equation}
with $\nu_\gc := \sqrt{1 - \gc^2}/8$. We have

\begin{prop}
\label{prop:mono}
Let $R \in \R$, $t \in \R$, and $\sigma \in [- \sigma_\gc, \sigma_\gc]$, with 
$\sigma_\gc := \sqrt{1 - \gc^2}/4$. Under the above assumptions, there exists a positive number $B_\gc$, depending only on $\gc$, such that
\begin{equation}
\label{eq:mono}
\begin{split}
\frac{d}{dt} \big[ I_{R + \sigma t}(t) \big] \geq & \frac{1 - \gc^2}{8} \int_\R \big[ (\partial_x v )^2 + v^2 + w^2 \big](x + a(t), t) \Phi'(x - R - \sigma t) \, dx\\
& - B_\gc e^{- 2 \nu_\gc |R + \sigma t|}.
\end{split}
\end{equation}
In particular, we have
\begin{equation}
\label{eq:monobis}	
I_R(t_1) \geq I_R(t_0) - B_c e^{- 2 \nu_\gc |R|},
\end{equation}
for any real numbers $t_0 \leq t_1$.
\end{prop}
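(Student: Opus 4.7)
The plan is to differentiate $I_{R+\sigma t}(t)$ directly using \eqref{HLL}, convert the time derivative into an integral of a quadratic expression against $\Phi'(\cdot - R - \sigma t)$ via a local conservation law for the momentum density $vw$, and extract a coercive lower bound by decomposing $(v,w)$ through the orbital framework of Theorem \ref{thm:orbistab}.

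First, computing $v\,\partial_t w + w\,\partial_t v$ from \eqref{HLL} and gathering total derivatives gives the pointwise conservation law
\begin{equation*}
\partial_t(vw) = \partial_x F, \qquad F = \frac{v v_{xx}}{1-v^2} + \frac{(3v^2-1) v_x^2}{2(1-v^2)^2} + \frac{(3v^2-1) w^2}{2} - \frac{v^2}{2}.
\end{equation*}
Differentiating $I_{R+\sigma t}(t)$, substituting this identity, changing variables $y = x + a(t)$ and integrating by parts once yields
\begin{equation*}
\frac{d}{dt} I_{R+\sigma t}(t) = -\frac{1}{2} \int_\R \bigl[F + (a'(t)+\sigma)\, v w\bigr](x + a(t), t)\,\Phi'(x - R - \sigma t)\,dx.
\end{equation*}
I then eliminate $v_{xx}$ via $v v_{xx}/(1-v^2) = \partial_x(v v_x/(1-v^2)) - (1+v^2) v_x^2/(1-v^2)^2$ and one further integration by parts, producing a positive leading coefficient $(3-v^2)/[4(1-v^2)^2] \geq \tfrac{1}{2}$ in front of $v_x^2\,\Phi'$ together with boundary pieces weighted by $\Phi''$ and $\Phi'''$. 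Since $\Phi(x) = \tfrac12(1+\tanh(\nu_\gc x))$ with $\nu_\gc^2 = (1-\gc^2)/64$, direct differentiation gives $|\Phi''| \leq C\nu_\gc\,\Phi'$ and $|\Phi'''| \leq C\nu_\gc^2\,\Phi'$, so these boundary pieces carry a small prefactor of order $1-\gc^2$; estimate \eqref{eq:lvmh2} further keeps $(1-v^2)^{-1}$ bounded by a constant depending only on $\gc$.

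I next decompose $(v, w)(\cdot + a(t), t) = Q_{c(t)} + \eps(\cdot, t)$ via Theorem \ref{thm:orbistab} and expand the integrand in powers of $\eps$. The soliton profile decays as $e^{-\sqrt{1-c(t)^2}|x|}$ with $\sqrt{1-c(t)^2} \geq \sqrt{(1-\gc^2)/2} > 4\nu_\gc$ by \eqref{eq:lvmh1}, so every term carrying at least one factor of $v_{c(t)}$, $w_{c(t)}$ or $\partial_x v_{c(t)}$ integrates against $\Phi'(\cdot - R - \sigma t)$ to at most $B_\gc\,e^{-2\nu_\gc|R+\sigma t|}$. The remaining purely $\eps$-quadratic part is, to leading order, $\tfrac34(\eps_1')^2 + \tfrac14 \eps_1^2 + \tfrac14 \eps_2^2 - \tfrac12(a'(t)+\sigma)\eps_1\eps_2$ weighted by $\Phi'$. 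After shrinking $\alpha_\gc$ in Theorem \ref{thm:orbistab}, \eqref{eq:modul1} together with $|\sigma|\leq \sqrt{1-\gc^2}/4$ controls $a'(t)+\sigma$; combining Young's inequality on the cross term with the $v_x^2$-coercivity shows that this quadratic form is bounded below by $\tfrac{1-\gc^2}{4}(\eps_1^2 + \eps_2^2 + (\eps_1')^2)\,\Phi'$. Undoing $\eps = (v,w) - Q_{c(t)}$ and absorbing the mixed $v\cdot v_c$ and $v_c^2$ contributions into the exponential remainder gives \eqref{eq:mono} with factor $\tfrac{1-\gc^2}{8}$. The integrated inequality \eqref{eq:monobis} then follows at once by integrating \eqref{eq:mono} with $\sigma=0$ from $t_0$ to $t_1$ and discarding the nonnegative term.

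The main obstacle is the pointwise coercivity step with the explicit constant $\tfrac{1-\gc^2}{8}$: one must simultaneously track the cubic-and-higher residues of $F/(1-v^2)^2$, the convective cross term $(a'+\sigma)vw$, and the $\Phi''$, $\Phi'''$ corrections, while the effective advection speed $a'(t)+\sigma$ is allowed to approach $1$. The precise tuning $\nu_\gc = \sqrt{1-\gc^2}/8$ and $\sigma_\gc = \sqrt{1-\gc^2}/4$ is exactly what is needed so that the boundary and cross-term losses do not consume the gap to the group-velocity threshold $|v_g|\geq 1$ of \eqref{HLL}.
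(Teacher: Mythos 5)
Your overall strategy (local conservation law for $vw$, testing against $\Phi'(\cdot-R-\sigma t)$, then a quadratic lower bound away from the soliton core) is the paper's, but two steps do not close as written. The clearest gap is your deduction of \eqref{eq:monobis}: integrating \eqref{eq:mono} with $\sigma=0$ from $t_0$ to $t_1$ yields the error $-B_\gc\,(t_1-t_0)\,e^{-2\nu_\gc|R|}$, which grows linearly in the length of the time interval, whereas \eqref{eq:monobis} requires a constant independent of $t_1-t_0$. This is exactly why the proposition carries the parameter $\sigma$: the paper integrates \eqref{eq:mono} with $\sigma=\sigma_\gc$ on $[t_0,(t_0+t_1)/2]$ and $\sigma=-\sigma_\gc$ on $[(t_0+t_1)/2,t_1]$ (with suitably shifted $R$), so that the error $e^{-2\nu_\gc|R+\sigma t|}$ decays exponentially in time away from the endpoints and its time integral is bounded uniformly. ``Follows at once with $\sigma=0$'' is therefore a genuine gap, not a formality.

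The second problem is the coercivity step. With the coefficients you state, the $(v,w)$-part of the form is $\tfrac14 v^2+\tfrac14 w^2-\tfrac12(a'(t)+\sigma)vw$ (the same whether written in $(v,w)$ or in $\eps$); this is positive definite only when $|a'(t)+\sigma|<1$, and is bounded below by $\tfrac{1-\gc^2}{4}(v^2+w^2)$ only when $|a'(t)+\sigma|\le \gc^2$. Neither is guaranteed: Theorem \ref{thm:orbistab} only gives $a'(t)$ close to $\gc$, and $|\gc|+\sigma_\gc=|\gc|+\tfrac{\sqrt{1-\gc^2}}{4}$ exceeds $1$ for $|\gc|>15/17$ (its maximum over $\gc$ is $\sqrt{17}/4$), so at the edge of the admissible range of $\sigma$ your quadratic form is pointwise indefinite, and the $(\partial_x v)^2$ term cannot repair an indefiniteness that occurs for slowly varying $(v,w)$. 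The identity \eqref{consint} used in the paper (quoted from Lemma 3.1 of \cite{DeLGr}) has the cross term $-(a'+\sigma)vw$ against diagonal terms $v^2+w^2$, i.e.\ a cross-to-diagonal ratio $|a'+\sigma|/2\le\sqrt{(9+7\gc^2)/32}<1$ by \eqref{eq:dior3}, and only then does Young's inequality leave the margin needed for the constant $\tfrac{1-\gc^2}{8}$ after absorbing the $\ln(1-v^2)\Phi'''$ and $v^2w^2$ corrections; your bookkeeping of the relative weight between the transport term $(a'+\sigma)vw$ and the flux must be reconciled with that identity before the constant can be extracted. Finally, note that the paper never expands around $Q_{c(t)}$ here: it splits $\R$ into $\{|x|\ge R_0\}$, where \eqref{eq:lvmh2} and the soliton decay give $|v|\le 1/12$ pointwise so the non-quadratic terms are absorbed, and $[-R_0,R_0]$, where the entire integrand is discarded into the error via $\Phi'(x-R-\sigma t)\le 2\nu_\gc e^{2\nu_\gc R_0}e^{-2\nu_\gc|R+\sigma t|}$; this avoids the many mixed $Q_c$--$\eps$ terms your decomposition generates.
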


For the limit profile $(v^*, w^*)$, we set $I_R^*(t) := I_R^{(v^*, w^*)}(t)$ for any $R \in \R$ and any $t \in \R$. We claim

\begin{prop}[\cite{BetGrSm2}]
\label{prop:local0}
Given any positive number $\delta$, there exists a positive number $R_\delta$, depending only on $\delta$, such that we have
\begin{align*}
\big| I_R^*(t) \big| \leq \delta, & \quad \forall R \geq R_\delta,\\
\big| I_R^*(t) - P(v^*, w^*) \big| \leq \delta, & \quad \forall R \leq - R_\delta,
\end{align*}
for any $t \in \R$.
\end{prop}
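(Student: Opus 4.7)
My plan follows the approach of \cite{BetGrSm2}. The bounds \eqref{eq:lvmh1bis}--\eqref{eq:lvmh2bis} ensure that Proposition \ref{prop:mono} applies to $(v^*, w^*)$ with constants depending only on $\gc$. I write $I_R^*(t) := I_R^{(v^*, w^*)}(t)$ for brevity.

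The first step is to collect pointwise asymptotics. The orbital stability bound \eqref{eq:modul0bis} gives that $\|v^*(\cdot, t)\|_{L^2}+\|w^*(\cdot, t)\|_{L^2}$ is uniformly bounded in $t$ by a constant $C_\gc$, so by Cauchy--Schwarz $|I_R^*(t)| \leq C_\gc$ uniformly in $R,t$. For each fixed $t$, dominated convergence applied to $v^* w^*(\cdot + a^*(t), t) \in L^1(\R)$ yields
\begin{equation*}
\lim_{R \to +\infty} I_R^*(t) = 0, \qquad \lim_{R \to -\infty} I_R^*(t) = \tfrac{1}{2} P(v^*, w^*),
\end{equation*}
the second limit using conservation of $P$ by the \eqref{HLL} flow.

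The second step is to integrate the drift form of \eqref{eq:mono}. For $\sigma \in \{-\sigma_\gc, \sigma_\gc\}$ and $t_0 \leq t_1$, integration yields
\begin{equation*}
I_{R + \sigma(t_1 - t_0)}^*(t_1) \geq I_R^*(t_0) - \frac{B_\gc}{\nu_\gc \sigma_\gc}\bigl(e^{-2\nu_\gc|R|} + e^{-2\nu_\gc|R + \sigma(t_1 - t_0)|}\bigr),
\end{equation*}
where the key uniform control on the time integral of the exponential remainder stems from $\sigma \neq 0$. Given $\delta > 0$, the pointwise asymptotics at $t = 0$ furnish $R_\delta$ with $|I_R^*(0)| \leq \delta/4$ for $R \geq R_\delta$ and $|I_R^*(0) - \tfrac{1}{2} P(v^*, w^*)| \leq \delta/4$ for $R \leq -R_\delta$. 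Propagation to $t \leq 0$ is then immediate: for $R \geq R_\delta$, applying the above inequality with $\sigma = +\sigma_\gc$ on $[t,0]$ gives $I_R^*(t) \leq I_{R + \sigma_\gc|t|}^*(0) + C_\gc' e^{-2\nu_\gc R}$, which is at most $\delta$ for $R$ large enough, uniformly in $t \leq 0$, since $R + \sigma_\gc|t| \geq R \geq R_\delta$.

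The main obstacle is the one-sided nature of Proposition \ref{prop:mono}, which at positive times delivers only a \emph{lower} bound on $I_R^*(t)$; a separate mechanism is required to propagate the upper bound forwards in time. Following \cite{BetGrSm2}, I would work simultaneously with the complementary quantity
\begin{equation*}
\tilde I_R^*(t) := \tfrac{1}{2} P(v^*, w^*) - I_R^*(t) = \tfrac{1}{2}\int_\R v^* w^*(x + a^*(t), t)\bigl(1 - \Phi(x - R)\bigr) dx,
\end{equation*}
which by conservation of $P$ and the monotonicity of $I_R^*$ is almost non-increasing along the characteristics $R + \sigma t$. Upper bounds on $I_R^*$ are thus equivalent to lower bounds on $\tilde I_R^*$, and the dual monotonicity structure of $\tilde I_R^*$, combined with its own pointwise asymptotics $\tilde I_R^*(t) \to \tfrac{1}{2}P(v^*, w^*)$ as $R \to +\infty$ and $\tilde I_R^*(t) \to 0$ as $R \to -\infty$, allows one to close the forward-in-time upper bound by a symmetric argument. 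Carefully interleaving the two drift directions and the two dual quantities delivers the required uniform estimates at both ends $R \geq R_\delta$ and $R \leq -R_\delta$. The fact that the drift speed $\sigma_\gc = \tfrac{1}{4}\sqrt{1-\gc^2}$ is strictly below the dispersive threshold $|v_g(k)| \geq 1$ is essential for the scheme to capture localization rather than merely record dispersive transport.
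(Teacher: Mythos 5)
There is a genuine gap at exactly the point you flag as ``the main obstacle'', and the mechanism you propose to overcome it cannot work. The complementary quantity $\tilde I_R^*(t) = \tfrac12 P(v^*,w^*) - I_R^*(t)$ carries precisely the same information as $I_R^*(t)$: the statement that $\tilde I_R^*$ is almost non-increasing along the characteristics is a verbatim restatement of the almost-monotonicity of $I_R^*$, so, integrated forward in time, it again produces only upper bounds on $\tilde I_R^*$, i.e.\ lower bounds on $I_R^*$. There is no ``dual monotonicity structure'' pointing the other way, and the symmetric argument you invoke is circular; the same problem occurs, unaddressed, for the lower bound on $I_R^*(t)$ at $t\le 0$. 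The gap is not technical: for a generic solution in the $\beta_\gc$-neighbourhood of $Q_\gc$ the conclusion is simply false for small $\delta$ (put a small wave packet far to the left of the soliton at $t=0$; since the group velocities satisfy $|v_g|\ge 1>|a'(t)|$ it eventually deposits momentum of order $\beta_\gc^2$ to the right of $a(t)+R$ for every fixed $R$). Hence no argument that treats $(v^*,w^*)$ as an arbitrary solution to which Proposition \ref{prop:mono} applies can succeed.

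The missing ingredient --- the heart of the proof of Proposition 3 in \cite{BetGrSm2} --- is that $(v^*,w^*)$ is the weak limit of the \emph{original} solution along the sequence $t_n\to+\infty$. For the original solution, $I_R(s)$ is bounded uniformly in $s$ and almost non-decreasing by \eqref{eq:monobis}; since $I_R(s')\ge I_R(s)-B_\gc e^{-2\nu_\gc|R|}$ for all $s'\ge s$, one gets $\limsup_{s\to+\infty}I_R(s)-\liminf_{s\to+\infty}I_R(s)\le B_\gc e^{-2\nu_\gc|R|}$. By Proposition \ref{prop:reprod}, $I_R^*(t)=\lim_n I_R(t_n+t)$ for each fixed $t$ (a passage to the limit which itself requires care, the integrand being quadratic in weakly convergent quantities), and $t_n+t\to+\infty$, so every value $I_R^*(t)$, $t\in\R$, lies in the interval $[\liminf_{s\to\infty}I_R(s),\limsup_{s\to\infty}I_R(s)]$ of length at most $B_\gc e^{-2\nu_\gc|R|}$. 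This yields the two-sided, time-uniform estimate $|I_R^*(t)-I_R^*(0)|\le B_\gc e^{-2\nu_\gc|R|}$, which combined with your (correct) first step --- dominated convergence for $I_R^*(0)$ as $R\to\pm\infty$ --- gives the proposition. Your first step and your integrated form of \eqref{eq:mono} are fine, but the drift $\sigma$ and the characteristics $R+\sigma t$ are not what is needed here; they enter in the proof of Proposition \ref{prop:local}, not of Proposition \ref{prop:local0}.
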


The proof of Proposition \ref{prop:local0} is the same as the one of Proposition 3 in \cite{BetGrSm2}.

From Propositions \ref{prop:mono} and \ref{prop:local0}, we derive as in \cite{BetGrSm2} that
\begin{prop}[\cite{BetGrSm2}]
\label{prop:local}
Let $t\in \R$. There exists a positive constant $\boA_\gc$ such that
$$\int_t^{t + 1} \int_\R \big[ (\partial_x v^*)^2 + (v^*)^2 + (w^*)^2 \big](x + a^*(s), s) e^{2 \nu_\gc |x|} \, dx \, ds \leq \boA_\gc.$$
\end{prop}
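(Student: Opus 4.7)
The plan is to follow the scheme used to prove the analogous statement for the Gross--Pitaevskii equation (Proposition~5 of \cite{BetGrSm2}). The idea is to combine Proposition~\ref{prop:mono} and Proposition~\ref{prop:local0} to first upgrade the asymptotic convergences of $R \mapsto I_R^*(t)$ at $\pm \infty$ into exponential decay estimates, and then to convert these into the desired exponentially weighted bound on the quadratic density.

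The first step is to show that, uniformly in $t \in \R$,
\[ I^*_R(t) \leq C_\gc\, e^{-2\nu_\gc R} \quad \text{for } R \geq 0, \qquad \bigl| I^*_R(t) - P(v^*, w^*)/2 \bigr| \leq C_\gc\, e^{2\nu_\gc R} \quad \text{for } R \leq 0. \]
For the first estimate, I would iterate the monotonicity formula \eqref{eq:mono} with $\sigma = \sigma_\gc$ forward in time: dropping the non-negative integral term and summing the geometric series $\sum_{k \geq 0} e^{-2\nu_\gc(R + k\sigma_\gc)} \leq C_\gc\, e^{-2\nu_\gc R}$ gives
\[ I^*_{R + N \sigma_\gc}(t + N) \geq I^*_R(t) - C_\gc\, e^{-2\nu_\gc R}, \qquad N \in \N. \]
Letting $N \to +\infty$ and invoking Proposition~\ref{prop:local0} to force the left-hand side to $0$ yields the claimed upper bound. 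The matching lower bound is obtained by running the same argument on the time-reversed solution $(x, t) \mapsto (v^*(x, -t), -w^*(x, -t))$, which is again a solution of \eqref{HLL}. The second estimate (on the negative half-line) is obtained symmetrically using $\sigma = -\sigma_\gc$.

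The second step is to combine these two-sided exponential bounds with the monotonicity formula used at $\sigma = 0$, integrated over $[t, t+1]$. Since the differences $I^*_R(t+1) - I^*_R(t)$ are now exponentially small in $|R|$, one obtains
\[ \int_t^{t+1} \int_\R \bigl[ (\partial_x v^*)^2 + (v^*)^2 + (w^*)^2 \bigr](x + a^*(s), s)\, \Phi'(x - R)\, dx\, ds \leq C_\gc\, e^{-2\nu_\gc |R|}, \]
uniformly in $R \in \R$.

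The third step, which is the main technical obstacle, is to promote this localized estimate into the exponentially weighted bound of the proposition. Because the decay rate $2\nu_\gc$ of the localized estimate exactly matches the exponent $2\nu_\gc$ of the weight $e^{2\nu_\gc |x|}$, a naive partition of $\R$ into unit intervals and summation would fail to converge. Following \cite{BetGrSm2}, one has to exploit the full strength of the moving-frame monotonicity, namely Proposition~\ref{prop:mono} with $\sigma = \pm \sigma_\gc$, together with the time averaging over $[t, t+1]$, in order to reconstruct the weight $e^{2\nu_\gc |x|}$ from the shifted $\Phi'$-estimates through a Young-type summation. Once this reconstruction step is carried out, the conclusion of the proposition follows.
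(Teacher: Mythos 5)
The paper gives no proof of this proposition (it defers entirely to \cite{BetGrSm2}), so the comparison is with the scheme of that reference, and your first two steps reproduce it correctly. Upgrading Proposition~\ref{prop:local0} to the two-sided bounds $|I_R^*(t)|\leq C_\gc e^{-2\nu_\gc R}$ for $R\geq 0$ and $|I_R^*(t)-\tfrac12 P(v^*,w^*)|\leq C_\gc e^{2\nu_\gc R}$ for $R\leq 0$, by integrating \eqref{eq:mono} along the moving lines and sending the line to $\pm\infty$, is exactly the right move; the time reversal $(v,w)\mapsto(v(\cdot,-t),-w(\cdot,-t))$ is indeed a symmetry of \eqref{HLL} preserving all the hypotheses (the constants depend only on $\gc^2$), and one can equivalently get the lower bound by integrating backward in time along the line with $\sigma=-\sigma_\gc$, which then recedes to $+\infty$ where $I^*$ vanishes. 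Integrating \eqref{eq:mono} with $\sigma=0$ over $[t,t+1]$ then gives $\int_t^{t+1}\int_\R\big[(\partial_x v^*)^2+(v^*)^2+(w^*)^2\big](x+a^*(s),s)\,\Phi'(x-R)\,dx\,ds\leq C_\gc e^{-2\nu_\gc|R|}$ uniformly in $R$, as you say.

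The gap is your third step. You flag the passage from these $\Phi'$-localized bounds to the weight $e^{2\nu_\gc|x|}$ as the main technical obstacle and leave it to an unspecified ``Young-type summation''; but a summation over shifted windows is precisely the mechanism you yourself observe diverges when the rates match, so as written the proof is not closed. In fact no summation, and no further use of the monotonicity formula, is needed. Since $\Phi'(y)\geq\tfrac{\nu_\gc}{2}e^{-2\nu_\gc|y|}$, one has $e^{2\nu_\gc x}\leq\tfrac{2}{\nu_\gc}e^{2\nu_\gc R}\Phi'(x-R)$ for every $0\leq x\leq R$, whence
\begin{equation*}
\int_t^{t+1}\!\!\int_0^R\big[(\partial_x v^*)^2+(v^*)^2+(w^*)^2\big](x+a^*(s),s)\,e^{2\nu_\gc x}\,dx\,ds\leq\frac{2}{\nu_\gc}\,e^{2\nu_\gc R}\cdot C_\gc e^{-2\nu_\gc R}=\frac{2C_\gc}{\nu_\gc},
\end{equation*}
uniformly in $R\geq 0$. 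Letting $R\to+\infty$ by monotone convergence, and arguing symmetrically on the negative half-line with $R\to-\infty$, yields the stated estimate. The matching of the two exponential rates is thus not an obstruction but the very reason this single-$R$ comparison closes the argument.
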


We next consider the following map which was introduced by de Laire and Gravejat in \cite{DeLGr},
\begin{equation}
\label{def:Psi}
\Psi := \frac{1}{2} \Big( \frac{\partial_x v}{(1 - v^2)^\frac{1}{2}} + i (1 - v^2)^\frac{1}{2} w \Big) \exp i \theta,
\end{equation}
where
\begin{equation}
\label{def:theta}
\theta(x, t) := - \int_{- \infty}^x v(y, t) w(y, t) \, dy.
\end{equation}
The map $\Psi$ solves the nonlinear Schr\"odinger equation
\begin{equation}
\label{eq:Psi}
i \partial_t \Psi + \partial_{xx} \Psi + 2 |\Psi|^2 \Psi + \frac{1}{2} v^2 \Psi - \Re \Big( \Psi \big( 1 - 2 F(v, \overline{\Psi}) \big) \Big) \big( 1 - 2 F(v, \Psi) \big) = 0,
\end{equation}
with
\begin{equation}
\label{def:F1}
F(v, \Psi)(x, t) := \int_{- \infty}^x v(y, t) \Psi(y, t) \, dy,
\end{equation}
while the function $v$ satisfies the two equations
\begin{equation}
\label{eq:v-Psi}
\left\{ \begin{array}{l}
\partial_t v = 2 \partial_x \Im \Big( \Psi \big( 2 F(v, \overline{\Psi}) - 1 \big) \Big),\\[5pt]
\partial_x v = 2 \Re \Big( \Psi \big( 1 - 2 F(v, \overline{\Psi}) \big) \Big).
\end{array} \right.
\end{equation}
The local Cauchy problem for \eqref{eq:Psi}-\eqref{eq:v-Psi} was analyzed by de Laire and Gravejat in \cite{DeLGr}. We recall the following proposition which shows the continuous dependence with respect to the initial datum of the solutions to the system of equations \eqref{eq:Psi}-\eqref{eq:v-Psi} (see \cite{DeLGr} for the proof).
\begin{prop}[\cite{DeLGr}]
\label{prop:cont-Cauchy}
Let $(v^0, \Psi^0) \in H^1(\R) \times L^2(\R)$ and $(\tilde{v}^0, \tilde{\Psi}^0) \in H^1(\R) \times L^2(\R)$ be such that
$$\partial_x v^0 = 2 \Re \Big( \Psi^0 \big( 1 - 2 F(v^0, \overline{\Psi^0}) \big) \Big), \quad {\rm and} \quad \partial_x \tilde{v}^0 = 2 \Re \Big( \tilde{\Psi}^0 \Big( 1 - 2 F \big( \tilde{v}^0, \overline{\tilde{\Psi}^0} \big) \Big) \Big).$$
Given two solutions $(v, \Psi)$ and $(\tilde{v}, \tilde{\Psi})$ in $\boC^0([0, T_*], H^1(\R) \times L^2(\R))$, with $(\Psi, \tilde{\Psi}) \in L^4([0, T_*], \linebreak[0] L^\infty(\R))^2$, to \eqref{eq:Psi}-\eqref{eq:v-Psi} with initial datum $(v^0, \Psi^0)$, resp. $(\tilde{v}^0, \tilde{\Psi}^0)$, for some positive time $T_*$, there exist a positive number $\tau$, depending only on $\| v^0 \|_{L^2}$, $\| \tilde{v}^0 \|_{L^2}$, $\| \Psi^0 \|_{L^2}$ and $\| \tilde{\Psi}^0 \|_{L^2}$, and a universal constant $A$ such that we have
\begin{equation}
\label{mali}
\begin{split}
\big\| v - \tilde{v} \big\|_{\boC^0([0, T], L^2)} + \big\| \Psi - \tilde{\Psi} \big\|_{\boC^0([0, T], L^2)} + & \big\| \Psi - \tilde{\Psi} \big\|_{L^4([0, T], L^\infty)}\\
\leq & A \Big( \big\| v^0 - \tilde{v}^0 \big\|_{L^2} + \big\| \Psi^0 - \tilde{\Psi}^0 \big\|_{L^2} \big),
\end{split}
\end{equation}
for any $T \in [0, \min \{ \tau, T_* \}]$. In addition, there exists a positive number $B$, depending only on $\| v^0 \|_{L^2}$, $\| \tilde{v}^0 \|_{L^2}$, $\| \Psi^0 \|_{L^2}$ and $\| \tilde{\Psi}^0 \|_{L^2}$, such that
\begin{equation}
\label{burkina}
\begin{split}
\big\| \partial_x v - \partial_x \tilde{v} \big\|_{\boC^0([0, T], L^2)} \leq B \Big( \| v^0 - \tilde{v}^0 \big\|_{L^2} + & \big\| \Psi^0 - \tilde{\Psi}^0 \big\|_{L^2} \Big),
\end{split}
\end{equation}
for any $T \in [0, \min \{ \tau, T_* \}]$.
\end{prop}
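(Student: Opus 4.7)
The plan is to combine Strichartz estimates for the nonlinear Schr\"odinger equation \eqref{eq:Psi} satisfied by $\Psi$ with direct time integration of the first-order equation for $v$. Writing \eqref{eq:Psi} as $i\partial_t \Psi + \partial_{xx}\Psi = N(v,\Psi)$, where $N$ collects the cubic, quadratic and nonlocal nonlinearities, I would first apply Duhamel's formula to $W := \Psi - \tilde{\Psi}$ and invoke the one-dimensional admissible Strichartz pair $(q,r) = (4,\infty)$ together with the trivial energy pair $(\infty,2)$ to obtain
\begin{equation*}
\|W\|_{\boC^0([0,T], L^2)} + \|W\|_{L^4([0,T], L^\infty)} \leq C \|\Psi^0 - \tilde{\Psi}^0\|_{L^2} + C \big\|N(v,\Psi) - N(\tilde{v},\tilde{\Psi})\big\|_{L^1([0,T], L^2)}.
\end{equation*}
The key observation for controlling the nonlocal contributions is the pointwise bound $\|F(v,\Psi)\|_{L^\infty} \leq \|v\|_{L^2}\|\Psi\|_{L^2}$, combined with the algebraic decomposition $F(v,\overline{\Psi}) - F(\tilde{v}, \overline{\tilde{\Psi}}) = F(V, \overline{\Psi}) + F(\tilde{v}, \overline{W})$ for $V := v - \tilde{v}$. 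Using these, every term in the nonlinear difference is dominated in $L^2_x$ by $\|V\|_{L^2} + \|W\|_{L^2}$ times a continuous function of the $L^2$ norms of the solutions, while the pure cubic $|\Psi|^2\Psi - |\tilde{\Psi}|^2\tilde{\Psi}$ supplies an extra factor $\|\Psi\|_{L^\infty}^2 + \|\tilde{\Psi}\|_{L^\infty}^2$ that is absorbed via H\"older in time against the $L^4([0,T], L^\infty)$ Strichartz norm.

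Next, to control $V$ in $\boC^0([0,T], L^2)$, I would subtract the two evolution identities $\partial_t v = 2\partial_x \Im(\Psi(2F(v,\overline{\Psi}) - 1))$, test against $V$, integrate by parts the outer $\partial_x$ derivative, and apply the same decomposition of $F$, reaching a differential inequality of Gr\"onwall type
\begin{equation*}
\tfrac{d}{dt}\|V\|_{L^2}^2 \leq K\big(1 + \|\Psi\|_{L^\infty}^2 + \|\tilde{\Psi}\|_{L^\infty}^2\big)\big(\|V\|_{L^2}^2 + \|W\|_{L^2}^2\big),
\end{equation*}
with $K$ depending only on the $L^2$ norms of the initial data. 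Coupling this with the Strichartz estimate above and running a contraction argument on the combined norm $\|W\|_{\boC^0([0,T],L^2)} + \|W\|_{L^4([0,T], L^\infty)} + \|V\|_{\boC^0([0,T],L^2)}$ over a short interval $[0,\tau]$, with $\tau$ chosen so that the nonlinear contributions (which come with a factor $\tau^{1/2}$ from H\"older) are absorbed, yields \eqref{mali}; the dependence of $\tau$ on the $L^2$ data is precisely what allows the constant $A$ to be universal.

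For the $H^1$ bound \eqref{burkina}, I would dispense with the evolution equation and exploit instead the algebraic constraint $\partial_x v = 2\Re(\Psi(1 - 2F(v,\overline{\Psi})))$. Subtracting its analogue for $\tilde{v}$ and re-using the decomposition of $F$ expresses $\partial_x V$ pointwise as a bilinear combination of $W$, $V$, $\Psi$, $\tilde{\Psi}$, $v$, $\tilde{v}$ and various $F$-factors; each term is controlled in $L^2$ by $\|W\|_{L^2} + \|V\|_{L^2}$ up to the uniform $L^\infty$ bounds on the $F$'s, and inserting \eqref{mali} concludes. I expect the main obstacle to be the book-keeping of the many nonlocal $F$ factors together with the need to retain the $L^4([0,T], L^\infty)$ Strichartz norm as the only mechanism for handling the pure cubic $|\Psi|^2\Psi$ term, which is what forces the smallness of $\tau$ and its dependence on the $L^2$ norms of the data.
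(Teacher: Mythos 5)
The paper does not actually prove this proposition: it is imported from \cite{DeLGr} (the text says ``see \cite{DeLGr} for the proof''), so there is no internal argument to compare against. Your scheme --- Duhamel with the one-dimensional Strichartz pairs $(4,\infty)$ and $(\infty,2)$ for $W=\Psi-\tilde{\Psi}$, the bilinear splitting $F(v,\overline{\Psi})-F(\tilde{v},\overline{\tilde{\Psi}})=F(V,\overline{\Psi})+F(\tilde{v},\overline{W})$ combined with the Cauchy--Schwarz bound $\|F(v,\Psi)\|_{L^\infty}\le\|v\|_{L^2}\|\Psi\|_{L^2}$, a Gr\"onwall estimate for $V=v-\tilde{v}$, and the algebraic constraint in \eqref{eq:v-Psi} --- is exactly the route taken in \cite{DeLGr}, and it does close. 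Two points should be made explicit. First, after integrating by parts in the $V$-energy identity you are left with $\int(\cdots)\,\partial_x V$, and the only way to bound $\|\partial_x V\|_{L^2}$ by $\|V\|_{L^2}+\|W\|_{L^2}$ is to invoke the constraint $\partial_x v = 2\Re\big(\Psi(1-2F(v,\overline{\Psi}))\big)$ already at this stage, not only for \eqref{burkina}; without it the differential inequality you wrote does not follow, since the integration by parts produces an otherwise uncontrolled derivative of $V$. Second, for $\tau$, $A$ and $B$ to depend only on the $L^2$ norms of the data you must first propagate the bounds on $\|v(t)\|_{L^2}$, $\|\Psi(t)\|_{L^2}$ and, via the constraint, on $\|\partial_x v(t)\|_{L^2}$ and hence $\|v(t)\|_{L^\infty}$ over $[0,\tau]$ by the same Strichartz bootstrap applied to each solution separately; this is needed, for instance, to absorb the difference $(v^2-\tilde{v}^2)\Psi$ arising from the term $\tfrac{1}{2}v^2\Psi$. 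Both steps are standard and both are carried out in \cite{DeLGr}.
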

This proposition plays an important role in the proof of not only the smoothing of the limit profile, but also the weak continuity of the hydrodynamical Landau-Lifshitz flow.

In order to prove the smoothness of the limit profile, we rely on the following smoothing type estimate for localized solutions of the linear Schr\"odinger equation (see \cite{BetGrSm2, EsKePoV5} for the proof of Proposition \ref{prop:smoothing}).
\begin{prop}[\cite{BetGrSm2, EsKePoV5}]
\label{prop:smoothing}
Let $\lambda \in \R$ and consider a solution $u \in \boC^0(\R, L^2(\R))$ to the linear \\ Schrödinger equation
\renewcommand{\theequation}{LS}
\begin{equation}
\label{eq:LS}
i \partial_t u + \partial_{xx} u = F,
\end{equation}
with $F \in L^2(\R, L^2(\R))$. Then, there exists a positive constant $K_\lambda$, depending only on $\lambda$, such that
\renewcommand{\theequation}{\arabic{equation}}
\numberwithin{equation}{section}
\setcounter{equation}{36}
\begin{equation}
\label{eq:smoothing}
 \lambda^2 \int_{- T}^T \int_\R |\partial_x u(x, t)|^2 e^{\lambda x} \, dx \, dt \leq K_\lambda \int_{- T - 1}^{T + 1} \int_\R \Big( |u(x, t)|^2 + |F(x, t)|^2 \Big) e^{\lambda x} \, dx \, dt,
\end{equation}
for any positive number $T$.
\end{prop}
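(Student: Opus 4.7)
The estimate is a Kato-type local smoothing bound with an exponential weight, and I would prove it by the multiplier/weighted energy method. The idea is to test the equation against $\chi(t)e^{\lambda x}\overline{\partial_x u}$, where $\chi$ is a smooth time cutoff, and exploit the fact that integration by parts of the dispersive term against the weight $e^{\lambda x}$ produces exactly the quantity we want to estimate.

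\textbf{Setup.} By a standard density/approximation argument I may assume $u$ is smooth and sufficiently decaying in $x$ so that all the integrations by parts are justified; the final estimate only involves the norms on the right-hand side, so passing to the limit is routine. Choose a non-negative $\chi\in\boC_c^\infty(\R)$ with $\chi\equiv 1$ on $[-T,T]$, $\supp\chi\subset[-T-1,T+1]$, and $\|\chi'\|_{L^\infty}\leq 2$ (universal). All space-time integrals below are over $\R\times\R$.

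\textbf{Key identity.} Multiply \eqref{eq:LS} by $\chi(t)\,e^{\lambda x}\,\overline{\partial_x u}$, integrate, and take the real part. The dispersive term, after writing $2\Re(\partial_{xx}u\,\overline{\partial_x u})=\partial_x|\partial_x u|^2$ and integrating by parts in $x$, yields
\begin{equation*}
\Re\iint \chi e^{\lambda x}\,\partial_{xx}u\,\overline{\partial_x u}\,dx\,dt \;=\; -\frac{\lambda}{2}\iint \chi e^{\lambda x}\,|\partial_x u|^2\,dx\,dt.
\end{equation*}
For the time term, $\Re(i\partial_t u\,\overline{\partial_x u})=-\Im(\partial_t u\,\overline{\partial_x u})$. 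Using the pointwise identity
\begin{equation*}
2\,\Im(\partial_t u\,\overline{\partial_x u}) \;=\; -\partial_t\Im(\bar u\,\partial_x u)\;-\;\partial_x\Im(\bar u\,\partial_t u),
\end{equation*}
one integrates by parts in $t$ (producing a factor $\chi'$) and in $x$ (producing a factor $\lambda$). Substituting $i\partial_t u=-\partial_{xx}u+F$ into any remaining $\partial_t u$ and integrating by parts once more eliminates second derivatives, leaving only quadratic expressions in $u$, $\partial_x u$, and $F$ against the weight $\chi e^{\lambda x}$ or $\chi' e^{\lambda x}$. Finally, the source term is handled by Cauchy--Schwarz:
\begin{equation*}
\Big|\Re\iint \chi e^{\lambda x} F\,\overline{\partial_x u}\,dx\,dt\Big| \;\leq\; \frac{\lambda^2}{8}\iint \chi e^{\lambda x}|\partial_x u|^2 \;+\; \frac{2}{\lambda^2}\iint \chi e^{\lambda x}|F|^2.
\end{equation*}

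\textbf{Assembly.} Collecting the three groups of terms, the dominant term $\tfrac{\lambda}{2}\iint\chi e^{\lambda x}|\partial_x u|^2$ (multiplied by $\lambda$ once more when the time-term analysis contributes its $\lambda$-factor from the weight derivative) controls all the remainders: any residual gradient terms coming from the time-term manipulation are absorbed into the left-hand side by taking the Cauchy--Schwarz constants small enough, and the remaining pieces are bounded by $K_\lambda\iint \chi e^{\lambda x}(|u|^2+|F|^2)$. Since $\chi\equiv 1$ on $[-T,T]$ and $\supp\chi\subset[-T-1,T+1]$, one obtains \eqref{eq:smoothing} with a constant $K_\lambda$ depending only on $\lambda$ (through the powers $\lambda^{-1}$ and $\lambda^{-2}$ arising from the Cauchy--Schwarz absorptions).

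\textbf{Main obstacle.} The genuine difficulty is the bookkeeping of the time term: turning $\Im(\partial_t u\,\overline{\partial_x u})$ into an exact time-derivative requires combining two distinct integrations by parts (one in $t$, one in $x$) and then re-using the equation to avoid reintroducing $\partial_{xx}u$. One has to track exactly where each factor of $\lambda$ appears so that the $\lambda^2$ on the left of \eqref{eq:smoothing} is produced and not accidentally converted into a weaker power; this is what pins down the constant $K_\lambda$ and explains its dependence on $\lambda$. Once this algebra is done cleanly, the rest of the proof is Cauchy--Schwarz and absorption.
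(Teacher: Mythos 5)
The paper does not prove this proposition itself; it quotes it from \cite{BetGrSm2, EsKePoV5}, where the argument conjugates by the weight rather than using your multiplier. Your approach, as written, has a genuine gap precisely at the point you dismiss in the ``Assembly'' paragraph. Carrying out your computation, the time term produces (besides absorbable pieces) the contribution
\begin{equation*}
\frac12\iint \chi'(t)\,\Im\big(\bar u\,\partial_x u\big)\,e^{\lambda x}\,dx\,dt ,
\end{equation*}
which carries a full derivative of $u$ on the transition region $T\le |t|\le T+1$. Unlike $\Re(\bar u\,\partial_x u)=\tfrac12\partial_x|u|^2$, the momentum density $\Im(\bar u\,\partial_x u)$ is not an exact $x$-derivative of a derivative-free quadratic expression, so no further integration by parts removes $\partial_x u$ from it. It cannot be absorbed into the left-hand side $\lambda\iint\chi\,e^{\lambda x}|\partial_x u|^2$, because it is weighted by $\chi'$, which is supported exactly where $\chi$ degenerates to zero (a compactly supported cutoff cannot satisfy $|\chi'|\le C\chi$); and it cannot be bounded by the right-hand side of \eqref{eq:smoothing}, which contains no derivative of $u$. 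Replacing the cutoff by integration between well-chosen times $t_0\in[-T-1,-T]$, $t_1\in[T,T+1]$ does not help: the boundary terms $\Im\int\bar u\,\partial_x u\,e^{\lambda x}$ at fixed times are likewise uncontrolled, and a bootstrap would presuppose the finiteness of $\iint|\partial_x u|^2e^{\lambda x}$, which is part of the conclusion, not a hypothesis. (There is also a sign slip in your pointwise identity: the correct version is $2\Im(\partial_t u\,\overline{\partial_x u})=\partial_x\Im(\bar u\,\partial_t u)-\partial_t\Im(\bar u\,\partial_x u)$, and the net coefficient of $\iint\chi e^{\lambda x}|\partial_x u|^2$ is a single power of $\lambda$, not $\lambda^2$; the latter is harmless since $K_\lambda$ may depend on $\lambda$, but the $\chi'$ term is fatal.)

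The proof in the cited references avoids this term structurally. Set $v:=\chi(t)\,e^{\lambda x/2}u$; then
\begin{equation*}
i\partial_t v+\partial_{xx}v-\lambda\,\partial_x v+\frac{\lambda^2}{4}\,v=\chi\,e^{\lambda x/2}F+i\,\chi'\,e^{\lambda x/2}u=:G,
\end{equation*}
and the time-cutoff commutator now enters only through $i\chi'e^{\lambda x/2}u$, a term of order zero in $u$, bounded by the right-hand side of \eqref{eq:smoothing}. Since $v,G\in L^2(\R^2)$, the space--time Fourier transform gives $(-\tau-\xi^2+\tfrac{\lambda^2}{4}-i\lambda\xi)\hat v=\hat G$, and the lower bound $|{-\tau-\xi^2+\tfrac{\lambda^2}{4}-i\lambda\xi}|\ge|\lambda||\xi|$ on the symbol yields $\lambda^2\|\partial_x v\|_{L^2_{t,x}}^2\le\|G\|_{L^2_{t,x}}^2$; undoing the conjugation ($\partial_x v=\chi e^{\lambda x/2}(\partial_x u+\tfrac{\lambda}{2}u)$) gives \eqref{eq:smoothing}. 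If you want to keep a purely physical-space argument, you must reproduce this mechanism (an energy identity for the conjugated operator, whose skew-symmetric part $-\lambda\partial_x$ produces the derivative gain), not the bare multiplier $\chi e^{\lambda x}\overline{\partial_x u}$.
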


We apply Proposition \ref{prop:smoothing} to $\Psi^*$ as well as all its derivatives, where $\Psi^*$ is the solution to \eqref{eq:Psi} associated to the solution $(v^*, w^*)$ of \eqref{HLL}, and then we express the result in terms of $(v^*, w^*)$ to obtain
\begin{prop}
\label{prop:smooth}
The pair $(v^*, w^*)$ is indefinitely smooth and exponentially decaying on $\R \times\R$. Moreover, given any $k \in \N$, there exists a positive constant $A_{k, \gc}$, depending only on $k$ and $\gc$, such that
\begin{equation}
\label{eq:smooth}
\int_\R \big[ (\partial_x^{k + 1} v^*)^2 + (\partial^k_x v^*)^2 + (\partial_x^k w^*)^2 \big](x + a^*(t), t) e^{ \nu_\gc |x|} \, dx \leq A_{k, \gc},
\end{equation}
for any $t\in \R$.
\end{prop}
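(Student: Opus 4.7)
The strategy is to transfer the problem to the auxiliary Schrödinger variable $\Psi^*$ defined by \eqref{def:Psi} for the solution $(v^*,w^*)$, use Proposition \ref{prop:local} as a base case, and bootstrap via Proposition \ref{prop:smoothing}. Since $v^*$ stays uniformly bounded away from $\pm 1$ by \eqref{eq:lvmh2bis}, the algebraic identity $|\Psi|^2 = \tfrac14\bigl((\partial_x v)^2/(1-v^2)+(1-v^2)w^2\bigr)$ provides a two-sided comparison between $|\Psi^*|^2$ and $(\partial_x v^*)^2+(w^*)^2$. Hence Proposition \ref{prop:local} translates directly into
$$\int_t^{t+1}\int_\R |\Psi^*(y,s)|^2 e^{2\nu_\gc |y-a^*(s)|}\,dy\,ds \leq C_\gc,$$
which is the base case exponential localization for $\Psi^*$ in the moving frame.

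Next, after moving to the soliton's frame $y = x - a^*(s)$ (which introduces a transport term that can be absorbed by a time-dependent phase shift or treated as a lower-order perturbation of \eqref{eq:LS}), I apply Proposition \ref{prop:smoothing} with weights $e^{\pm\nu_\gc y}$. The right-hand side of \eqref{eq:Psi} is the sum of the cubic term $2|\Psi^*|^2\Psi^*$, the quadratic term $\tfrac12(v^*)^2\Psi^*$, and terms involving the nonlocal primitives $F(v^*,\Psi^*)$ and $F(v^*,\overline{\Psi^*})$. The nonlocal quantities are uniformly bounded by Cauchy-Schwarz against $v^*\in L^2$ and $\Psi^*\in L^2$ (both finite uniformly in $t$ by Theorem \ref{thm:orbistab}); the quadratic term is handled as in the base case. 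The cubic term is the delicate one: because we only have $\Psi^*\in L^2$ a priori, I cannot use the Sobolev embedding $H^1 \hookrightarrow L^\infty$ and must invoke the Gagliardo-Nirenberg inequality $\|\Psi^*\|_{L^\infty}^2 \lesssim \|\Psi^*\|_{L^2}\|\partial_x\Psi^*\|_{L^2}$, which couples the current bootstrap step to the previous one. Proposition \ref{prop:smoothing} then yields exponential $L^2$-decay for $\partial_x\Psi^*$, and converting via \eqref{eq:v-Psi} produces the first-order estimate \eqref{eq:smooth}.

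The induction proceeds by differentiating \eqref{eq:Psi} in $x$ to obtain, for each $k$, an equation $i\partial_t(\partial_x^k\Psi^*) + \partial_{xx}(\partial_x^k\Psi^*) = G_k$, in which $G_k$ is a polynomial in $\Psi^*$, $v^*$, $F(v^*,\Psi^*)$ and their derivatives of order at most $k$. Each application of Proposition \ref{prop:smoothing} shrinks the exponential weight by an arbitrarily small amount, so at every fixed $k$ the weight can be arranged to remain at least $\nu_\gc$. To translate bounds on $\partial_x^k\Psi^*$ into bounds on $\partial_x^k v^*$ and $\partial_x^k w^*$ I use the inversion formulas $\partial_x v^* = 2(1-(v^*)^2)^{1/2}\Re(\Psi^* e^{-i\theta^*})$ and $w^* = 2\Im(\Psi^* e^{-i\theta^*})/(1-(v^*)^2)^{1/2}$, together with $\partial_x\theta^* = -v^* w^*$ to control the phase (note that $\theta^*$ itself is not localized but its derivative is). Pointwise-in-time bounds, as required in \eqref{eq:smooth}, follow from the integrated-in-time estimates by the time-continuity of the relevant Sobolev norms provided by Proposition \ref{prop:cont-Cauchy} and its higher-order analogues, combined with the translation invariance of the entire scheme in time.

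The main obstacle is the cubic nonlinearity and, more generally, the high-order terms in $G_k$. Because $\Psi^*$ only lives in $L^2$ rather than $H^1$ — contrary to the Gross-Pitaevskii setting of \cite{BetGrSm2} where the analogous variable sits in $H^1$ — every $L^\infty$ estimate has to be extracted through Gagliardo-Nirenberg interpolation, so the induction hypothesis at stage $k$ must be chosen carefully to provide enough derivatives and enough decay to close the estimate at stage $k+1$. Tracking the interplay between the loss of exponential weight, the loss of derivatives, and the nonlocal primitives $F(v^*,\Psi^*)$ (whose derivatives reintroduce $v^*\Psi^*$ terms at the top order) is the technical heart of the argument.
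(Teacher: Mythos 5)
Your overall architecture coincides with the paper's: pass to the Schr\"odinger variable $\Psi^*$, take Proposition \ref{prop:local} as the base localization, bootstrap derivatives with Proposition \ref{prop:smoothing}, convert back to $(v^*,w^*)$ through \eqref{eq:v-Psi}, and upgrade $L^2$-in-time bounds to $L^\infty$-in-time at the end. There is, however, a genuine gap at the very first application of Proposition \ref{prop:smoothing}, precisely where you flag the cubic term as ``the delicate one''. To apply the smoothing estimate to $\Psi^*$ itself you must place $|\Psi^*|^2\Psi^*$ in $L^2_t L^2(e^{\lambda x}dx)$, and your proposed tool --- Gagliardo--Nirenberg $\|\Psi^*\|_{L^\infty}^2\lesssim\|\Psi^*\|_{L^2}\|\partial_x\Psi^*\|_{L^2}$, ``coupling the current bootstrap step to the previous one'' --- is circular at $k=0$: there is no previous step, and the square-integrability of $\partial_x\Psi^*$ is exactly what this first application is meant to produce, since $\Psi^*$ is only an $L^2$-level solution. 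The paper closes this with two ingredients absent from your proposal: the Strichartz-type bound $\Psi^*\in L^4([t-1,t+2],L^\infty(\R))$, obtained from the well-posedness theory of Proposition \ref{prop:cont-Cauchy} by comparison with the soliton on unit time windows (see \eqref{estimL4Linf}), and the Kenig--Ponce--Vega persistence lemma (Lemma \ref{Lemma 2.1}), which upgrades the space-time weighted bound \eqref{estL2psi*} to $\sup_s\|\Psi^* e^{\nu_\gc|\cdot|}\|_{L^2}\leq A_\gc$; H\"older in time then controls $\||\Psi^*|^2\Psi^* e^{\nu_\gc|x|}\|_{L^2_{t,x}}$. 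Gagliardo--Nirenberg enters only afterwards, in the induction step, to prove $\Psi^*\in L^\infty_{t,x}$ via the functional $H(s)=\frac{1}{2}\int_\R(|\partial_x\Psi^*|^2-|\Psi^*|^4)$, once $\partial_x\Psi^*\in L^\infty_tL^2_x$ is already available.

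Two smaller points. The paper does not pass to a genuinely moving frame: it translates by the \emph{fixed} amount $a^*(t)$ on each window $[t-1,t+2]$, using only that $a^*$ has bounded derivative; a true change of variables $y=x-a^*(s)$ creates a transport term $a^{*\prime}(s)\partial_y u$ of the same order as the derivative gained by \eqref{eq:smoothing}, and ``absorbing it by a phase'' is a Galilean boost that distorts the exponential weight, so your parenthetical should be replaced by the fixed-translation device. Also, the loss of weight is a fixed factor (from $e^{2\nu_\gc|x|}$ to $e^{\nu_\gc|x|}$) rather than ``arbitrarily small'' per step, though this does not affect the scheme.
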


\subsubsection{The Liouville type theorem}
\label{sub:rigidity}

We next establish a Liouville type theorem, which guarantees that the limit profile constructed above is exactly a soliton. In particular, we will show that $\eps_0^* \equiv 0$.

The pair $\eps^*$ satisfies the equation
\begin{equation}
\label{eq:pourepsbis}
\partial_t \eps^* = J \boH_{c^*(t)}(\eps^*) + J \boR_{c^*(t)} \eps^* + \big( {a^*}'(t) - c^*(t) \big) \big( \partial_x Q_{c^*(t)} + \partial_x \eps^* \big) - {c^*}'(t) \partial_c Q_{c^*(t)},
\end{equation}
where $J$ is the symplectic operator
\begin{equation}
\label{def:J}
J = - 2 S \partial_x := \begin{pmatrix} 0 & - 2 \partial_x \\ - 2 \partial_x & 0 \end{pmatrix},
\end{equation}
and the remainder term $\boR_{c^*(t)} \eps^*$ is given by
$$\boR_{c^*(t)} \eps^* := E'(Q_{c^*(t)} + \eps^*) - E'(Q_{c^*(t)}) - E''(Q_{c^*(t)})(\eps^*).$$
We rely on the strategy developed by Martel and Merle in \cite{MartMer6} (see also \cite{Martel2}), and then applied by Béthuel, Gravejat and Smets in \cite{BetGrSm2} to the Gross-Pitaevskii equation. We define the pair
\begin{equation}
\label{def:u*}
u^*(\cdot, t) := S \boH_{c^*(t)}(\eps^*(\cdot, t)).
\end{equation}
Since $S \boH_{c^*(t)}(\partial_x Q_{c^*(t)}) = 0$, we deduce from \eqref{eq:pourepsbis} that
\begin{equation}
\label{eq:pouru*}
\begin{split}
\partial_t u^* = & \ S \boH_{c^*(t)} \big( J S u^* \big) + S \boH_{c^*(t)} \big( J \boR_{c^*(t)} \eps^* \big) - (c^*)'(t) S \boH_{c^*(t)}(\partial_c Q_{c^*(t)})\\
& \ + (c^*)'(t) S \partial_c \boH_{c^*(t)}(\eps^*) + \big( (a^*)'(t) - c^*(t) \big) S \boH_{c^*(t)}(\partial_x \eps^*).
\end{split}
\end{equation}

Decreasing further the value of $\beta_\gc$ if necessary, we have

\begin{prop}
\label{prop:monou1}
There exist two positive numbers $A_*$ and $R_*$, depending only on $\gc$, such that we have
\footnote{In \eqref{eq:petitplateau}, we use the notation
$$\big\| (f, g) \big\|_{X(\Omega)}^2 : = \int_\Omega \Big( (\partial_x f)^2 + f^2 + g^2 \Big),$$
in which $\Omega$ denotes a measurable subset of $\R$.}
\begin{equation}
\label{eq:petitplateau}
\frac{d}{dt} \bigg( \int_\R x u^*_1(x, t) u^*_2(x, t) \, dx \bigg) \geq \frac{1 - \gc^2}{16} \big\| u^*(\cdot, t) \big\|_{X(\R)}^2 - A_* \| u^*(\cdot, t)\|_{X(B(0, R_*))}^2,
\end{equation}
for any $t \in \R$. 
\end{prop}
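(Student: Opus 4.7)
The plan is to differentiate the virial functional $V(t) := \int_\R x\, u_1^*(x,t)\, u_2^*(x,t)\,dx$ and substitute the evolution equation \eqref{eq:pouru*}. Since $J = -2S\partial_x$ with $S$ the swap involution satisfying $S^2 = I$ and $S^T = S$, one has $JS = -2\partial_x$ (as a diagonal operator), so the dominant linear piece of $\partial_t u^*$ is $-2S\boH_{c^*(t)}\partial_x u^*$. Writing $V(t) = \tfrac{1}{2}\int_\R x\,\langle u^*, Su^*\rangle\,dx$ and using the self-adjointness of $\boH_{c^*(t)}$, the main contribution to $V'(t)$ becomes
\[
-2\int_\R x\,\langle \partial_x u^*, \boH_{c^*(t)} u^*\rangle\,dx = \int_\R \langle u^*, \boH_{c^*(t)} u^*\rangle\,dx + \int_\R x\,\langle u^*, [\partial_x, \boH_{c^*(t)}] u^*\rangle\,dx,
\]
after a single integration by parts in $x$; the boundary terms vanish by the exponential decay of $u^*$ inherited from Proposition \ref{prop:smooth}.

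For the coercivity, I would decompose $\boH_{c^*} = \boH_\infty + (\boH_{c^*} - \boH_\infty)$, where
\[
\boH_\infty := \begin{pmatrix} -\partial_{xx} + 1 & c^* \\ c^* & 1 \end{pmatrix}
\]
is the constant-coefficient limit obtained from the exponential decay of $v_{c^*}$. The pointwise bound $2|c^*||u_1^*||u_2^*| \leq |c^*|((u_1^*)^2 + (u_2^*)^2)$ yields
\[
\int_\R \langle u^*, \boH_\infty u^*\rangle\,dx \geq (1 - |c^*|)\,\|u^*\|_{X(\R)}^2 \geq \frac{1 - \gc^2}{4}\|u^*\|_{X(\R)}^2,
\]
once $\beta_\gc$ is small enough that $|c^*(t) - \gc| \leq A_\gc \beta_\gc$ forces $|c^*(t)|$ close to $|\gc|$. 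The coefficients of $\boH_{c^*} - \boH_\infty$ and of the commutator $[\partial_x, \boH_{c^*}]$ are built from $v_{c^*}$ and its derivatives, hence exponentially localized; therefore, even when multiplied by the weight $|x|$, both corrections are bounded by $C_\gc\|u^*\|_{X(B(0, R_*))}^2$ as soon as $R_*$ is large enough that $|x|e^{-\nu_\gc|x|}$ is small outside $B(0, R_*)$.

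The remaining pieces of \eqref{eq:pouru*} are treated as errors. The nonlinear remainder contribution $\int x \,\langle S\boH_{c^*}(J \boR_{c^*}\eps^*), S u^*\rangle\,dx$ is handled by moving $\boH_{c^*}$ back onto $u^*$ via self-adjointness and using the fact that $\boR_{c^*}\eps^*$ is quadratic in $\eps^*$; combined with the exponential localization of $\eps^*$ from Proposition \ref{prop:smooth}, this is bounded by $C_\gc \|\eps^*\|_X \|u^*\|_X \leq C_\gc \beta_\gc \|u^*\|_X^2$. The three modulation terms have prefactors $(c^*)'$ and $(a^*)' - c^*$ both controlled by $A_\gc \|\eps^*\|_X \leq A_\gc \beta_\gc$ via \eqref{eq:modul0bis}, while the functions $\partial_c Q_{c^*}$, $\partial_c \boH_{c^*}$ and $\partial_x\eps^*$ paired with them are exponentially localized soliton-type profiles or carry decay from Proposition \ref{prop:smooth}; even against $|x|$ they contribute at most $C_\gc \beta_\gc \|u^*\|_X^2$. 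Choosing $\beta_\gc$ small absorbs all of these into half of the coercive main term, leaving the desired lower bound.

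The main obstacle is the $|x|$-weighted estimate on the nonlinear remainder: since $\boR_{c^*}\eps^*$ is only quadratic, no linear $L^2$-bound on $\eps^*$ would control $\int |x|\,|\boR_{c^*}\eps^*|\,|u^*|\,dx$, and it is precisely the exponential localization of Proposition \ref{prop:smooth} that closes this estimate. A secondary subtlety is that $u^*$ itself is not orthogonal to $\Ker(\boH_{c^*})$ nor to $\chi_{c^*}$, so one cannot invoke any spectral coercivity for $\boH_{c^*}$; the argument deliberately relies on the asymptotic coercivity of $\boH_\infty$ alone, sweeping every pathological local contribution into the compactly supported correction $A_*\|u^*\|_{X(B(0,R_*))}^2$ in \eqref{eq:petitplateau}.
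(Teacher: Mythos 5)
Your overall strategy is the same as the paper's: differentiate the virial functional, show that the leading term $-2\int_\R x\,\langle \boH_{c^*}(\partial_x u^*),u^*\rangle$ is coercive up to an exponentially localized correction by comparing $\boH_{c^*}$ with its constant-coefficient limit at spatial infinity, and treat the remainder and modulation terms as errors. The paper does the comparison by writing out $\boH_{c^*}$ explicitly from \eqref{def:boHc}, integrating by parts term by term, and bounding the resulting integrand pointwise for $|x|\geq R$; your $\boH_\infty$-decomposition is a cleaner packaging of the same idea. Your handling of the error terms (prefactors controlled by \eqref{eq:modul0bis} and \eqref{eps:u}, the weight $|x|$ absorbed by the exponential decay coming from Proposition \ref{prop:smooth}) is also consistent with the paper, although the paper does not rely on smallness of $\beta_\gc$ here: it splits $\R$ into $B(0,R)$ and its complement, uses $|x|\leq \delta e^{\nu_\gc|x|/4}$ outside, and obtains for each error a bound of the form $\frac{1-\gc^2}{64}\|u^*\|_{X(\R)}^2+A\|u^*\|^2_{X(B(0,R))}$.

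There is, however, a genuine error in your computation of the main term. First, you cannot replace $-2\int x\,\langle \boH_{c^*}(\partial_x u^*),u^*\rangle$ by $-2\int x\,\langle \partial_x u^*,\boH_{c^*}u^*\rangle$ ``by self-adjointness'': the weight $x$ does not commute with $\boH_{c^*}$, and the commutator $[x,\boH_{c^*}]$ contains a first-order operator whose coefficient tends to a nonzero constant at infinity, so the discrepancy is not localized. Second, the identity you then invoke,
\begin{equation*}
-2\int_\R x\,\langle \partial_x u,\boH u\rangle = \int_\R \langle u,\boH u\rangle + \int_\R x\,\langle u,[\partial_x,\boH]u\rangle,
\end{equation*}
is false even for the scalar model $\boH=-\partial_{xx}+1$, where $[\partial_x,\boH]=0$: the left-hand side equals $-\int(\partial_x u)^2+\int u^2$ while the right-hand side equals $+\int(\partial_x u)^2+\int u^2$. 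In particular your intermediate expression carries the wrong sign on the gradient term, and your final coercive bound is reached only because two errors compensate. The correct double integration by parts of $2\int x\,\partial_x\big(\tfrac{\partial_{xx}u_1^*}{1-v_{c^*}^2}\big)u_1^*$ produces a strictly positive multiple of $\int\frac{(\partial_x u_1^*)^2}{1-v_{c^*}^2}$ plus exponentially localized terms (this is the paper's computation of $\iota_1^*$), so the true main term dominates the expression you claim and the conclusion \eqref{eq:petitplateau} does hold; but as written your derivation does not establish it, and you should redo the virial computation on $-2\int x\,\langle \boH_{c^*}(\partial_x u^*),u^*\rangle$ directly.
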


We give a second monotonicity type formula to dispose of the non-positive local term \\ $\| u^*(\cdot, t) \|_{X(B(0, R_*))}^2$ in the right-hand side of \eqref{eq:petitplateau}. If $M$ is a smooth, bounded, two-by-two \\
symmetric matrix-valued function, then 
\begin{equation}
\label{eq:pignon}
\frac{d}{dt} \big\langle M u^*, u^* \big\rangle_{L^2(\R)^2} = 2 \big\langle S M u^*,\boH_{c^*}(-2 u^*) \big\rangle_{L^2(\R)^2} + \text{``super-quadratic terms'',}
\end{equation}
where $S$ is the matrix
$$S := \begin{pmatrix} 0 & 1 \\ 1 & 0 \end{pmatrix}.$$
For $c \in (- 1, 1) \setminus \{ 0 \}$, let $M_c$ be given by
\begin{equation}
\label{def:Mc}
M_c := \begin{pmatrix} - \frac{2c v_c \partial_x v_c}{ (1 - v_c)^2} & - \frac{\partial_x v_c}{v_c} \\ - \frac{\partial_x v_c}{v_c} & 0 \end{pmatrix}.
\end{equation}

We have the following lemma.
\begin{lem}
\label{lem:loc-coer}
Let $c \in (- 1, 1) \setminus \{ 0 \}$ and $u \in X^3(\R)$. Then,
\begin{equation}
\label{eq:loc-virial}
\begin{split}
G_c(u) := & 2 \big\langle S M_c u, \boH_c(-2\partial_x u) \big\rangle_{L^2(\R)^2}\\
= & 2 \int_\R \mu_c \Big( u_2 - \frac{c v^2_c}{\mu_c} u_1 - \frac{2c v_c \partial_x v_c}{\mu_c (1-v^2_c) } \partial_x u_1 \Big)^2 + 3 \int_\R \frac{v_c^4}{\mu_c} \Big( \partial_x u_1 - \frac{\partial_x v_c}{v_c} u_1 \Big)^2,
\end{split}
\end{equation}
where 
\begin{equation}
\label{def:mu_c}
\mu_c = 2 (\partial_x v_c)^2 + v_c^2(1-v_c^2) > 0.
\end{equation}
\end{lem}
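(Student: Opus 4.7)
The statement is an algebraic identity, so my plan is to verify it by direct computation. I start by making the operator $\boH_c = E''(Q_c) + c P''(Q_c)$ fully explicit. Expanding $E + cP$ to second order at $Q_c = (v_c, w_c)$ and simplifying with the profile equations
\begin{equation*}
\partial_{xx} v_c = (1 - c^2 - 2v_c^2)\, v_c, \qquad (\partial_x v_c)^2 = (1-c^2)v_c^2 - v_c^4, \qquad w_c = \frac{c v_c}{1 - v_c^2},
\end{equation*}
one obtains $\boH_c$ as a symmetric block operator whose $(1,1)$ entry is $-\partial_x\big((1-v_c^2)^{-1}\partial_x\cdot\big)$ plus multiplication by an explicit function of $v_c, \partial_x v_c$ and $c^2$, whose $(2,2)$ entry is multiplication by $1 - v_c^2$, and whose off-diagonal entries are multiplication by $c - 2v_c w_c = c(1-3v_c^2)/(1-v_c^2)$.

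With $\boH_c$ explicit, I next compute $\boH_c(-2\partial_x u)$ componentwise. From the definition of $M_c$ and $S$,
\begin{equation*}
SM_c u = \begin{pmatrix} -\dfrac{\partial_x v_c}{v_c}\, u_1 \\[6pt] -\dfrac{2cv_c \partial_x v_c}{(1-v_c)^2}\, u_1 - \dfrac{\partial_x v_c}{v_c}\, u_2 \end{pmatrix}.
\end{equation*}
Forming the $L^2$ pairing $2\langle SM_c u, \boH_c(-2\partial_x u)\rangle_{L^2(\R)^2}$ and integrating by parts to shift all derivatives off $u$, the assumption $u \in X^3(\R)$ guarantees enough regularity and decay for the boundary terms to vanish. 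Since the $(2,2)$ entry of $\boH_c$ is multiplicative, no derivatives on $u_2$ survive, and the pairing reduces to an integral of a quadratic form $\mathcal{Q}(u_1, \partial_x u_1, u_2)$ whose coefficients are explicit functions of $v_c$, $\partial_x v_c$, $\partial_{xx} v_c$ and $c$; the profile equations allow me to eliminate $\partial_{xx} v_c$ in favour of polynomial expressions in $v_c$ and $(\partial_x v_c)^2$.

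The last step is to verify that $\mathcal{Q}$ matches the expansion of the right-hand side of \eqref{eq:loc-virial}. Expanding the two squares and reading off the coefficients of $u_2^2$, $u_1 u_2$, $u_2 \partial_x u_1$, $u_1^2$, $u_1 \partial_x u_1$ and $(\partial_x u_1)^2$ gives six equalities. The $u_2^2$ coefficient fixes $2\mu_c$ directly; the linear-in-$u_2$ coefficients pin down the interior of the first square; and the remaining $u_1, \partial_x u_1$ coefficients must assemble into $3(v_c^4/\mu_c)\big(\partial_x u_1 - (\partial_x v_c/v_c)\, u_1\big)^2$. The main obstacle is this last reassembly: after completing the square in $u_2$, the residual quadratic form in $(u_1, \partial_x u_1)$ must be a perfect square with vanishing discriminant and leading coefficient $3v_c^4/\mu_c$. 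Both of these requirements collapse to the single algebraic identity
\begin{equation*}
\mu_c = 2(\partial_x v_c)^2 + v_c^2(1 - v_c^2) = v_c^2\big(3 - 3v_c^2 - 2c^2\big),
\end{equation*}
which is an immediate consequence of $(\partial_x v_c)^2 = (1-c^2)v_c^2 - v_c^4$. The choice of $M_c$ is precisely engineered so that this identity closes the calculation, and once it is invoked the matching of all six coefficients becomes a routine (if lengthy) verification.
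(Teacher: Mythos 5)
Your overall strategy --- write $\boH_c$ explicitly, compute $SM_cu$, integrate by parts to remove all derivatives from $u$, complete the square in $u_2$, and check that the residual quadratic form in $(u_1,\partial_x u_1)$ is the perfect square $3\tfrac{v_c^4}{\mu_c}\big(\partial_x u_1-\tfrac{\partial_x v_c}{v_c}u_1\big)^2$ --- is exactly the route the paper takes, and your observation that $\mu_c=v_c^2(3-3v_c^2-2c^2)$ is the identity that makes everything close is correct and genuinely the heart of the matter.

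There is, however, one concrete error that would prevent your verification from closing: your off-diagonal entry of $\boH_c$. You compute it as $c-2v_cw_c=c(1-3v_c^2)/(1-v_c^2)$, i.e.\ the Hessian of $E+cP$ taken literally. But the operator for which \eqref{eq:loc-virial} (and the properties \eqref{eq:Ker-Hc}, $\boH_c(\partial_cQ_c)=P'(Q_c)$) actually holds is the one written explicitly in \eqref{def:boHc}, whose off-diagonal is $-c\tfrac{1+v_c^2}{1-v_c^2}=-c-2v_cw_c$; the two differ by $2c$, i.e.\ by the sign of the $cP''$ contribution. The correct sign is forced by the profile: since $(1-v_c^2)w_c=cv_c$ by \eqref{form:vc}, the pair $Q_c$ is a critical point of $E-cP$, not of $E+cP$, so the relevant Hessian is $E''(Q_c)-cP''(Q_c)$ (the ``$+$'' in the remark defining $\boH_c$ is a sign-convention slip; translation invariance of $E-cP$ is what puts $\partial_xQ_c$ in the kernel). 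With your sign, the coefficients of $u_1u_2$ and $u_2\partial_xu_1$ in the pairing come out wrong, so the completion of the square in $u_2$ does not produce the combination $\tfrac{cv_c^2}{\mu_c}u_1+\tfrac{2cv_c\partial_xv_c}{\mu_c(1-v_c^2)}\partial_xu_1$ appearing in \eqref{eq:loc-virial}, and the six-coefficient matching in your last step fails. Once you replace the off-diagonal by $-c\tfrac{1+v_c^2}{1-v_c^2}$, your plan goes through and coincides with the paper's computation (which also uses $\partial_x(\partial_xv_c/v_c)=-v_c^2$ alongside the two profile identities you list).
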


The functional $G_c$ is a non-negative quadratic form, and
\begin{equation}
\label{ker-Gc}
\Ker(G_c) = \Span(Q_c).
\end{equation}
We have indeed chosen the matrix $M_c$ such that $M_c Q_c = \partial_x Q_c$ to obtain \eqref{ker-Gc}. Since $Q_c$ does not vanish, we deduce from standard Sturm-Liouville theory, that $G_c$ is non-negative, which is confirmed by the computation in Lemma \ref{lem:loc-coer}.

By the second orthogonality condition in \eqref{eq:orthobis} and the fact that $\boH_{c^*}(\chi_{c^*}) = -\tilde{\lambda}_{c^*} \chi_{c^*}$, we have
\begin{equation}
\label{eq:ortho-u*}
0 = \langle \boH_{c^*}(\chi_{c^*}), \eps^* \rangle_{L^2(\R)^2} = \langle \boH_{c^*}(\eps^*),\chi_{c^*} \rangle_{L^2(\R)^2} = \langle u^*, S \chi_{c^*} \rangle_{L^2(\R)^2}.
\end{equation}
On the other hand, we know that
\begin{equation}
\label{eq:angle}
\big\langle Q_{c^*} , S \chi_{c^*} \big\rangle =  P'\big(Q_{c^*}\big)\big(\chi_{c^*}\big) \neq 0,
\end{equation}
so that the pair $u^*$ is not proportional to $Q_{c^*}$ under the orthogonality condition in \eqref{eq:ortho-u*}. We claim the following coercivity property of $G_c$ under this orthogonality condition.
\begin{prop}
\label{prop:coer-Gc}
Let $c \in (- 1, 1) \setminus \{ 0 \}$. There exists a positive number $\Lambda_c$, depending only and continuously on $c$, such that
\begin{equation}
\label{eq:coer-Gc}
G_c(u) \geq \Lambda_c \int_\R \big[ (\partial_x u_1)^2 + (u_1)^2 + (u_2)^2 \big] (x) e^{- 2 |x|} \, dx,
\end{equation}
for any pair $u \in X(\R)$ verifying
\begin{equation}
\label{eq:ortho-u}
\langle u, S \chi_c \rangle_{L^2(\R)^2} = 0.
\end{equation}
\end{prop}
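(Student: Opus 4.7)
The inequality is proved by a compactness-and-contradiction argument exploiting both the sum-of-squares representation (\ref{eq:loc-virial}) and the orthogonality (\ref{eq:ortho-u}). Suppose, for contradiction, that no such $\Lambda_c > 0$ exists. Then one can extract a sequence $(u_n) \subset X(\R)$ with $\langle u_n, S\chi_c\rangle_{L^2(\R)^2} = 0$, normalized by
\begin{equation*}
\int_\R \big[ (\partial_x u_{n,1})^2 + u_{n,1}^2 + u_{n,2}^2 \big]\, e^{-2|x|}\, dx = 1,
\end{equation*}
and such that $G_c(u_n) \to 0$.

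The key observation is that, because $|c| < 1$, the profile $v_c$ decays at rate $(1-c^2)^{1/2} < 1$, so the weights $v_c^4/\mu_c$ and $\mu_c$ appearing in (\ref{eq:loc-virial}) strictly dominate $e^{-2|x|}$ uniformly on $\R$. Consequently, $G_c(u_n) \to 0$ produces the two convergences
\begin{equation*}
\partial_x u_{n,1} - \tfrac{\partial_x v_c}{v_c}\, u_{n,1} \longrightarrow 0 \quad \text{and} \quad u_{n,2} - \tfrac{c v_c^2}{\mu_c}\, u_{n,1} - \tfrac{2c v_c \partial_x v_c}{\mu_c(1 - v_c^2)}\, \partial_x u_{n,1} \longrightarrow 0
\end{equation*}
in $L^2(e^{-2|x|} dx)$. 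Moreover, this dominance prevents any mass concentration at infinity: a sequence with mass escaping to $\pm\infty$ while maintaining the normalization would necessarily make $G_c(u_n)$ blow up. Combined with the local $H^1$ Sobolev compactness for $u_{n,1}$ on each compact interval, this yields (up to subsequence) a strong limit $u_{n,1} \to u_1$ in $L^2(e^{-2|x|} dx)$; the strong convergences $\partial_x u_{n,1} \to \partial_x u_1$ and $u_{n,2} \to u_2$ then follow by using the two displayed relations together with the boundedness of the coefficients $\partial_x v_c/v_c$, $cv_c^2/\mu_c$, and $2cv_c\partial_xv_c/(\mu_c(1-v_c^2))$. The limit satisfies $\int_\R[(\partial_x u_1)^2 + u_1^2 + u_2^2]e^{-2|x|}\,dx = 1$.

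Passing to the limit in the two relations identifies $\partial_x u_1 = (\partial_x v_c/v_c) u_1$, so $u_1 = \alpha v_c$ for some $\alpha \in \R$; substituting in the second relation and using the algebraic identity $v_c^2 + 2(\partial_x v_c)^2/(1 - v_c^2) = \mu_c/(1 - v_c^2)$ then yields $u_2 = \alpha c v_c/(1 - v_c^2) = \alpha w_c$. Thus $u = \alpha Q_c$, in agreement with (\ref{ker-Gc}). Since $\chi_c$ is an eigenfunction of the self-adjoint operator $\boH_c$ associated with the isolated negative eigenvalue $-\tilde{\lambda}_c$, a standard Agmon-type estimate applied to $\boH_c$ (whose asymptotic coefficients at $\pm\infty$ are constant) gives exponential decay of $\chi_c$ at a rate strictly greater than $1$, so that $S\chi_c$ belongs to the dual $L^2(e^{2|x|}dx)$ of the weighted space. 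The orthogonality $\langle u_n, S\chi_c\rangle = 0$ therefore passes to the strong limit, giving $\alpha \langle Q_c, S\chi_c\rangle = 0$; by (\ref{eq:angle}) we conclude $\alpha = 0$, whence $u = 0$, contradicting $\|u\|_w = 1$.

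The continuous dependence of $\Lambda_c$ on $c \in (-1, 1) \setminus \{0\}$ follows from the same argument applied along any convergent subsequence $c_n \to c_0$, using that $v_c$, $w_c$, $\mu_c$, $\chi_c$, and $\boH_c$ all depend continuously on $c$ in the relevant function spaces. The main obstacle is the strong compactness step: the target weight $e^{-2|x|}$ by itself does not produce a compact Sobolev embedding (bounded sequences in $H^1(e^{-2|x|} dx)$ can concentrate at infinity), and one must carefully exploit the strict dominance of the weights in $G_c$ to rule out such tail concentration for the minimizing sequence. A secondary delicate point is verifying that $\chi_c$ decays strictly faster than $e^{-|x|}$, which is precisely what allows the orthogonality condition to survive passage to the limit.
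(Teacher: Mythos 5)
Your overall strategy (contradiction plus compactness, exploiting the sum-of-squares representation \eqref{eq:loc-virial}) is workable in spirit, but there is a genuine gap at the single point where the hypothesis \eqref{eq:ortho-u} enters. You pass $\langle u_n, S\chi_c\rangle_{L^2(\R)^2}=0$ to the limit of a sequence that converges only in $L^2(e^{-2|x|}dx)$, and to justify this you assert that $\chi_c$ decays exponentially at a rate strictly greater than $1$, so that $S\chi_c\in L^2(e^{2|x|}dx)$. That assertion is false in general. The decay of $\chi_c$ is governed by two rates: the rate $b_c$ with $b_c^2=\big((1+\tilde{\lambda}_c)^2-c^2\big)/(1+\tilde{\lambda}_c)$ coming from the constant-coefficient limit of $\boH_c$ at infinity, and the rate $2\sqrt{1-c^2}$ of the source terms $\boO(v_c^2)$; Lemma \ref{lem-chi-c} only yields $|\chi_c|\leq A_c e^{-a_c|x|}$ with $a_c=\min\{2\sqrt{1-c^2},b_c\}>\sqrt{1-c^2}$. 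Both rates degenerate as $|c|\to 1$ (already $2\sqrt{1-c^2}<1$ once $c^2>3/4$), so for such $c$ the linear functional $u\mapsto\langle u,S\chi_c\rangle_{L^2(\R)^2}$ is not continuous on bounded sets of $L^2(e^{-2|x|}dx)$; along your normalized sequence the pairing $\langle u_n,S\chi_c\rangle$ is not even uniformly controlled, and the conclusion $\alpha\langle Q_c,S\chi_c\rangle=0$ cannot be extracted. Since this is the only step that uses the orthogonality, the contradiction collapses.

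The repair is essentially what the paper does: run the argument in the natural weight $v_c^2\sim e^{-2\sqrt{1-c^2}|x|}$ rather than $e^{-2|x|}$, i.e.\ substitute $v=v_c u$ and prove coercivity of $K_c(v)=G_c(u)$ in the \emph{unweighted} norm of $v$ (the paper does this without compactness, by reducing to a self-adjoint operator $\boT_c$ whose essential spectrum is computed via the Weyl criterion to be $[\tau_c,+\infty)$ with $\tau_c>0$). In these variables the orthogonality reads $\langle v, v_c^{-1}S\chi_c\rangle=0$ and only requires $v_c^{-1}\chi_c\in L^2(\R)$, which is exactly what $a_c>\sqrt{1-c^2}$ provides; the weight $e^{-2|x|}$ is recovered only at the last line from $v_c\geq A_c e^{-|x|}$. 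A secondary point: your claim that tail concentration "would necessarily make $G_c(u_n)$ blow up" is correct but is the entire analytic content of the proposition (it is equivalent to the strict positivity of the bottom of the essential spectrum of $\boT_c$), and you leave it unproven; it requires a quantitative tail bound, for instance by integrating $\partial_x(u_1/v_c)=f/v_c$ with $f=\partial_x u_1-(\partial_x v_c/v_c)u_1$ and using that $v_c^{-2}e^{-2|x|}$ is integrable because $\sqrt{1-c^2}<1$.
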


Coming back to \eqref{eq:pignon}, we can prove

\begin{prop}
\label{prop:monou2}
There exists a positive number $B_*$, depending only on $\gc$, such that
\begin{equation}
\label{eq:moyenplateau}
\begin{split}
\frac{d}{dt} \Big( \big\langle M_{c^*(t)} u^*(\cdot, t), u^*(\cdot, t) \big\rangle_{L^2(\R)^2} \Big) \geq & \frac{1}{B_*} \int_\R \big[ (\partial_x u_1^*)^2 + (u_1^*)^2 + (u_2^*)^2 \big] (x, t) e^{- 2 |x|} \, dx\\
& - B_* \big\| \eps^*(., t) \big\|_{X(\R)}^\frac{1}{2} \big\| u^*(\cdot, t) \big\|_{X(\R)}^2,
\end{split}
\end{equation}
for any $t \in \R$.
\end{prop}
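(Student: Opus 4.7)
The plan is to differentiate $\langle M_{c^*(t)} u^*, u^*\rangle_{L^2}$ in time, substitute the evolution equation \eqref{eq:pouru*} for $u^*$, identify $G_{c^*(t)}(u^*)$ as the leading term through Lemma \ref{lem:loc-coer}, invoke the coercivity of Proposition \ref{prop:coer-Gc}, and then dominate every other contribution by $B_*\|\eps^*\|_X^{1/2}\|u^*\|_X^2$. Since $M_c$ is symmetric, the derivative decomposes as
$$\frac{d}{dt}\langle M_{c^*(t)} u^*, u^*\rangle_{L^2} = (c^*)'(t)\langle \partial_c M_{c^*(t)} u^*, u^*\rangle_{L^2} + 2\langle M_{c^*(t)} u^*, \partial_t u^*\rangle_{L^2}.$$
Inserting \eqref{eq:pouru*} and using $JS = -2\partial_x$ (a consequence of $S^2 = I$), the leading dynamical contribution is
$$2\langle M_{c^*} u^*, S\boH_{c^*}(-2\partial_x u^*)\rangle_{L^2} = 2\langle SM_{c^*} u^*, \boH_{c^*}(-2\partial_x u^*)\rangle_{L^2} = G_{c^*(t)}(u^*(\cdot,t)),$$
by Lemma \ref{lem:loc-coer}.

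Next I would apply the coercivity. The second identity in \eqref{eq:orthobis}, combined with $\boH_{c^*}\chi_{c^*} = -\tilde{\lambda}_{c^*}\chi_{c^*}$, is equivalent to $\langle u^*, S\chi_{c^*}\rangle_{L^2} = 0$, as already noted in \eqref{eq:ortho-u*}. Hence Proposition \ref{prop:coer-Gc} applies to $u^*(\cdot,t)$. Using the fact that $c^*(t)$ remains in a compact subset of $(-1,1)\setminus\{0\}$ by \eqref{eq:lvmh1bis} together with the continuous dependence of $\Lambda_c$ on $c$, a uniform lower bound
$$G_{c^*(t)}(u^*(\cdot,t)) \geq \Lambda_\gc\int_\R\big[(\partial_x u_1^*)^2 + (u_1^*)^2 + (u_2^*)^2\big](x,t)e^{-2|x|}\,dx$$
follows, which is exactly the first term on the right-hand side of \eqref{eq:moyenplateau}, up to renaming the constant.

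The lower-order contributions from the time dependence of $M_{c^*(t)}$ and from the modulation terms in \eqref{eq:pouru*} (namely those proportional to $(c^*)'(t)$ and to $(a^*)'(t) - c^*(t)$) are all of the form $\mu(t)\cdot B(\eps^*, u^*)$, where $|\mu(t)|\leq A_\gc\|\eps^*(\cdot,t)\|_X$ by \eqref{eq:modul0bis} and $B$ is a bilinear expression bounded by $C_\gc(\|u^*\|_X^2 + \|u^*\|_X\|\eps^*\|_X)$ after exploiting the $L^\infty$ boundedness of $M_{c^*}$, $\partial_c M_{c^*}$, $Q_{c^*}$, $\partial_c Q_{c^*}$, and the smoothness of $\eps^*$ from Proposition \ref{prop:smooth}. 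This yields a control of the order $C_\gc\|\eps^*\|_X\|u^*\|_X^2$, which is rewritten as $C_\gc\beta_\gc^{1/2}\|\eps^*\|_X^{1/2}\|u^*\|_X^2$ by invoking the smallness $\|\eps^*\|_X\leq A_\gc\beta_\gc$ from Theorem \ref{thm:orbistab}.

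The main obstacle is the super-quadratic term $2\langle M_{c^*} u^*, S\boH_{c^*}(J\boR_{c^*}\eps^*)\rangle_{L^2}$ coming from the nonlinear remainder $\boR_{c^*}\eps^* = E'(Q_{c^*}+\eps^*) - E'(Q_{c^*}) - E''(Q_{c^*})\eps^*$, which is quadratic in $\eps^*$ at leading order. I would integrate by parts to transfer the derivatives carried by $\boH_{c^*}$ and $J$ onto $M_{c^*}u^*$, then use the pointwise estimate $|\boR_{c^*}\eps^*|\lesssim |\eps^*|^2 + |\eps^*||\partial_x\eps^*|$ together with the Gagliardo-Nirenberg interpolation $\|\eps^*\|_{L^\infty}\lesssim \|\eps^*\|_{L^2}^{1/2}\|\partial_x\eps^*\|_{L^2}^{1/2}$. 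One factor of $\eps^*$ is then absorbed into $u^*$ through the defining identity $u^* = S\boH_{c^*}\eps^*$, and the remaining half-power from the interpolation produces the desired factor $\|\eps^*\|_X^{1/2}$. Collecting all contributions yields \eqref{eq:moyenplateau}.
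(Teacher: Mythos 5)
Your overall architecture matches the paper's: differentiate $\langle M_{c^*(t)}u^*,u^*\rangle_{L^2(\R)^2}$, identify the leading contribution with $G_{c^*(t)}(u^*)$ via Lemma \ref{lem:loc-coer} (using $JS=-2\partial_x$), apply the coercivity of Proposition \ref{prop:coer-Gc} under the orthogonality \eqref{eq:ortho-u*}, and estimate the remaining terms. However, there is a genuine gap in your treatment of the modulation term
$$\boJ_4^*(t) := 2\,(c^*)'(t)\,\big\langle S M_{c^*} u^*, \boH_{c^*}(\partial_c Q_{c^*}) \big\rangle_{L^2(\R)^2},$$
which you lump in with the other ``lower-order contributions''. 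Unlike those, the pairing here is \emph{linear} in $u^*$ and involves no factor of $\eps^*$, so your generic scheme only yields $|\boJ_4^*| \leq A_\gc \|\eps^*\|_{X(\R)}\|u^*\|_{X(\R)}$. Since the only available comparison is $\|\eps^*\|_{X(\R)} \leq A_\gc \|u^*\|_{X(\R)}$ from \eqref{eps:u} (there is no reverse inequality with a power gain), this is of size $\|u^*\|_{X(\R)}^2$ with a constant that is not small. It can therefore neither be written as $B_*\|\eps^*\|_{X}^{1/2}\|u^*\|_{X}^2$ nor be absorbed by the localized coercive term $\int_\R e^{-2|x|}[\cdots]$, and it would also defeat the absorption by $\frac{1-\gc^2}{16}\|u^*\|_X^2$ in Corollary \ref{cor:bomono}.

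The paper disposes of this term by an exact algebraic cancellation rather than an estimate: since $\boH_{c^*}(\partial_c Q_{c^*}) = P'(Q_{c^*}) = SQ_{c^*}$ and $M_{c^*}$ was chosen precisely so that $M_{c^*}Q_{c^*} = S\partial_x Q_{c^*}$, one has
$$\big\langle S M_{c^*} u^*, \boH_{c^*}(\partial_c Q_{c^*}) \big\rangle_{L^2(\R)^2} = \big\langle u^*, S\partial_x Q_{c^*} \big\rangle_{L^2(\R)^2} = \big\langle \eps^*, \boH_{c^*}(\partial_x Q_{c^*}) \big\rangle_{L^2(\R)^2} = 0,$$
because $\partial_x Q_{c^*} \in \Ker(\boH_{c^*})$. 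As the paper remarks, this identity is exactly what allows one to avoid proving a quadratic dependence of $(c^*)'(t)$ on $\eps^*$ (the other standard way out, which you also do not establish). Your argument needs one of these two ingredients; as written it has neither. The rest of your proposal (the coercivity step, the $\|\eps^*\|_X^{1/2}$ gain on the $\boR_{c^*}$ term via Gagliardo--Nirenberg and $\|\eps^*\|_X \leq A_\gc\|u^*\|_X$, and the remaining modulation terms, which genuinely are bilinear in $(\eps^*,u^*)$) is consistent with the paper's proof.
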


Using Propositions \ref{prop:monou1} and \ref{prop:monou2}, we claim 

\begin{cor}
\label{cor:bomono}
Set
$$N(t) := \frac{1}{2} \begin{pmatrix} 0 & x \\ x & 0 \end{pmatrix} + A_* B_* e^{2 R_*} M_{c^*(t)}.$$
There exists a positive constant $\boA_\gc$ such that we have 
\begin{equation}
\label{eq:grandplateau}
\frac{d}{dt} \Big( \langle N(t) u^*(\cdot, t), u^*(\cdot, t) \rangle_{L^2(\R)^2} \Big) \geq \boA_\gc \big\| u^*(\cdot, t) \big\|_{X(\R)}^2,
\end{equation}
for any $t \in \R$. Since
\begin{equation}
\label{eq:petitpignon}
\int_{- \infty}^{+ \infty} \big\| u^*(\cdot, t) \big\|_{X(\R)}^2 \, dt < + \infty,
\end{equation}
there exists a sequence $(t_k^*)_{k \in \N}$ such that
\begin{equation}
\label{eq:derailleur}
\lim_{k \to + \infty} \big\| u^*(\cdot, t_k^*) \big\|_{X(\R)}^2 = 0.
\end{equation}
\end{cor}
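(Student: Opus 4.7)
The plan is to combine the two monotonicity estimates of Propositions \ref{prop:monou1} and \ref{prop:monou2} with the right weights to cancel the unwanted local term on the right-hand side of \eqref{eq:petitplateau}, and then to absorb the remaining bad term using the smallness of $\|\eps^*\|_{X(\R)}$ ensured by the refined orbital stability. The key observation is that
$$\big\| u^*(\cdot,t) \big\|_{X(B(0,R_*))}^2 \leq e^{2R_*} \int_\R \big[ (\partial_x u_1^*)^2 + (u_1^*)^2 + (u_2^*)^2 \big] (x,t)\, e^{-2|x|} \, dx,$$
so multiplying \eqref{eq:moyenplateau} by $A_* B_* e^{2R_*}$ and adding it to \eqref{eq:petitplateau} makes the localized term $-A_*\|u^*\|_{X(B(0,R_*))}^2$ cancel against the positive weighted gradient term, and one obtains, with $N(t)$ as defined in the corollary, the inequality
$$\frac{d}{dt}\Big(\langle N(t) u^*, u^*\rangle_{L^2(\R)^2}\Big) \geq \frac{1-\gc^2}{16}\big\| u^*(\cdot,t)\big\|_{X(\R)}^2 - A_* B_*^2 e^{2R_*} \big\|\eps^*(\cdot,t)\big\|_{X(\R)}^{1/2}\big\| u^*(\cdot,t)\big\|_{X(\R)}^2.$$

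Next, I would invoke \eqref{eq:modul0bis} together with \eqref{eq:encorezero}--\eqref{eq:encoreune} to ensure that by further shrinking $\beta_\gc$ if necessary, the factor $A_* B_*^2 e^{2R_*} \|\eps^*(\cdot,t)\|_{X(\R)}^{1/2}$ becomes at most $(1-\gc^2)/32$ uniformly in $t$. This yields \eqref{eq:grandplateau} with, e.g., $\boA_\gc = (1-\gc^2)/32$.

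To deduce \eqref{eq:petitpignon}, I would observe that $\langle N(t) u^*(\cdot,t), u^*(\cdot,t)\rangle_{L^2(\R)^2}$ is bounded uniformly in $t$. Indeed, Proposition \ref{prop:smooth} provides uniform exponential decay of $\eps^*$ and its derivatives in moving frame, and since $u^* = S\boH_{c^*(t)}(\eps^*)$ is a second-order linear differential expression in $\eps^*$ with coefficients depending smoothly on $v_{c^*(t)}$ and its derivatives (all of which are bounded thanks to \eqref{eq:lvmh1bis}), the pair $u^*(\cdot + a^*(t),t)$ is itself exponentially decaying with $t$-uniform bounds. Consequently $\int_\R |x| (u_1^*)^2 (u_2^*)^2$ and $\langle M_{c^*(t)} u^*, u^*\rangle$ are uniformly bounded in $t$, which together with \eqref{eq:grandplateau} gives
$$\boA_\gc \int_{-T}^{T}\|u^*(\cdot,t)\|_{X(\R)}^2\,dt \leq \langle N(T) u^*(T), u^*(T)\rangle - \langle N(-T) u^*(-T), u^*(-T)\rangle \leq C_\gc,$$
uniformly in $T$. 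Letting $T \to +\infty$ yields \eqref{eq:petitpignon}, and the existence of a sequence $(t_k^*)_{k\in\N}$ with $\|u^*(\cdot,t_k^*)\|_{X(\R)}^2 \to 0$ is then immediate from the finiteness of the integral of a non-negative continuous function over $\R$.

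The only delicate point is the uniform bound on $\langle N(t) u^*, u^*\rangle$: one needs the factor $x$ in the first block of $N$ to be controlled by the exponential decay of $u^*$, which is exactly what Proposition \ref{prop:smooth} provides, so this is a straightforward consequence once the smoothness and localization of the limit profile have been established. The substantive content is therefore the algebraic combination of the two monotonicity formulas with the correct exponentially-weighted coefficient $A_* B_* e^{2R_*}$, together with the quantitative absorption of the super-quadratic error term.
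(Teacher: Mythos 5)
Your proposal is correct and follows essentially the same route as the paper: the same weighted combination of \eqref{eq:petitplateau} and \eqref{eq:moyenplateau} using the coefficient $A_* B_* e^{2R_*}$ and the pointwise bound $e^{-2|x|} \geq e^{-2R_*}$ on $B(0,R_*)$, the same absorption of the term $A_* B_*^2 e^{2R_*}\|\eps^*\|_{X(\R)}^{1/2}\|u^*\|_{X(\R)}^2$ by shrinking $\beta_\gc$ via \eqref{eq:modul0bis}, and the same use of the uniform exponential decay of $u^*$ (the paper cites \eqref{int-der-u*-exp}, which is precisely the consequence of Proposition \ref{prop:smooth} you invoke) together with the boundedness of $M_{c^*(t)}$ to control $\langle N(t)u^*,u^*\rangle$ uniformly and integrate. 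The only blemish is the typo $\int_\R |x|(u_1^*)^2(u_2^*)^2$, which should read $\int_\R |x|\,|u_1^*|\,|u_2^*|$; the argument is otherwise the paper's.
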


In view of \eqref{eq:orthobis}, \eqref{def:u*} and the bound for $\boH _{c^*}$ in \eqref{inv:Hc}, we have 
\begin{equation}
\label{eps:u}
\big\| \eps^*(\cdot, t) \|_{X(\R)} \leq A_\gc \big\| u^*(\cdot, t) \big\|_{X(\R)},
\end{equation}
Hence, we can apply \eqref{eq:derailleur} and \eqref{eps:u} in order to obtain
\begin{equation}
\label{eq:derailleur2}
\lim_{k \to + \infty} \big\| \eps^*(\cdot, t_k^*) \big\|_{X(\R)}^2 = 0.
\end{equation}
By \eqref{eq:derailleur2} and the orbital stability in Theorem \ref{thm:orbistab},
this yields

\begin{cor}
\label{cor:cayest}
We have
$$\eps_0^* \equiv 0.$$
\end{cor}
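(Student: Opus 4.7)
The plan is to exploit Corollary \ref{cor:bomono}, which provides a sequence of times along which $\| u^*(\cdot, t_k^*) \|_{X(\R)} \to 0$. Combined with the bound \eqref{eps:u}, this immediately gives \eqref{eq:derailleur2}, namely $\| \eps^*(\cdot, t_k^*) \|_{X(\R)} \to 0$. The goal is then to propagate this smallness from the discrete times $t_k^*$ to every $t \in \R$, and in particular to $t = 0$, via the orbital stability result of Theorem \ref{thm:orbistab}.

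For this propagation step, I would view $(v^*, w^*)$ as the unique solution of \eqref{HLL} starting from its value at time $t_k^*$. By Step~1 and \eqref{def:eps*}, for $k$ large enough the initial datum at $t_k^*$ lies within $\alpha_{c^*(t_k^*)}$ of the soliton $Q_{c^*(t_k^*), a^*(t_k^*)}$. The uniform control \eqref{eq:lvmh1bis} keeps $c^*(t_k^*)$ in a fixed compact subset of $(-1,1)\setminus\{0\}$, so the constants $\alpha_c$ and $A_c$ appearing in Theorem \ref{thm:orbistab} are bounded independently of $k$. Applying Theorem \ref{thm:orbistab} to this re-initialised problem produces a modulation satisfying \eqref{eq:ortho}; by the local uniqueness of such a modulation (coming from the implicit function theorem applied to \eqref{eq:ortho} in a small tube around the soliton family), it must coincide with $(\eps^*, c^*, a^*)$. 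The bound \eqref{eq:modul0} then yields $\| \eps^*(\cdot, t) \|_{X(\R)} \leq A_\gc \, \| \eps^*(\cdot, t_k^*) \|_{X(\R)}$ for every $t \in \R$ and every sufficiently large $k$. Letting $k \to +\infty$ forces $\eps^*(\cdot, t) \equiv 0$ on $\R$.

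It remains to convert $\eps^*(\cdot, 0) = 0$ into $\eps_0^* = 0$. The identity $\eps^*(\cdot, 0) = 0$ means $(v_0^*, w_0^*) = Q_{c^*(0), a^*(0)}$, so $\eps_0^* = Q_{c^*(0), a^*(0)} - Q_{c_0^*}$. Passing to the limit in the orthogonality conditions \eqref{eq:ortho} at $t = t_n$, by combining the weak convergence \eqref{eq:assump1} with the strong $L^2$-convergence $\partial_x Q_{c(t_n)} \to \partial_x Q_{c_0^*}$ and $\chi_{c(t_n)} \to \chi_{c_0^*}$ (consequence of \eqref{eq:assump2} and the smooth dependence of solitons on the speed parameter), I obtain $\langle \eps_0^*, \partial_x Q_{c_0^*} \rangle_{L^2(\R)^2} = \langle \eps_0^*, \chi_{c_0^*} \rangle_{L^2(\R)^2} = 0$. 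Since by \eqref{eq:encoreune} the norm $\| \eps_0^* \|_{X(\R)}$ can be made arbitrarily small by shrinking $\beta_\gc$, the local invertibility of the map $(c, a) \mapsto Q_{c, a}$ near $(c_0^*, 0)$—which is precisely non-degenerate under these two orthogonality conditions—forces $(c^*(0), a^*(0)) = (c_0^*, 0)$, and therefore $\eps_0^* = 0$.

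The main obstacle I anticipate is the uniform-in-$k$ propagation in the second paragraph. It rests on two ingredients that must fit together: the uniform version of Theorem \ref{thm:orbistab} across the compact range of speeds given by \eqref{eq:lvmh1bis}, and the uniqueness of the modulation satisfying \eqref{eq:ortho}, so that the modulation produced by Theorem \ref{thm:orbistab} applied at time $t_k^*$ is genuinely the same $(c^*, a^*)$ originally used in \eqref{def:eps*}. Once these are secured, the final passage from $\eps^*(\cdot, 0) = 0$ to $\eps_0^* = 0$ is a routine application of the implicit function theorem for the modulation parametrisation.
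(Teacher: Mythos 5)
Your proposal is correct and follows essentially the same route as the paper, which derives Corollary~\ref{cor:cayest} directly from \eqref{eq:derailleur2} together with the orbital stability of Theorem~\ref{thm:orbistab}. The paper leaves the details implicit (re-initialisation at the times $t_k^*$, uniformity of the constants over the compact range of speeds, uniqueness of the modulation, and the final identification $(c^*(0),a^*(0))=(c_0^*,0)$ via the orthogonality conditions), and you have correctly supplied exactly those details.
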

At this stage we obtain \eqref{conv-stab-asymp} for some subsequence. We should extend this result for any sequence. The proof is exactly the same as the one done by Béthuel, Gravejat and Smets in \cite{BetGrSm2} (see Subsection 1.3.4 in \cite{BetGrSm2} for the details).
\subsection{Proof of Theorem \ref{thm:stabasympt_m}}
We choose a positive number $\delta_\gc$ such that $\| (v_0, w_0) -Q_\gc \|_{X(\R)} \leq \beta_\gc$, whenever $d_\boE(m^0, u_\gc) \leq \delta_\gc$. We next apply Theorem \ref{thm:stabasympt} to the solution $(v, w) \in \boC^0(\R, \boN \boV(\R))$ to \eqref{HLL} corresponding to the solution $m$ to \eqref{LL}. This yields the existence of a speed $\gc^*$ and a position function $b$ such that the convergences in Theorem \ref{thm:stabasympt} hold. In particular, since the weak convergence for $m_3$ is satisfied by Theorem \ref{thm:stabasympt}, it is sufficient to show the existence of a phase function $\theta$ such that $\exp(i\theta(t)) \partial_x\check{m}(\cdot+b(t),t)$ is weakly convergent to $\partial_x \check{u}_{\gc^*}$ in $L^2(\R)$ as $t \to \infty$. The locally uniform convergence of $\exp(i\theta(t)) \check{m}(\cdot+b(t),t)$ towards $\check{u}_{\gc^*}$ then follows from the Sobolev embedding theorem. We begin by constructing this phase function.

We fix a non-zero function $\chi \in \boC_c^\infty(\R, [0, 1])$ such that $\chi$ is even. Using the explicit formula of $\check{u}_{\gc^*}$, we have
\begin{equation}
\label{int_check_u_gc_chi_c}
\int_\R \check{u}_{\gc^*}(x) \chi(x) \, dx = 2 \gc^* \int_\R \frac{\chi(x)}{\cosh\big(\sqrt{1- (\gc^*)^2} x \big)} \, dx \neq 0.
\end{equation}
Decreasing the value of $\beta_\gc$ if needed, we deduce from the orbital stability in \cite{DeLGr} that
\begin{equation}
\label{est:int_m^check_chi}
\bigg|\int_\R \check{m}(x + b(t), t) \chi(x) \, dx \bigg| \geq |\gc^*| \int_\R \frac{\chi(x)}{\cosh\big(\sqrt{1- (\gc^*)^2} x \big)} \, dx \neq  0,
\end{equation}
for any $t \in \R$.

Let $\Upsilon : \R ^2 \longrightarrow \R$ be the $\boC ^1$ function defined by
$$\Upsilon(t,\theta):= \Im\Big( e^{-i\theta} \int_\R \check{m}(x + b(t), t) \chi(x) \, dx \Big).$$
From \eqref{est:int_m^check_chi} we can find a number $\theta_0$ such that $\Upsilon(0,\theta_0)=0$ and $\partial_\theta \Upsilon(0,\theta_0) > 0$. Then, using the implicit function theorem, there exists a $\boC^1$ function $\theta : \R \to \R$ such that $\Upsilon(t,\theta(t))=0.$ In addition, using \eqref{est:int_m^check_chi} another time, we can fix the choice of $\theta$ so that there exists a positive constant $A_{\gc^*}$ such that
\begin{equation}
\label{partial-theta-Upsilon}
\partial_\theta \Upsilon(t,\theta(t)) = \Re\Big( e^{-i\theta(t)} \int_\R \check{m}(x + b(t), t) \chi(x) \, dx \Big) \geq A_{\gc^*} >0.
\end{equation}
This implies, differentiating the identity $\Upsilon(t,\theta(t))=0$ with respect to $t$, that
\begin{equation}
\label{eq:theta'}
|\theta'(t)| = \Big| \frac{\partial_t \Upsilon(t,\theta(t))}{\partial_\theta\Upsilon(t,\theta(t))}\Big| \leq \frac{1}{A_{\gc^*}} \big|\partial_t \Upsilon(t,\theta(t)) \big|,
\end{equation}
for all $t \in \R$. Now, we differentiate the function $\Upsilon$ with respect to $t$, and we use the equation of $\check{m}$ to obtain 
\begin{equation}
\label{derv_t-Upsilon}
\begin{split}
\partial_t \Upsilon(t,\theta(t)) =& \Im\Big( e^{-i\theta} \int_\R \chi(x) \big( \partial_x \check{m}(x + b(t), t) b'(t) -i m_3(x + b(t), t)   \partial_{xx}\check{m}(x + b(t), t) \\
                     & + i \check{m}(x + b(t), t)   \partial_{xx}m_3(x + b(t), t) -i m_3(x + b(t), t)  \check{m}(x + b(t), t) \big) \, dx \Big).
\end{split}
\end{equation}
Since $b \in \boC_b^1(\R, \R)$, and since both $\partial_x \check{m}$ and $\partial_t \check{m}$ belong to $\boC_b^0(\R, H^{- 1}(\R))$, it follows that the derivative $\theta '$ is bounded on $\R$.

We denote by $\varphi$ the phase function defined by
$$\varphi(x+b(t),t):= \varphi(b(t),t) + \int_0^x w(y+b(t),t) \, dy,$$
with $\varphi(b(t),t) \in [0,2\pi],$ which is associated to the function $\check{m}(x+b(t),t)$ for any $(x,t) \in \R^2$  in the way that
$$\check{m}(x+b(t),t)= \big(1-m^2_3(x+b(t),t)\big)^\frac{1}{2} \exp\big(i\varphi(x+b(t),t)\big).$$
It is sufficient to prove that
\begin{equation}
\label{conv:theta0}
\exp\Big( i \big(\varphi(b(t),t)-\theta(t)\big)\Big) \longrightarrow 1,
\end{equation}
as $t \to \infty$ to obtain 
$$\exp\Big( i \big(\varphi(\cdot+b(t),t) - \theta(t)\big)\Big) \longrightarrow \exp \big(i\varphi_{\gc^*}(\cdot)\big):= \exp\Big( i\int_0^\cdot w_{\gc^*}(y) \, dy\Big) \quad {\rm in} \quad L^\infty_{loc}(\R),$$
as $t \to \infty$. This implies, using Theorem \ref{thm:stabasympt} once again, and the Sobolev embedding theorem, that
\begin{equation}
\label{eq:conv:m-check}
\begin{array}{lcll}
e^{- i \theta(t)} \partial_x \check{m}(\cdot + b(t), t) & \rightharpoonup & \partial_x \check{u}_{\gc^*} & {\rm in} \ L^2(\R),\\
e^{-i \theta(t)} \check{m}(\cdot + b(t), t) & \to & \check{u}_{\gc^*} & {\rm in} \ L_{\rm loc}^\infty(\R),
\end{array}
\end{equation}
as $t\to  \infty$.
Now, let us prove \eqref{conv:theta0}. We have
\begin{equation*}
\begin{split}
& e^{-i\theta(t)} \int_\R \check{m}(x + b(t), t) \chi(x) \, dx \\
& = \exp\big(i[\varphi(b(t),t)-\theta(t)]\big) \int_\R \big(1-m^2_3(x+b(t),t)\big)^\frac{1}{2} \exp\Big(i\int_0^x w(y+b(t),t) \, dy\Big)  \chi(x) \, dx.
\end{split}
\end{equation*}
We use the fact that $\Upsilon(t,\theta(t))=0$ to obtain
\begin{equation*}
\begin{split}
& \cos\big(\varphi(b(t),t)-\theta(t)\big) \Im\Big( \int_\R \Big(1-m^2_3(x+b(t),t)\big)^\frac{1}{2} \exp\big(i\int_0^x w(y+b(t),t) \, dy\Big)  \chi(x) \, dx \Big)\\
& + \sin\big(\varphi(b(t),t)-\theta(t)\big) \Re\Big( \int_\R \big(1-m^2_3(x+b(t),t)\big)^\frac{1}{2} \exp\Big(i\int_0^x w(y+b(t),t) \, dy\Big)  \chi(x) \, dx \Big)\\
&= 0.
\end{split}
\end{equation*}
On the other hand, by \eqref{partial-theta-Upsilon}, we have
\begin{equation*}
\begin{split}
& \cos\big(\varphi(b(t),t)-\theta(t)\big) \Re\Big( \int_\R \Big(1-m^2_3(x+b(t),t)\big)^\frac{1}{2} \exp\big(i\int_0^x w(y+b(t),t) \, dy\Big)  \chi(x) \, dx \Big)\\
& - \sin\big(\varphi(b(t),t)-\theta(t)\big) \Im\Big( \int_\R \big(1-m^2_3(x+b(t),t)\big)^\frac{1}{2} \exp\Big(i\int_0^x w(y+b(t),t) \, dy\Big)  \chi(x) \, dx \Big)\\
& > 0.
\end{split}
\end{equation*}
We derive from Theorem \ref{thm:stabasympt} and \eqref{int_check_u_gc_chi_c} that
$$ \Im\Big( \int_\R \big(1-m^2_3(x+b(t),t)\big)^\frac{1}{2} \exp\Big(i\int_0^x w(y+b(t),t) \, dy\Big)  \chi(x) \, dx \Big) \to \Im\Big( \int_\R \check{u}_{\gc^*} (x)  \chi(x) \, dx \Big) =0,$$
and
$$\Re\Big( \int_\R \big(1-m^2_3(x+b(t),t)\big)^\frac{1}{2} \exp\Big(i\int_0^x w(y+b(t),t) \, dy\Big)  \chi(x) \, dx \Big) \to \Re\Big( \int_\R \check{u}_{\gc^*} (x)  \chi(x) \, dx \Big) > 0.$$
This is enough to derive \eqref{conv:theta0}.

 Finally, we claim that $\theta'(t) \longrightarrow 0 $ as $t \to \infty$. Indeed, we can introduce \eqref{eq:conv:m-check} into \eqref{derv_t-Upsilon}, and we then obtain, using the equation satisfied by $\check{u}_{\gc^*},$ that
 $$\partial_t \Upsilon(t,\theta(t)) \longrightarrow 0,$$
as $t \to \infty$. By \eqref{eq:theta'}, this yields $\theta'(t) \longrightarrow 0 $ as $t \to \infty$, which finishes the proof of Theorem \ref{thm:stabasympt_m}.

\qed

\numberwithin{equation}{section}
\section{Proof of the orbital stability}
First, we recall the orbital stability theorem, which was established in \cite{DeLGr} (see Corollary 2, Propositions 2 and 4 in \cite{DeLGr}).
\begin{thm}
\label{thm:orbistab1}
Let $c \in (- 1, 1) \setminus \{ 0 \}$ and $(v_0,w_0) \in X(\R)$ satisfying \eqref{cond:alpha}. There exist a unique global solution $(v,w) \in \boC^0(\R, \boN\boV(\R))$ to \eqref{HLL} with initial datum $(v_0,w_0)$, and two maps $c_1 \in \boC^1(\R, (- 1, 1) \setminus \{ 0 \})$ and $a_1 \in \boC^1(\R, \R)$ such that the function $\eps_1$, defined by \eqref{def:eps}, satisfies the orthogonality conditions
\begin{equation}
\label{eq:ortho1}
\langle \eps_1(\cdot, t), \partial_x Q_{c_1(t)} \rangle_{L^2(\R)^2} = P'(Q_{c_1(t)})(\eps_1(\cdot, t)) = 0,
\end{equation}
for any $t \in \R$. Moreover, $\eps_1(\cdot,t)$, $c_1(t)$ and $a_1(t)$ satisfy \eqref{eq:max-v}, \eqref{eq:modul0} and \eqref{eq:modul1} for any $t\in \R$.
\end{thm}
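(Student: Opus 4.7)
The plan is to derive Theorem \ref{thm:orbistab1} from the local Cauchy theory for \eqref{HLL} in the non-vanishing space together with a standard modulation argument in the spirit of Grillakis--Shatah--Strauss \cite{GriShSt1}. First I would invoke local well-posedness in $\boN\boV(\R)$: since $(v_0,w_0)$ is close in $X(\R)$ to $Q_{c,a}$, the maximum of $v_0$ is bounded away from $1$, and a local solution extends in $\boC^0([0,T^*),\boN\boV(\R))$ as long as $v$ stays strictly below $1$ in $L^\infty$. The conservation of the energy \eqref{eq:hydro-E} and of the momentum \eqref{def:P} are the ingredients needed to convert local-in-time control into global control, and to produce the uniform bound \eqref{eq:max-v}.

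The modulation parameters $(a_1, c_1)$ are constructed by the implicit function theorem. For $(v,w)$ in a tube around the family $\{Q_{c,a}\}$ I would consider the map
$$\boF(v,w,a,c) := \bigl(\langle \eta(v,w,a,c),\partial_x Q_c\rangle_{L^2\times L^2},\ P'(Q_c)\bigl(\eta(v,w,a,c)\bigr)\bigr),$$
where $\eta(v,w,a,c) := (v(\cdot+a),w(\cdot+a)) - Q_c$. At the reference point $(v_c,w_c,0,c)$ the partial derivative of $\boF$ in $(a,c)$ is a $2\times 2$ matrix whose diagonal entries are $\|\partial_x Q_c\|_{L^2\times L^2}^2$ and $-\partial_c P(Q_c)$, while the off-diagonal entries vanish by parity of $Q_c$ against $\partial_x Q_c$. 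Since a direct computation from \eqref{form:vc} gives $\partial_c P(Q_c)\neq 0$ for $c\in(-1,1)\setminus\{0\}$, this Jacobian is invertible and the implicit function theorem yields unique $\boC^1$ functions $t\mapsto (a_1(t),c_1(t))$ realizing \eqref{eq:ortho1}, so long as the solution stays in the tube.

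The core orbital-stability estimate then follows from a Lyapunov argument. Expanding
$$(E+c_1 P)(v(\cdot+a_1),w(\cdot+a_1)) - (E+c_1 P)(Q_{c_1}) = \tfrac12\langle \boH_{c_1}\eps_1,\eps_1\rangle + O\!\left(\|\eps_1\|_{X(\R)}^3\right),$$
and using conservation of $E$ and $P$ along the flow, the proof of \eqref{eq:modul0} reduces to a coercivity estimate of the form $\langle \boH_{c_1}\eps_1,\eps_1\rangle \geq \Lambda_c \|\eps_1\|_{X(\R)}^2$ under the constraints $\langle \eps_1,\partial_x Q_{c_1}\rangle=0$ and $P'(Q_{c_1})(\eps_1)=0$. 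This is the classical Vakhitov--Kolokolov situation: since $\Ker(\boH_c)=\Span(\partial_x Q_c)$ and the unique negative eigenvalue of $\boH_c$ is simple, coercivity follows from the sign condition $d''(c)=-\partial_c P(Q_c)>0$, again verified from \eqref{form:vc}. A standard continuity/bootstrap argument globalizes the bound in time and, combined with the Sobolev embedding, yields \eqref{eq:max-v}.

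Finally, the derivative estimate \eqref{eq:modul1} arises by differentiating the orthogonality conditions \eqref{eq:ortho1} in time, substituting \eqref{HLL} for $\partial_t \eps_1$, and inverting the same $2\times 2$ Jacobian as above. The resulting algebraic system for $(a_1'-c_1, c_1')$ has right-hand side linear in $\eps_1$ modulo higher-order contributions, yielding $|c_1'(t)| + |a_1'(t)-c_1(t)| \leq A_\gc \|\eps_1(\cdot,t)\|_{X(\R)}$. I expect the main obstacle to be the coercivity step: one must verify that the second orthogonality $P'(Q_{c_1})(\eps_1)=0$ genuinely removes the negative direction of $\boH_{c_1}$ uniformly for $c_1$ in a neighbourhood of $c$, which is why the sign of $\partial_c P(Q_c)$ and the continuous dependence of the spectral data on $c$ are essential. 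Since Theorem \ref{thm:orbistab1} is explicitly framed as a specialization of Corollary 2 and Propositions 2 and 4 of \cite{DeLGr} to the single-soliton case, the task ultimately reduces to extracting the relevant estimates from the multi-soliton result therein.
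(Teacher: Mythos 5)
Your outline is the standard Grillakis--Shatah--Strauss scheme (local well-posedness in $\boN\boV(\R)$, modulation by the implicit function theorem with the orthogonality conditions \eqref{eq:ortho1}, coercivity of $\boH_{c}$ under those constraints via the Vakhitov--Kolokolov sign condition, and a bootstrap using conservation of $E$ and $P$), and this is essentially the argument behind the statement. Be aware, though, that the paper itself gives no proof of Theorem \ref{thm:orbistab1}: it is explicitly recalled from de Laire and Gravejat \cite{DeLGr} (Corollary 2, Propositions 2 and 4 there), where it is established along precisely the lines you describe, and the paper's own contribution in Section 3 only begins afterwards, with the change of decomposition from \eqref{eq:ortho1} to the conditions \eqref{eq:ortho} involving $\chi_c$.
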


With Theorem \ref{thm:orbistab1} at hand, we can provide the proof of Theorem \ref{thm:orbistab}.
\begin{proof}
We consider the following map
\begin{equation*}
\Xi((v,w), \sigma, \gb) := \Big( \langle \partial_x Q_{\sigma, \gb}, \eps \rangle_{L^2 \times L^2},  \langle \chi_{\sigma, \gb}, \eps \rangle_{L^2 \times L^2} \Big),
\end{equation*}
where we have set $\eps = (v,w) - Q_{\sigma, \gb},$ and $\chi_{\sigma, \gb}=\chi_{\sigma}(\cdot-\gb)$ (we recall that $\chi_{\sigma}$ is the eigenfunction associated to the unique negative eigenvalue $-\tilde{\lambda}_\sigma $ of the operator $\boH_{\sigma}$). The map $\Xi$ is well-defined for, and depends smoothly on, $(v,w) \in H^1(\R) \times L^2(\R)$, $\sigma \in (-1, 1)\setminus  \{0\}$, and $\gb \in \R$. 

We fix $t\in\R$. In order to simplify the notation, we substitute $(c_1(t),a_1(t))$ by $(c_1,a_1)$. We check that
$$\Xi(Q_{c_1, a_1}, c_1, a_1) = 0,$$
and we compute
$$\left\{ \begin{array}{l} \partial_{\sigma} \Xi_1(Q_{c_1, a_1}, c_1, a_1) = 0,\\
\partial_{\sigma} \Xi_2(Q_{c_1,a_1}, c_1, a_1) = -\langle \chi_{c_1, a_1},\partial_\sigma Q_{c_1,a_1}  \rangle_{L^2 \times L^2}. \end{array} \right.$$
Let $c \in (-1,1) \setminus \{0\}$ and suppose by contradiction that 
$$\langle \chi_c, \partial_c Q_c \rangle_{L^2 \times L^2} = 0.$$
Using the fact that $ \boH_c\big(\partial_c Q_c \big) = P' (Q_c),$ it comes
$$0=\langle \chi_c, \partial_c Q_c \rangle_{L^2 \times L^2} = - \frac{1}{\tilde{\lambda}_c} \langle \chi_c, \boH_c\big(\partial_c Q_c \big) \rangle_{L^2 \times L^2} = - \frac{1}{\tilde{\lambda}_c} \langle \chi_c, P' (Q_c) \rangle_{L^2 \times L^2}.$$ Since $\boH_c$ is self-adjoint, we also have
$$ \langle \chi_c, \partial_x Q_c \rangle_{L^2 \times L^2} = 0.$$ By Proposition 1 in \cite{DeLGr}, we infer that
$$0 > - \tilde{\lambda}_c \| \chi_c \|_{L^2 \times L^2}^2  = \langle \chi_c, \boH_c (\chi_c) \rangle_{L^2 \times L^2} \geq \Lambda_c \| \chi_c \|_{L^2 \times L^2}^2 >0,$$
which provides the contradiction and shows that
\begin{equation}
\label{orth:dcqc-chi_c}
\langle \chi_c, \partial_c Q_c \rangle_{L^2 \times L^2} \neq 0,
\end{equation}
for all $c \in (-1,1) \setminus \{0\}.$ In addition, we have 
$$\left\{ \begin{array}{l} \partial_{b} \Xi_1(Q_{c_1, a_1}, c_1, a_1) = \big\| \partial_x Q_{c_1} \big\|_{L^2}^2 = 2 (1 - c_1^2)^\frac{1}{2} > 0,\\
\partial_{b} \Xi_2(Q_{c_1, a_1}, c_1, a_1) = 0. \end{array} \right.$$
Therefore, the matrix 
$$d_{\sigma, b} \Xi(Q_{c_1, a_1}, c_1, a_1) = \begin{pmatrix}  0 & \langle \chi_{c_1, a_1},\partial_\sigma Q_{c_1,a_1}  \rangle_{L^2 \times L^2} \\ 2 (1 - c_1^2)^\frac{1}{2} & 0 \end{pmatrix}$$ 
is an isomorphism from $\R^{2}$ to $\R^{2}$.

Then, we can apply the version of the implicit function theorem in \cite{BetGrSm1} in order to find a neighbourhood $\boV$ of $ Q_{c_1,a_1}$, a neighbourhood $\boU$ of $(c_1,a_1)$, and a map $\gamma_{c_1,a_1} : \boU \rightarrow \boV$ such that 
$$\Xi((v,w),\sigma,\gb) = 0 \Leftrightarrow (c(v,w),a(v,w)) := (\sigma,\gb) = \gamma_{c,a}(v,w) \quad  \forall (v , w) \in \boV, \ \ \forall (\sigma,\gb) \in \boU.$$ 
In addition, there exists a positive constant $\Lambda$, depending only on $c_1$ such that
\begin{equation}
\label{est:ca}
\|\eps(t)\|_{X} + |c(t)-c_1(t)| + |a(t)-a_1(t)| \leq \Lambda \|\eps_1(t)\|_{X} \leq \Lambda_{c_1} A_\gc \alpha_0,
\end{equation}
where $c(t):=c(v(t),w(t))$, $a(t):=a(v(t),w(t))$ and $\eps(t) := (v(t),w(t)) - Q_{c(t),a(t)},$ for any fixed $t\in \R$. Using the fact that $(v(t),w(t))$ stays into a neighbourhood of $Q_{c_1(t),a_1(t)}$ for all $t\in \R$ by Theorem \ref{thm:orbistab1}, and also the fact that $c_1$ satisfies \eqref{eq:modul0}, we are led to the following lemma.
\begin{lem}
\label{lem:cond-orth-t-2}
Under the assumptions of Theorem \ref{thm:orbistab1}, there exists a unique pair of functions $(a,c) \in \boC^0 \big( \R , \R^2 \big)$ such that
$$\eps(t) := (v(t),w(t))- Q_{c(t),a(t)},$$
verifies the two following orthogonality conditions
\begin{equation}
\label{cond:orth:tr}
\big\langle \eps(t), \partial_x Q_{c(t),a(t)}\big\rangle_{L^2 \times L^2} = \langle \chi_{c(t), a(t)}, \eps(t) \rangle_{L^2 \times L^2} = 0.
\end{equation}
Moreover, we have \eqref{eq:modul0}.
\end{lem}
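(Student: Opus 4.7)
The plan is to apply the local implicit function theorem argument sketched immediately above the statement pointwise in $t \in \R$, and then upgrade pointwise existence and uniqueness to global continuity in $t$ by exploiting the uniqueness part of the theorem.

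First I would establish uniform smallness. By Theorem \ref{thm:orbistab1}, the modulation parameters $(c_1(t), a_1(t))$ from the Grillakis--Shatah--Strauss decomposition and the remainder $\eps_1(\cdot, t)$ are well-defined for every $t \in \R$ and satisfy \eqref{eq:modul0}. In particular, $c_1(t)$ remains in a fixed compact subset $K_c \subset (-1, 1) \setminus \{0\}$, while $\|\eps_1(\cdot, t)\|_{X(\R)} \leq A_c \alpha_0$ uniformly in $t$. The soliton map $\sigma \mapsto Q_\sigma$ is smooth from $(-1,1)\setminus\{0\}$ to $H^1 \times L^2$, and a standard perturbation argument for Sturm--Liouville operators gives that $\sigma \mapsto \chi_\sigma$ is continuous into $L^2 \times L^2$. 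Combined with \eqref{orth:dcqc-chi_c}, one obtains a strictly positive lower bound $\inf_{\sigma \in K_c}|\langle \chi_\sigma, \partial_\sigma Q_\sigma\rangle_{L^2 \times L^2}| > 0$, so that the Jacobian matrix $d_{\sigma,b}\Xi(Q_{c_1(t), a_1(t)}, c_1(t), a_1(t))$ is invertible with an inverse whose operator norm is bounded uniformly in $t$.

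Next I would apply the quantitative version of the implicit function theorem from \cite{BetGrSm1} at each reference point $(Q_{c_1(t), a_1(t)}, c_1(t), a_1(t))$. Because the second derivatives of $\Xi$ in $((v,w), \sigma, b)$ are bounded on the compact set of reference points times a fixed neighbourhood, one obtains radii $\rho_c, \delta_c > 0$, independent of $t$, and local implicit function maps $\gamma_{c_1(t), a_1(t)}$ with the properties described in the preceding text. Decreasing $\alpha_c$ if necessary so that $A_c \alpha_c < \rho_c$, we deduce from \eqref{eq:modul0} for $\eps_1$ that $(v(\cdot, t), w(\cdot, t))$ lies in the corresponding neighbourhood $\boV$ of $Q_{c_1(t), a_1(t)}$ for every $t$. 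This produces, for each $t \in \R$, a unique pair $(c(t), a(t))$ in the ball of radius $\delta_c$ around $(c_1(t), a_1(t))$ such that $\Xi((v(t),w(t)), c(t), a(t)) = 0$, i.e. such that \eqref{cond:orth:tr} holds. The uniform estimate \eqref{est:ca}, which was already derived at the fixed $t$ level, then yields \eqref{eq:modul0} for the new parameters $(c, a)$ by simply adding the control $|c_1(t) - c| + |a_1(t) - a| \leq \Lambda \|\eps_1(t)\|_X$ to \eqref{eq:modul0} for $(c_1, a_1)$.

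It remains to verify the continuity of $t \mapsto (c(t), a(t))$. Fix $t_0 \in \R$. Since $t \mapsto (v(\cdot, t), w(\cdot, t))$ is continuous into $X(\R)$ by Theorem \ref{thm:orbistab1} and $t \mapsto (c_1(t), a_1(t))$ is of class $\boC^1$, for $t$ close to $t_0$ the point $(v(\cdot, t), w(\cdot, t))$ still lies in the neighbourhood $\boV$ constructed at $t_0$. The uniqueness statement of the implicit function theorem then forces $(c(t), a(t)) = \gamma_{c_1(t_0), a_1(t_0)}(v(t), w(t))$ for all $t$ in a neighbourhood of $t_0$, and continuity follows from the continuity of $\gamma_{c_1(t_0), a_1(t_0)}$. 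The main obstacle in this argument is obtaining uniform in $t$ radii in the implicit function theorem. This is precisely where one uses that $c_1(\cdot)$ stays in a compact subset of $(-1,1) \setminus \{0\}$, together with the continuous dependence of $Q_\sigma$, $\partial_\sigma Q_\sigma$, $\partial_x Q_\sigma$ and $\chi_\sigma$ on $\sigma$; once these uniform bounds are in hand, the rest is a routine globalization of the local construction.
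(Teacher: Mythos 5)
Your proposal is correct and follows essentially the same route as the paper: the paper's own justification of Lemma \ref{lem:cond-orth-t-2} is precisely the implicit-function-theorem construction of $\gamma_{c_1,a_1}$ carried out just above the lemma, applied pointwise in $t$ using the fact that $(v(t),w(t))$ stays in a uniform neighbourhood of $Q_{c_1(t),a_1(t)}$ and that $c_1$ satisfies \eqref{eq:modul0}, with \eqref{eq:modul0} for the new parameters deduced from \eqref{est:ca}. Your added detail on the uniformity of the IFT radii (via compactness of the range of $c_1$ and continuity of $\sigma\mapsto Q_\sigma,\chi_\sigma$) and on the globalization of continuity by local uniqueness only makes explicit what the paper leaves implicit.
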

This completes the proof of orbital stability.
Now, let us prove the continuous differentiability of the functions $a$ and $c$, as well as the inequality
\begin{equation}
\label{guardi}
\big| c'(t) \big| + \big| a'(t) - c(t) \big| \leq A_\gc \big\| \eps(\cdot, t) \big\|_{X(\R)},
\end{equation}
for all $t \in \R$. The $\boC^1$ nature of $a$ and $c$ can be derived from a standard density argument as in \cite{DeLGr}. Concerning \eqref{guardi}, we can write the equation verified by $\eps$, namely
\begin{equation}
\label{eq:eps1}
\partial_t \eps_v = \Big( \big( a'(t) - c(t) \big) \partial_x v_{c,a} - c'(t) \partial_c v_{c,a} \Big) + \partial_x \bigg( \big( (v_{c,a} + \eps_v)^2 - 1 \big) (v_{c,a} + \eps_w) - \big( v_{c,a}^2 - 1 \big) w_{c,a} \bigg),
\end{equation}
and
\begin{equation}
\label{eq:eps2}
\begin{split}
 \partial_t \eps_w & = \big( a'(t) - c(t) \big) \partial_x w_{c,a} - c'(t) \partial_c w_{c,a} \\ 
                   & + \partial_{x} \bigg( \frac{\partial_{xx} v_{c,a} + \partial_{xx} \eps_v}{1 - (v_{c,a} + \eps_v)^2} + \big( v_{c,a} + \eps_v \big) \frac{\big( \partial_x v_{c,a} +\partial_x \eps_v \big)^2}{\big( 1 - (v_{c,a} + \eps_v)^2 \big)^2} \\
                   & - \frac{\partial_{xx} v_{c,a}}{1 - v_{c,a}^2} - v_{c,a} \frac{\big( \partial_x v_{c,a} \big)^2}{\big( 1 - v_{c,a}^2 \big)^2} \bigg) \\
                   & + \partial_x \bigg( ( v_{c,a} + \eps_v) \big( (w_{c,a} + \eps_w)^2 - 1 \big) - v_{c,a} \big(w_{c,a}^2 - 1 \big) \bigg).
\end{split}
\end{equation}
We differentiate with respect to time the orthogonality conditions in \eqref{eq:ortho} and we invoke equations \eqref{eq:eps1} and \eqref{eq:eps2} to write the identity
\begin{equation}
\label{grece}
M \begin{pmatrix} c' \\ a' - c \end{pmatrix} = \begin{pmatrix} Y \\ Z \end{pmatrix}.
\end{equation}
Here, $M$ refers to the matrix of size $2$ given by
\begin{align*}
& M_{1,1} = \langle \partial_c Q_c, \chi_{c} \rangle_{L^2 \times L^2} + \langle \partial_c \chi_{c,a}, \eps \rangle_{L^2 \times L^2},\\
& M_{1, 2} = \langle \chi_c , \partial_x Q_c \rangle_{L^2 \times L^2} - \langle \partial_x \chi_{c,a}, \eps \rangle_{L^2 \times L^2},\\
& M_{2, 1} = - \langle \partial_x Q_c, \partial_c Q_c \rangle_{L^2 \times L^2} + \langle \partial_c \partial_x Q_{c,a}, \eps \rangle_{L^2 \times L^2},\\
& M_{2, 2} = \big\| \partial_x Q_c \big\|^2 _{L^2 \times L^2} - \langle \partial_{xx} Q_{c,a}, \eps \rangle_{L^2 \times L^2}.
\end{align*}
The vectors $Y$ and $Z$ are defined by
\begin{align*}
Y = & \Big\langle \partial_x w_{c,a}, \big( (v_{c,a} + \eps_v)^2 - 1 \big) (w_{c,a} + \eps_w) - \big( v_{c,a}^2 - 1 \big) w_{c,a} \Big\rangle_{L^2}\\
& + \Big\langle \partial_x v_{c,a}, \big( (w_{c,a} + \eps_w)^2 - 1 \big) (v_{c,a} + \eps_v) - \big( w_{c,a}^2 - 1 \big) v_{c,a} \Big\rangle_{L^2}\\
& - \Big\langle \partial_{xx} v_{c,a}, \frac{\partial_{xx} v_{c,a} + \partial_{xx} \eps_v}{1 - (v_{c,a} + \eps_v)^2} - \frac{\partial_{xx} v_{c,a}}{1 - v_{c,a}^2} \Big\rangle_{L^2} + c \big\langle \partial_x \chi_{c,a}, \eps \big\rangle_{L^2 \times L^2},
\end{align*}
and
\begin{align*}
Z = & \Big\langle \partial_{xx} v_{c,a}, \big( (v_{c,a} + \eps_v)^2 - 1 \big) (w_{c,a} + \eps_w) - \big( v_{c,a}^2 - 1 \big) w_{c,a} \Big\rangle_{L^2}\\
& + \Big\langle \partial_{xx} w_{c,a}, \big( (w_{c,a} + \eps_w)^2 - 1 \big) (v_{c,a} + \eps_v) - \big( w^2_{c,a} - 1 \big) v_{c,a} \Big\rangle_{L^2}\\
& - \Big\langle \partial_{xxx} w_{c,a}, \frac{\partial_{xx} v_{c,a} + \partial_{xx} \eps_v}{1 - (v_{c,a} + \eps_v)^2} - \frac{\partial_{xx} v_{c,a}}{1 - v_{c,a}^2} \Big\rangle_{L^2} + c \langle \partial_{xx} Q_{c,a}, \eps \rangle_{L^2 \times L^2}.
\end{align*}

We next decompose the matrix $M$ as $M = D + H$, where $D$ is the diagonal matrix of size $2$ with diagonal coefficients
$$D_{1, 1} = \langle \partial_c Q_c, \chi_{c} \rangle_{L^2 \times L^2} \neq 0,$$
by \eqref{orth:dcqc-chi_c}, and
$$D_{2, 2} = \| \partial_x Q_{c(t)} \|_{L^2}^2 = 2 (1 - c(t)^2)^\frac{1}{2},$$
so that $D$ is invertible. Concerning the matrix $H$, we check that
$$\langle P'(Q_c), \partial_x Q_c \rangle_{L^2 \times L^2} = \langle \partial_x Q_c, \partial_c Q_c \rangle_{L^2 \times L^2} = 0.$$
Then,
$$H = \begin{pmatrix}  \langle \partial_c \chi_{c,a}, \eps \rangle_{L^2 \times L^2} & - \langle \partial_x \chi_{c,a}, \eps \rangle_{L^2 \times L^2} \\ \langle \partial_c \partial_x Q_{c,a}, \eps \rangle_{L^2 \times L^2} & - \langle \partial_{xx} Q_{c,a}, \eps \rangle_{L^2 \times L^2} \end{pmatrix}.$$
It follows from the exponential decay of $Q_{c,a}$ and its derivatives that 
$$|H| \leq A_\gc \|\eps\|_{L^2 \times L^2 }.$$
We can make a further choice of the positive number $\alpha_c$, such that the operator norm of the matrix $D^{- 1} H$ is less than $1/2$. In this case, the matrix $M$ is invertible and the operator norm of its inverse is uniformly bounded with respect to $t$. Coming back to \eqref{grece}, we are led to the estimate
\begin{equation}
\label{eubee}
 \big| c'(t) \big| + \big| a'(t) - c(t) \big| \leq A_\gc \bigg( \big| Y(t) \big| + \big| Z(t) \big| \bigg).
\end{equation}
It remains to estimate the quantities $Y$ and $Z$. We write
\begin{align*}
& \Big| \Big\langle \partial_x w_{c,a}, \big( (v_{c,a} + \eps_v)^2 - 1 \big) (w_{c,a} + \eps_w) - \big( v_{c,a}^2 - 1 \big) w_{c,a} \Big\rangle_{L^2} \Big| \\
& = \Big| \Big\langle \partial_x w_{c,a}, (\eps^2_v + 2 v_{c,a} \eps_v ) w_{c,a} + \eps_w \big((\eps_v + v_{c,a})^2 -1\big) \Big\rangle_{L^2} \Big| \leq A_\gc \|\eps\|_{L^2 \times L^2} .
\end{align*}
Arguing in the same way for the other terms in $Y$ and $Z$, we obtain
$$|Y| + |Z| = \boO \big( \| \eps \|_{L^2 \times L^2} \big),$$
which is enough to deduce \eqref{guardi} from \eqref{eubee}.\\
To achieve the proof, we show \eqref{eq:max-v}. Using the Sobolev embedding theorem of $H^1(\R)$ into $\boC^0(\R)$, we can write
$$\max_{x \in \R} v(x, t) \leq \big\| v_{c(t)} \big\|_{L^\infty(\R)} + \big\|v(\cdot, t) - v_{c(t),a(t)} \big\|_{L^\infty(\R)} \leq \big\| v_{c(t)} \big\|_{L^\infty(\R)} + \|\eps(t)\|_{X(\R)}.$$
By \eqref{form:vc}, $\big\| v_{c} \big\|_{L^\infty(\R)}<1$, so that by \eqref{eq:modul0} there exists a small positive number $\gamma_c$ such that \\ $\big\| v_{c(t)} \big\|_{L^\infty(\R)} \leq 1 - \gamma_c .$ We obtain
$$\max_{x \in \R} v(x, t) \leq 1 - \gamma_c + \|\eps(t)\|_{X(\R)} \leq 1 - \gamma_c + \alpha_c.$$
For $\alpha_c$ small enough, estimate \eqref{eq:max-v} follows, with $\sigma_c := -\alpha_c + \gamma_c$.
\end{proof}

\section{Proofs of localization and smoothness of the limit profile}

\subsection{Proof of Proposition \ref{prop:mono}}
The proof relies on the conservation law for the density of momentum $vw$. Let $R$ and $t$ be two real numbers, and recall that
$$I_R(t) \equiv I_R^{(v, w)}(t) := \frac{1}{2} \int_\R \big[ v w \big](x + a(t), t) \Phi(x - R) \, dx,$$
where $\Phi$ is the function defined on $\R$ by 
$$\Phi(x) := \frac{1}{2} \Big( 1 + \th \big( \nu_\gc x \big) \Big),$$
with $\nu_\gc := \sqrt{1 - \gc^2}/8$.
First, we deduce from the conservation law for $vw$  (see Lemma 3.1 in \cite{DeLGr} for more details) the identity
\begin{equation}
\label{consint}
\begin{split}
\frac{d}{dt} \big[ I_{R + \sigma t}(t) \Big] = & - (a'(t) + \sigma) \int_\R \big[  v w \big](x + a(t), t) \Phi'(x - R - \sigma t) \, dx\\
& + \int_\R \Big[ v^2 + w^2 -3 v^2 w^2 + \frac{3-v^2}{(1-v^2)^2} (\partial_x v)^2 \Big](x + a(t), t) \Phi'(x - R - \sigma t) \, dx\\
& + \int_\R \big[ \ln(1 - v^2) \big](x + a(t), t) \Phi'''(x - R - \sigma t) \, dx.
\end{split}
\end{equation}

Our goal is to provide a lower bound for the integrand in the right-hand side of \eqref{consint}.

 Notice that the function $\Phi$ satisfies the inequality
\begin{equation}
\label{eq:dior0}
|\Phi'''| \leq 4 \nu_\gc^2 \Phi'. 
\end{equation}
In view of the bound \eqref{eq:lvmh1} on $a'(t)$ and the definition of $\sigma_\gc$, we obtain that
\begin{equation}
\label{eq:dior3}
\big| a'(t) + \sigma \big|^2 \leq \frac{9+7\gc^2}{8}.
\end{equation}
Hence, we deduce
\begin{equation}
\label{consint1}
\begin{split}
\frac{d}{dt} \big[ I_{R + \sigma t}(t) \big] \geq &\int_\R \Big[ 4 \nu _{\gc} ^2 \ln(1 - v^2) + v^2 + w^2 -3 v^2 w^2 \\
    & + (\partial_x v)^2- \sqrt{\frac{9 + 7\gc ^2}{8}} \big| vw \big| \Big](x + a(t), t) \Phi'(x - R - \sigma t) \, dx := J_1 + J_2.
\end{split}
\end{equation}
At this step, we decompose the real line into two domains, $[- R_0, R_0]$ and its complement, where $R_0$ is to be defined below and we denote $J_1$ and $J_2$ the value of the integral in the right-hand side of \eqref{consint1} on each region. On $\R \setminus [- R_0, R_0]$, we bound the integrand pointwise from below by a positive quadratic form in $(v, w)$. Exponentially small error terms arise from integration on $[- R_0, R_0]$.

For $|x| \geq R_0$, using Theorem \ref{thm:orbistab}, the Sobolev embedding theorem, and choosing $\alpha_0$ small enough and $R_0$ large enough, we obtain
\begin{equation}
\label{eq:dior1}
\big| v(x + a(t), t) \big| \leq |\eps_v(x,t)| + |v_{c(t)}(x)| \leq A_\gc (\alpha_0 + \exp(-\sqrt{1-\gc ^2}R_0)) \leq \frac{1}{12},
\end{equation}
for any $t \in \R$. Using the fact that $\ln(1 - s) \geq -2s$ for all $s \in [0,\frac{1}{2}]$ and introducing \eqref{eq:dior1} in \eqref{consint1}, we obtain
\begin{equation}
\label{consint2}
\begin{split}
J_1 \geq \frac{1-\gc ^2}{8} \int_{|x| \geq R_0} \Big[ v^2 + w^2 + (\partial_x v)^2 \Big](x + a(t), t) \Phi'(x - R - \sigma t) \, dx.
\end{split}
\end{equation}
We next consider the case $x \in [- R_0, R_0]$. In that region, we have
$$ |x-R-\sigma t| \geq - R_0 + |R+\sigma t|.$$
Hence,
\begin{equation}
\label{eq:dior4}
\Phi'(x - R - \sigma t) \leq 2 \nu_\gc e^{ 2\nu_\gc R_0} e^{ -2\nu_\gc |R + \sigma t|}.
\end{equation}
Since the function $|\ln|$ is decreasing on $(0,1]$, in view of \eqref{eq:max-v} and \eqref{consint1},
$$ \Big| J_2 \Big| \leq A_c \int_{|x| \leq R_0} \Big[ v^2 + w^2 + (\partial_x v)^2 \Big](x + a(t), t) \Phi'(x - R - \sigma t) \, dx. $$
Then, by \eqref{eq:dior4} and the control on the norm of $(v,w)$ in $X(\R)$ provided by the conservation of the energy, we obtain 
$$ \Big| J_2 \Big| \leq B_c e^{ -2\nu_\gc |R + \sigma t|}.$$
This finishes the proof of \eqref{eq:mono}.
It remains to prove \eqref{eq:monobis}. For that, we distinguish two cases. If $R \geq 0$, we integrate \eqref{eq:mono} from $t = t_0$ to $t = (t_0 + t_1)/2$,  choosing $\sigma = \sigma_\gc$ and $R = R - \sigma_\gc t_0$, and then from $t = (t_0 + t_1)/2$ to $t = t_1$ choosing $\sigma = - \sigma_\gc$ and $R = R + \sigma_\gc t_1$. If $R \leq 0$, we use the same arguments for the reverse choices $\sigma = - \sigma_\gc$ and $\sigma = \sigma_\gc$. This implies \eqref{eq:monobis}, and finishes the proof of Proposition \ref{prop:mono}.
\qed

\subsection{Proof of Proposition \ref{prop:smooth}}
Let $\Psi^*$ and $v^*$ be the solutions of \eqref{eq:Psi}-\eqref{eq:v-Psi} expressed in terms of the hydrodynamical variables $(v^*,w^*)$ as in \eqref{def:Psi}. We split the proof into five steps.
\setcounter{step}{0}
\begin{step}
\label{R0}
There exists a positive number $A_{\gc}$, depending only on $\gc$, such that
\begin{equation}
\label{smoothpsi*}
\int_t^{t + 1} \int_\R \big| \partial_x \Psi^*(x + a^*(t), s) \big|^2 e^{ \nu_\gc |x|} \, dx \, ds \leq A_{ \gc},
\end{equation}
for any $t\in \R$.
\end{step}
By \eqref{eq:lvmh2bis} and \eqref{def:Psi}, 
\begin{equation}
\label{estpsi*}
|\Psi^*| \leq A_\gc \big( |\partial_x v^* | + |w^*| \big).
\end{equation}
In view of Proposition \ref{prop:local} and the fact that $|a^*(t) - a^*(s)|$ is uniformly bounded for $s \in [t - 1, t + 2]$ by \eqref{eq:lvmh1bis}, this yields
\begin{equation}
\label{estL2psi*}
\int_{t-1}^{t + 2} \int_\R \big| \Psi^*(x + a^*(t), s) \big|^2 e^{2 \nu_\gc |x|} \, dx \, ds \leq A_\gc.
\end{equation}
We denote
$$ F^* := - \frac{1}{2} (v^*)^2 \Psi^* + \Re \Big( \Psi^* \big( 1 - 2 F(v^*, \overline{\Psi}^*) \big) \Big) \big( 1 - 2 F(v^*, \Psi^*) \big).$$
We recall that $\|v^*\|_{L^\infty(\R \times \R)} <1 -\sigma_\gc$ by \eqref{eq:lvmh2bis}. Using the Cauchy-Schwarz inequality, the Sobolev embedding theorem and the control of the norm in $X(\R)$ provided by the conservation of energy, we have $F(v^*, \Psi^*) \in L^\infty ( \R \times \R ).$ Hence, 
\begin{equation}
\label{estF*}
 |F^*| \leq A_\gc |\Psi^*|,
\end{equation}
where $A_\gc$ is a positive number depending only on $\gc$.
Then, by \eqref{estL2psi*},
\begin{equation}
\label{smoothF*}
\int_{t-1}^{t + 2} \int_\R \big| F^*(x + a^*(t), s) \big|^2 e^{2 \nu_\gc |x|} \, dx \, ds \leq A_\gc,
\end{equation}
for any $t\in \R$. 
Next, by Proposition \ref{prop:cont-Cauchy}, we have
\begin{equation}
\label{estimL4Linf}
\big\| \Psi^*\big\|_{L^4([t-1, t+2], L^\infty)} \leq A_\gc.
\end{equation}
Indeed, we fix $t \in \R$ and we denote $(\Psi^0_{1},v^0_1) := (\Psi^*(\cdot + a^*(t-1), t-1),v^*(\cdot + a^*(t-1), t-1))$ and $(\Psi_{1}(s),v_1(s)) := (\Psi^*(\cdot + a^*(t-1), t-1+s),v^*(\cdot + a^*(t-1), t-1+s))$ the corresponding solution to \eqref{eq:Psi}-\eqref{eq:v-Psi}. Denote also $(\Psi^0_{2},v^0_2) := (\Psi_{c^*(t-1)},v_{c^*(t-1)})$ and $(\Psi_{2}(s),v_2(s)) := (\Psi_{c^*(t-1)}(x-c^*(t-1)s),v_{c^*(t-1)}(x-c^*(t-1)s))$ the corresponding solution to \eqref{eq:Psi}-\eqref{eq:v-Psi}, where $\Psi_{c^*(t)}$ is the solution to \eqref{eq:Psi} associated to the soliton $Q_{c^*(t)}$. We have, by \eqref{mali}, 
$$\big\|\Psi_{1}(s) - \Psi_{2}(s)\big\|_{L^4([0, \tau_\gc], L^\infty)} \leq A \Big( \big\| v^0_1 - v^0_2 \big\|_{L^2} + \big\| \Psi^0_{1} - \Psi^0_{2} \big\|_{L^2} \big).$$
Using \eqref{eq:modul0bis}, we obtain
$$\big\|\Psi_{1}(s) - \Psi_{2}(s)\big\|_{L^4([0, \tau_\gc], L^\infty)} \leq A_\gc,$$
where $\tau_\gc = \tau_\gc (\| v^0_1 \|_{L^2},\| v^0_2 \|_{L^2},\| \Psi^0_1 \|_{L^2},\| \Psi^0_2 \|_{L^2})$ depend only on $\gc$. Since $[0,3] \subseteq \underset{0\leq k \leq 3/\tau_\gc}{\bigcup} [k\tau_\gc ,(k+1) \tau_\gc],$ we can infer \eqref{estimL4Linf} inductively .

In addition, by \eqref{estpsi*}, we have
\begin{equation}
\label{estmPsi*L_infty_L_2}
\big\| \Psi^*(\cdot + a^*(t), \cdot) \big\|_{L^\infty([t-1,t+2],L^2)} \leq A_\gc.
\end{equation}
Hence, applying the Cauchy-Schwarz inequality to the integral with respect to the time variable, \eqref{estL2psi*}, \eqref{estimL4Linf} and \eqref{estmPsi*L_infty_L_2},
\begin{equation}
\label{estL4psi*}
\begin{split}
&\int_{t-1}^{t + 2} \int_\R \big| \Psi^*(x + a^*(t), s) \big|^4 e^{ \nu_\gc |x|} \, dx \, ds \\
 & \leq \int_{t-1}^{t + 2} \int_\R \big| \Psi^*(x + a^*(t), s) \big|^2 e^{ \nu_\gc |x|} \, dx \| \Psi^*(s)\|^2_{L^\infty(\R)} \, ds\\
      & \leq \big\| \Psi^*(\cdot + a^*(t), \cdot) e^{\frac{\nu_\gc}{2}  |\cdot|} \big\|_{L^4([t-1,t+2],L^2(\R))}^2 \, \big\| \Psi^*(\cdot + a^*(t), \cdot) \big\|_{L^4([t-1,t+2],L^\infty(\R))}^2 \\
      & \leq \big\| \Psi^*(\cdot + a^*(t), \cdot) e^{ \nu_\gc |\cdot|} \big\|_{L^2([t-1,t+2],L^2(\R))} \,  \big\| \Psi^*(\cdot + a^*(t), \cdot) \big\|_{L^\infty([t-1,t+2],L^2(\R))} \,\\
      & \ \ \ \ \ \big\| \Psi^*(\cdot + a^*(t), \cdot) \big\|_{L^4([t-1,t+2],L^\infty(\R))}^2 \\
      & \leq A_\gc.
\end{split}
\end{equation}
In order to use Proposition \ref{prop:smoothing} on $\Psi^*$, it is sufficient to verify
\begin{equation}
\label{estLinfL2psi*}
\sup_{s \in [t-1,t + 2]} \int_\R \big| \Psi^*(x + a^*(t), s) \big|^2 e^{2 \nu_\gc |x|} \, dx \, ds \leq A_\gc.
\end{equation}
 
Indeed, using \eqref{estLinfL2psi*} and \eqref{estimL4Linf}, we can write
\begin{equation}
\label{estL6psi*}
\begin{split}
&\int_{t-1}^{t + 2} \int_\R \big| \Psi^*(x + a^*(t), s) \big|^6 e^{ 2\nu_\gc |x|} \, dx \, ds \\
&\leq \big\| \Psi^*(\cdot + a^*(t), \cdot) e^{ \nu_\gc |\cdot|} \big\|^2_{L^\infty([t-1,t+2],L^2(\R))} \, \big\| \Psi^*(\cdot + a^*(t), \cdot) \big\|_{L^4([t-1,t+2],L^\infty(\R))}^4 \\
&\leq A_\gc,
\end{split}
\end{equation}
which proves that $\Psi^*$ satisfies the assumptions of Proposition \ref{prop:smoothing}. Then, we apply Proposition \ref{prop:smoothing} with $u := \Psi^*(\cdot + a^*(t), \cdot + (t + 1/2))$, $T := 1/2$, $F := |u|^2u + F^*(\cdot,t+1/2)$ and successively $\lambda := \pm  \nu_\gc$ and we use \eqref{estL2psi*} and \eqref{smoothF*} to obtain \eqref{smoothpsi*}.

Now let us prove \eqref{estLinfL2psi*}. First, we recall the next lemma stated by Kenig, Ponce and Vega \cite{KenPoVe9}.
\begin{lem}
\label{Lemma 2.1}

Let $a \in [-2,-1]$ and $b \in [2,3]$. Assume that $u\in C^0([a,b]:L^2(\R))$ is a solution of the inhomogeneous Schrödinger equation
\begin{equation}
\label{2.2}
i\partial_t u + \partial_{xx} u = H,
\end{equation}
with $H\in L^1([a,b]:L^2(e^{\beta x}dx))$, for some $\beta\in\R$, and 
\begin{equation}
\label{2.3}
u_a \equiv u(\cdot,a),\,u_b\equiv u(\cdot,b)\in L^2(e^{\beta x}dx).
\end{equation}
There exist a positive number $K$ such that
\begin{equation}
\label{2.4}
\begin{aligned}
&\sup_{a\leq t\leq b}\|u(\cdot,t)\|_{L^2(e^{\beta x}dx)}\\
\\
&\leq K (\|u_a\|_{L^2(e^{\beta x}dx)} + \|u_b\|_{L^2(e^{\beta x}dx)}
+\|H\|_{L^1([a,b],L^2(e^{\beta x}dx))}).
\end{aligned}
\end{equation}

\end{lem}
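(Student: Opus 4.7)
The plan is to establish \eqref{2.4} by a logarithmic convexity argument of Kenig--Ponce--Vega type. A naive energy estimate on the weighted norm $\|u(\cdot,t)\|_{L^2(e^{\beta x}dx)}$ cannot work, because the conjugated propagator $e^{\beta x/2}e^{it\partial_{xx}}e^{-\beta x/2}$ has Fourier symbol $e^{-it\xi^2}e^{\beta\xi t}$, which grows without bound in $L^2$ whenever $\beta t \neq 0$. The two-sided endpoint control built into the statement is precisely what compensates for this ill-posedness.

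I would first reduce to an unweighted equation. Setting $w(x,t):=e^{\beta x/2}u(x,t)$, one has $\|w(t)\|_{L^2}^2=\|u(t)\|_{L^2(e^{\beta x}dx)}^2=:\Phi(t)$, and $w$ solves
\begin{equation*}
i\partial_t w = (T_s+T_a)\,w - i\tilde H, \qquad T_s:=-\partial_{xx}-\tfrac{\beta^2}{4},\quad T_a:=\beta\partial_x,\quad \tilde H:=e^{\beta x/2}H.
\end{equation*}
On $L^2(\R)$ the operator $T_s$ is symmetric and $T_a$ is antisymmetric; crucially, the commutator $[T_s,T_a]$ vanishes because both are polynomials in $\partial_x$. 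This algebraic feature is what makes the convexity computation tractable.

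Next I would carry out the convexity computation. Using that $\langle T_s w,w\rangle$ is real while $\langle T_a w,w\rangle$ is purely imaginary, a direct calculation in the homogeneous case $\tilde H\equiv 0$ yields
\begin{equation*}
\Phi'(t)=2\,\Im\langle T_a w(t),w(t)\rangle,\qquad \Phi''(t)=4\,\|T_a w(t)\|_{L^2}^2,
\end{equation*}
where the vanishing of $[T_s,T_a]$ makes the second identity particularly clean. Combined with Cauchy--Schwarz in the form $|\langle T_a w,w\rangle|^2\leq \|T_a w\|^2\|w\|^2$, this immediately gives $\Phi\Phi''\geq(\Phi')^2$, i.e., $\log\Phi$ is convex in $t$. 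Reinstating the forcing $\tilde H$ produces additional cross terms which, after another application of Cauchy--Schwarz, lead to the differential inequality
\begin{equation*}
(\log\Phi)''(t)\geq -C\bigl(\|\tilde H(\cdot,t)\|_{L^2}^2\Phi(t)^{-1}+\|\tilde H(\cdot,t)\|_{L^2}\Phi(t)^{-1/2}\bigr).
\end{equation*}
Keeping careful track of the signs and sizes of the $\tilde H$-dependent correction terms so that this convexity structure survives is the main technical obstacle.

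Finally, I would integrate. Writing the inequality above as $(\log\Phi)''(t)\geq -g(t)$ and applying the maximum principle for almost-convex functions on the interval $[a,b]$ of bounded length $\leq 5$, one obtains
\begin{equation*}
\log\Phi(t)\leq \max\{\log\Phi(a),\log\Phi(b)\}+C'\!\int_a^b g(s)\,ds.
\end{equation*}
A standard bootstrap (for instance, working with $\Phi(t)+\|\tilde H\|_{L^1_tL^2_x}^2$ in place of $\Phi(t)$ to avoid a degeneracy when $\Phi$ is small) allows one to absorb the nonlinear dependence of $g$ on $\Phi$ and to recover a bound polynomial in $\|\tilde H\|_{L^1([a,b],L^2)}$. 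Exponentiating, extracting a square root, and unwinding the substitution $w\mapsto u$ finally produces \eqref{2.4} with a constant $K$ depending only on $\beta$.
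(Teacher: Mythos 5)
The paper does not actually prove this lemma; it quotes it from Kenig--Ponce--Vega \cite{KenPoVe9}, whose argument is quite different from yours and much more elementary. After conjugating by the weight and taking the Fourier transform in $x$, the equation becomes, for each frequency $\xi$, an ODE in $t$ with integrating factor $e^{i(\xi-i\beta)^{2}t}$, whose modulus is $e^{2\beta\xi t}$. One integrates the resulting Duhamel formula from $t=a$ on the set $\{\beta\xi\geq 0\}$ and from $t=b$ on the set $\{\beta\xi<0\}$, so that every exponential factor appearing is bounded by $1$; Minkowski's inequality in $\xi$ then yields \eqref{2.4} with an absolute constant. Your logarithmic-convexity strategy is instead the Escauriaza--Kenig--Ponce--Vega route (as in \cite{EsKePoV5}); it is viable in principle, and your homogeneous computation ($[T_s,T_a]=0$, hence $\Phi\Phi''\geq(\Phi')^{2}$ by Cauchy--Schwarz) is correct as a formal identity. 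What it buys is robustness (it extends to Gaussian weights and variable coefficients); what it costs is the two issues below.

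First, and most seriously, the convexity computation presupposes what the lemma is meant to prove. The hypotheses give only $u\in C^{0}([a,b],L^{2}(\R))$ with weighted control at the two endpoints, so a priori $\Phi(t)=\|e^{\beta x/2}u(\cdot,t)\|_{L^{2}}^{2}$ could be infinite for every interior $t$, and $w(t)$ need not lie in the domain of $T_a$ or $T_s$; you cannot differentiate $\Phi$ twice before knowing it is finite. The standard repair is to replace $e^{\beta x}$ by bounded truncations $e^{\theta_{n}(x)}$ with $\theta_{n}\uparrow\beta x$, prove the estimate uniformly in $n$, and pass to the limit; but then the truncated symmetric and antisymmetric parts no longer commute, and controlling the resulting commutator errors is where the real work lies --- it is absent from your sketch. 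Second, the forcing term: differentiating $\Phi'$ produces $\tfrac{d}{dt}\,\Re\langle -i\tilde H,w\rangle$, which involves $\partial_{t}\tilde H$ and is not available for $H\in L^{1}_{t}L^{2}_{x}$, while the term $\langle T_{a}\tilde H,w\rangle$ must be moved by antisymmetry onto $w$ and absorbed into $4\|T_{a}w\|^{2}$. You flag this as ``the main technical obstacle'' but do not resolve it, and the asserted differential inequality for $(\log\Phi)''$ is never derived. As written the proposal is a plausible program rather than a proof; the Fourier-side argument of \cite{KenPoVe9} avoids all of these difficulties.
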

In order to apply the lemma, we need to verify the existence of numbers $a$ and $b$ such that \eqref{2.3} holds for $u := \Psi^*(\cdot + a^*(t), \cdot + t )$ and such that $H:=|u|^2u + F^*(\cdot,\cdot+t) \in L^1([a,b],L^2(e^{\beta x}dx))$, for $\beta = \pm \nu_\gc$ respectively and any $t\in \R$.
Our first claim is a consequence of \eqref{estL2psi*} and the Markov inequality. Indeed, there exist $s_0 \in [-2,-1]$ and $s_1 \in [2,3]$ such that
$$\int_\R \big| \Psi^*(x + a^*(t), s_j+t) \big|^2 e^{2 \nu_\gc |x|} \, dx \leq A_\gc \quad {\rm for} \, j=0,1.$$
For the second claim, by \eqref{smoothF*} and the Cauchy-Schwarz estimate, it is sufficient to show that $|u|^2u \in L^1([-2,3],L^2(e^{\nu_\gc |x|}dx))$. To prove this we use the Cauchy-Schwarz inequality for the time variable, \eqref{estL2psi*} and \eqref{estimL4Linf},
\begin{align*}
& \int_{-2}^{ 3} \Big( \int_\R \big| \Psi^*(x + a^*(t), s+t) \big|^6 e^{ 2\nu_\gc |x|} \, dx\Big)^\frac{1}{2} \, ds\\
& \leq \big\| \Psi^*(\cdot + a^*(t), \cdot+t) e^{ \nu_\gc |\cdot|} \big\|_{L^2([-2,3],L^2)} \, \big\| \Psi^*(\cdot + a^*(t), \cdot+t) \big\|_{L^4([-2,3],L^\infty)}^2\\
& \leq A_\gc.
\end{align*}
Now, we are allowed to apply Lemma \ref{Lemma 2.1} with $a=s_0$ and $b=s_1$ to deduce \eqref{estLinfL2psi*}.
This finishes the proof of this first step.

In the next step, we prove that \eqref{smoothpsi*} remains true for all the derivatives of $\Psi^*$ and $v^*$.
\begin{step}
\label{R1}
Let $k \geq 1$. There exists a positive number $A_{k, \gc}$, depending only on $k$ and $\gc$, such that
\begin{equation}
\label{eq:Rk2}
\int_t^{t + 1} \int_\R \big| \partial_x^k \Psi^*(x + a^*(t), s) \big|^2 e^{ \nu_\gc |x|} \, dx \, ds \leq A_{k, \gc},
\end{equation}
and
\begin{equation}
\label{estmL2tL2v*exp}
\int_t^{t + 1} \int_\R \big| \partial_x^k v^*(x + a^*(t), s) \big|^2 e^{ \nu_\gc |x|} \, dx \leq A_{k, \gc},
\end{equation}
for any $t\in \R$.
\end{step}

The proof of Step \ref{R1} is by induction on $k \geq 1$. We are going to differentiate \eqref{eq:Psi} $k$ times with respect to the space variable and write the resulting equation as
\begin{equation}
\label{eq:psikder}
i \partial_t \big( \partial_x^k \Psi^* \big) + \partial_{xx} \big( \partial_x^k \Psi^* \big) = R_k(v^*,\Psi^*).
\end{equation}
where $R_k(v^*,\Psi^*) = \partial^k_x \big( |\Psi ^* |^2 \Psi ^*\big) + \partial^k_x F^*.$
 We are going to prove by induction that \eqref{eq:Rk2}, \eqref{estmL2tL2v*exp} and
\begin{equation}
\label{eq:Rk}
\int_t^{t + 1} \int_\R \big| R_k(v^*,\Psi^*)(x + a^*(t), s) \big|^2 e^{ \nu_\gc |x|} \, dx \, ds \leq A_{k, \gc},
\end{equation}
hold simultaneously for any $t \in \R$. Notice that \eqref{eq:Rk2} implies that $\partial_x^k \Psi^* \in L^2_{\rm loc}(\R, L^2(\R))$, while \eqref{eq:Rk} implies that $R_k(v^*,\Psi^*) \in L^2_{\rm loc}(\R, L^2(\R))$. Therefore, if \eqref{eq:Rk2}, \eqref{estmL2tL2v*exp} and \eqref{eq:Rk} are established for some $k \geq 1$, then applying Proposition \ref{prop:smoothing} to $\partial_x^k \Psi^*$ can be justified by a standard approximation procedure.

For $k=1$, \eqref{eq:Rk2} is exactly \eqref{smoothpsi*}. \eqref{estmL2tL2v*exp} holds from Proposition \ref{prop:local} and the fact that $|a^*(t) - a^*(s)|$ is uniformly bounded for $s \in [t - 1, t + 2]$. Next, we write
\begin{align*}
R_1(v^*,\Psi^*) &= - v^* \partial_x v^* \Psi^* - \frac{1}{2} (v^*)^2 \partial_x \Psi^* + \Re \Big( \partial_x \Psi^* \big( 1 - 2 F(v^*, \overline{\Psi}^*) \big) \Big) \big( 1 - 2 F(v^*, \Psi^*) \big)\\
          & \ \ - 2 v^* |\Psi^*|^2 \big( 1 - 2 F(v^*, \Psi^*) \big) -2 v^* \Psi^* \Re \Big( \Psi^* \big( 1 - 2 F(v^*, \overline{\Psi}^*) \big) -2 \partial_x \big(\Psi^*|\Psi^*|^2\big) \Big).
\end{align*}
We will show that 
\begin{equation}
\label{est:Psi*_L_inf_L_inf}
\Psi^* \in L^\infty([t-1,t+2], L^\infty(\R)),
\end{equation}
in order to control the derivative of the cubic non-linearity by $|\partial_x \Psi^*|$ and then we will use the fact that $F(v^*,\Psi^*) \in L^\infty(\R \times \R)$, $\|v^*\|_{L^\infty(\R \times \R)}<1$ and the second equation in \eqref{eq:v-Psi} to get
\begin{equation}
\label{estimR1}
R_1(v^*,\Psi^*) \leq K \big( |\partial_x \Psi^*| + |\partial_x v^*| |\Psi^*| + |\Psi^*|^2 \big).
\end{equation}
Let us prove \eqref{est:Psi*_L_inf_L_inf}. We define the function $H$ on $\R$ by
$$H(s) := \frac{1}{2} \int_\R ( |\partial_x \Psi^*(x, s)|^2 - |\Psi^*(x, s)|^4) dx.$$
We differentiate it with respect to $s$,  integrate by part and use \eqref{eq:Psi} to obtain
\begin{equation}
\label{est:H}
\begin{split}
H'(s) & =  - \Re \Big( \int_\R \partial_s \Psi^*(x, s) \big[ \overline{\partial_{xx} \Psi^* + 2 \Psi^*|\Psi^*|^2} \big](x, s) dx\Big)\\
      & = \Re \Big( \int_\R \partial_s \Psi^*(x, s) F^*(x, s) dx\Big)\\
      & \leq \|\partial_s \Psi^*(s)\|_{H^{-1}(\R)} \|F^*(s)\|_{H^{1}(\R)}.
\end{split}
\end{equation}
We have
$$|\partial_x F^*| \leq K \big( |\partial_x \Psi^*| + |\partial_x v^*| |\Psi^*| + |\Psi^*|^2 \big),$$
using the fact that $F(v^*,\Psi^*) \in L^\infty(\R \times \R)$, $\|v^*\|_{L^\infty(\R \times \R)}<1$ and the second equation in \eqref{eq:v-Psi}.

Hence, by \eqref{smoothpsi*}, \eqref{estL2psi*}, \eqref{estL4psi*}, and the fact that $ | \partial_x v^* | \leq | \Psi^*| $ on $\R \times \R $, we obtain
\begin{equation}
\label{est:dxF^*}
\|\partial_x F^*\|_{L^2([t-1,t+2],L^2(\R))} \leq A_\gc .
\end{equation}
On the other hand, we infer from \eqref{eq:Psi}, \eqref{smoothpsi*}, \eqref{smoothF*} and the fact that \\ $\Psi^* \in L^4([t-1,t+2],L^\infty(\R))\bigcap L^8([t-1,t+2],L^4(\R)),$ that
\begin{equation}
\label{ds:Psi}
\|\partial_s \Psi^*\|_{L^2([t-1,t+2],H^{-1}(\R))} \leq A_\gc.
\end{equation}
Next, we integrate \eqref{est:H} between $t-1$ and $t+2$ and we apply the Cauchy-Schwarz inequality to obtain $H \in W^{1,1}([t-1,t+2]) $ for all $t\in \R$ using \eqref{est:dxF^*} and \eqref{ds:Psi}. Notice that all these computations can be justified by a standard approximation procedure. This yields, by the Sobolev embedding theorem, that $H \in L^\infty([t-1,t+2]).$ We conclude that the derivative $\partial_x \Psi^* \in  L^\infty([t-1,t+2],L^2(\R)).$ Indeed, we can use the Gagliardo-Nirenberg inequality and the fact that $\Psi^*$ is uniformly bounded in $L^2(\R)$ by a positive number to write
\begin{equation*}
\begin{split}
H(s) & \geq \frac{1}{2} \int_\R  |\partial_x \Psi^*(x, s)|^2 dx - A \| \Psi^*(s)\|^3_{L^2(\R)} \|\partial_x \Psi^*(\cdot)\|_{L^2(\R)} \\
     & \geq \frac{1}{2} \int_\R |\partial_x \Psi^*(x, s)|^2 dx - A K^3 \|\partial_x \Psi^*(\cdot)\|_{L^2(\R)}.
\end{split}
\end{equation*}
The function $x\mapsto \frac{1}{2} x^2 -A M^3 x$ diverges to $+\infty$ when $x$ goes to $+\infty$. Since $H$ is bounded, we infer that $\|\partial_x \Psi^*(\cdot)\|_{L^2(\R)}$ is uniformly bounded on $[t-1,t+2]$ for all $t \in \R$. This finishes the proof of \eqref{est:Psi*_L_inf_L_inf} by the Sobolev embedding theorem.
Then, by \eqref{estimR1}, \eqref{eq:Rk} for $k=1$ is a consequence of \eqref{smoothpsi*}, \eqref{estL4psi*} and the fact that $ | \partial_x v^* | \leq | \Psi^*| $ on $\R \times \R .$

Assume now that \eqref{eq:Rk2}, \eqref{estmL2tL2v*exp} and \eqref{eq:Rk} are satisfied for any integer $1 \leq k \leq k_0$ and any $t \in \R$. Let us prove these three estimates for $k=k_0+1$.
We apply Proposition \ref{prop:smoothing} with $u := \partial_x^{k_0} \Psi^*(\cdot + a^*(t), \cdot + (t + 1/2))$, $T := 1/2$ and successively $\lambda := \pm  \nu_\gc$. In view of \eqref{eq:Rk2}, \eqref{eq:psikder}, \eqref{eq:Rk}, and the fact that $|a^*(t) - a^*(s)|$ is uniformly bounded for $s \in [t - 1, t + 2]$, this yields
\begin{equation}
\label{eq:Rk3}
\int_t^{t + 1} \int_\R |\partial_x^{k_0 + 1} \Psi^*(x + a^*(t), s)|^2 e^{ \nu_\gc |x|} \, dx \, ds \leq A_\gc ,
\end{equation}
so that \eqref{eq:Rk2} is satisfied for $k = k_0 + 1$.

Let $k \in \{1,...,k_0\}$. We use the induction hypothesis and \eqref{eq:Rk3} to infer that
$$ \partial^{k-1}_x \Psi^* \in L^2([t,t+1], H^{2}(\R)).$$
Also, we have 
$$ \partial^{k-1}_x \Psi^* \in H^1([t,t+1], L^{2}(\R))$$
using \eqref{eq:psikder} and \eqref{eq:Rk}. This yields, by interpolation,
$$\partial^{k-1}_x \Psi^* \in H^{\frac{2}{3}}([t,t+1], H^{\frac{2}{3}}(\R)).$$
Hence, using the Sobolev embedding theorem, we obtain
\begin{equation}
\label{estm:derpsik}
\partial^{k-1}_x \Psi^* \in L^\infty([t,t+1], L^\infty(\R)) \quad {\rm for \ all} \quad  t \in \R.
\end{equation}
On the other hand, since
$ |\partial_x v^*| \leq | \Psi^*|,$ we have, by \eqref{est:Psi*_L_inf_L_inf}, $\partial_x v^* \in L^\infty([t,t+1], L^\infty(\R)).$ For $k \in \{2,...,k_0 \}$, we differentiate the second equation in \eqref{eq:v-Psi} $k$ times and we use \eqref{estm:derpsik} to obtain
\begin{equation}
\label{estm:derv*}
|\partial_x^k v^*| \leq K \Big( \sum_{j=1}^{k-1} |\partial_x^j \Psi^*| + \sum_{j=0}^{k-2} |\partial_x^j v^*|\Big),
\end{equation}
where $K$ is a positive constant. We infer from \eqref{estm:derpsik} by induction that
\begin{equation}
\label{estm:derv*2}
\partial^{k}_x v^* \in L^\infty([t,t+1], L^\infty(\R)) \quad {\rm for \ all} \quad  t \in \R,
\end{equation}
for all $k \in \{2,...,k_0\}.$ 
Then, we just compute explicitly $R_{k_0+1}(v^*,\Psi^*)$ and we use \eqref{estm:derpsik} and \eqref{estm:derv*2} to obtain
$$\big| R_{k_0+1}(v^*,\Psi^*) \big| \leq A_{k_0+1,\gc,K} \Big( \sum_{j=0}^{k_0+1}  |\partial_x^{j} \Psi^*| + \sum_{j=1}^{k_0} |\partial_x^{j} v^*|\Big).$$
Hence, by \eqref{eq:Rk2} for all $k \leq k_0$, \eqref{estmL2tL2v*exp} and \eqref{eq:Rk3}, we obtain \eqref{eq:Rk} for $k=k_0+1$.
Finally, we introduce \eqref{eq:Rk2} for all $k\leq k_0+1$ and \eqref{estmL2tL2v*exp} for all $k\leq k_0$ into \eqref{estm:derv*} to deduce \eqref{estmL2tL2v*exp} for $k=k_0+1$. This finishes the proof of this step.

In order to finish the proof of Proposition \ref{prop:smooth}, we now turn these $L_{\rm loc}^2$ in time estimates into $L^\infty$ in time estimates, and then into uniform estimates.

\begin{step}
\label{R2}
Let $k \geq 0$. There exists a positive number $A_{k, \gc}$, depending only on $k$ and $\gc$, such that
\begin{equation}
\label{estmLinftL2Psi*exp}
\int_\R \big| \partial_x^k \Psi^*(x + a^*(t), t) \big|^2 e^{ \nu_\gc |x|} \, dx \leq A_{k, \gc},
\end{equation}
for any $t\in \R$. In particular, we have
\begin{equation}
\label{estmLinftLinftPsi*exp}
\big\| \partial_x^k \Psi^*(\cdot + a^*(t), t) e^{ \frac{\nu_\gc}{2}  |\cdot|} \big\|_{L^\infty(\R)} \leq A_{k, \gc},
\end{equation}
for any $t \in \R$, and a further positive constant $A_{k, \gc}$, depending only on $k$ and $\gc$.
\end{step}

Here, we use the Sobolev embedding theorem in time and \eqref{eq:psikder} for the proof. By the Sobolev embedding theorem, we have
\begin{align*}
\big\| \partial_x^k \Psi^*(\cdot + a^*(t), t) e^{\frac{\nu_\gc}{2} |\cdot|} \big\|_{L^2(\R)}^2 \leq & K \Big( \big\| \partial_s \big( \partial_x^k \Psi^*(\cdot + a^*(t), s) e^{\frac{\nu_\gc}{2} |\cdot|} \big) \big\|_{L^2([t - 1, t + 1], L^2(\R))}^2\\
& + \big\| \partial_x^k \Psi^*(\cdot + a^*(t), s) e^{\frac{\nu_\gc}{2} |\cdot|} \big\|_{L^2([t - 1, t + 1],L^2(\R))}^2 \Big),
\end{align*}
while, by \eqref{eq:psikder},
\begin{align*}
\big\| \partial_s \big( \partial_x^k \Psi^*(\cdot + a^*(t), s) e^{\frac{\nu_\gc}{2} |\cdot|} \big) \big\|_{L^2([t - 1, t + 1], L^2(\R))}^2 \leq & 2 \Big( \big\| \partial_x^{k + 2} \Psi^*(\cdot + a^*(t), s) e^{\frac{\nu_\gc}{2} |\cdot|} \big\|_{L^2([t - 1, t + 1], L^2(\R))}^2\\
& + \big\| R_k(\Psi^*)(\cdot + a^*(t), s) e^{\frac{\nu_\gc}{2} |\cdot|} \big\|_{L^2([t - 1, t + 1], L^2(\R))}^2 \Big),
\end{align*}
so that we finally deduce \eqref{estmLinftL2Psi*exp} from \eqref{eq:Rk2} and \eqref{eq:Rk}. Estimate \eqref{estmLinftLinftPsi*exp} follows from applying the Sobolev embedding theorem to \eqref{estmLinftL2Psi*exp}.

Similarly, the function $v^*$ satisfies

\begin{step}
\label{R3}
Let $k \in \N$. There exists a positive number $A_{k, \gc}$, depending only on $k$ and $\gc$, such that
\begin{equation}
\label{estmLinftL2v*exp}
\int_\R \big( \partial_x^k v^*(x + a^*(t), t) \big)^2 e^{ \nu_\gc |x|} \, dx \leq A_{k, \gc},
\end{equation}
and
\begin{equation}
\label{estmLinftLinftv*exp}
\big\| \partial_x^k v^*(\cdot + a^*(t), t) e^{ \frac{\nu_\gc}{2} |\cdot|} \big\|_{L^\infty(\R)} \leq A_{k, \gc},
\end{equation}
for any $t \in \R$.
\end{step}
The proof is similar to the proof of Step \ref{R2} using the first equation in \eqref{eq:v-Psi} instead of \eqref{eq:Psi}. We use the Sobolev embedding theorem to write
\begin{align*}
\big\| \partial_x^k v^*(\cdot + a^*(t), t) e^{\nu_\gc |\cdot|} \big\|_{L^2(\R)}^2 \leq & K \Big( \big\| \partial_s \big(\partial_x^k v^*(\cdot + a^*(t), s) e^{\nu_\gc |\cdot|} \big) \big\|_{L^2([t - 1, t + 1], L^2(\R))}^2\\
& + \big\| \partial_x^k v^*(\cdot + a^*(t), s) e^{\nu_\gc |\cdot|} \big\|_{L^2([t - 1, t + 1],L^2(\R))}^2 \Big).
\end{align*}
By the first equation in \eqref{eq:v-Psi}, \eqref{eq:Rk2}, \eqref{eq:psikder} and \eqref{estm:derv*2}, we have

$$\big\| \partial_s \big(\partial_x^k v^*(\cdot + a^*(t), s) e^{\nu_\gc |\cdot|} \big) \big\|_{L^2([t - 1, t + 1], L^2(\R))}^2 \leq A_\gc.$$
This leads to \eqref{estmLinftL2v*exp}. The uniform bound in \eqref{estmLinftLinftv*exp} is then a consequence of the Sobolev embedding theorem.

Finally, we provide the estimates for the function $w^*$.

\begin{step}
\label{R4}
Let $k \in \N$. There exists a positive number $A_{k, \gc}$, depending only on $k$ and $\gc$, such that
\begin{equation}
\label{estmL2w*exp}
\int_\R \big| \partial_x^k w^*(x + a^*(t), t) \big| ^2 e^{ \nu_\gc |x|} \, dx \leq A_{k, \gc},
\end{equation}
and
\begin{equation}
\label{estmLinftyw*exp}
\big\| \partial_x^k w^*(\cdot + a^*(t), t) e^{ \frac{\nu_\gc}{2} |\cdot|} \big\|_{L^\infty(\R)} \leq A_{k, \gc},
\end{equation}
for any $t \in \R$.
\end{step}
The proof relies on the last two steps. First, we write
$$v^* \Psi^* = - \frac{1}{2} \partial_x \Big( (1 - (v^*)^2)^\frac{1}{2} \exp i \theta^* \Big).$$
Since $(1 - v^*(x, t)^2)^{1/2} \exp i \theta^*(x, t) \to 1$ as $x \to - \infty$ for any $t \in \R$, we obtain the formula
\begin{equation}
\label{eq:theta}
2 F(v^*, \Psi^*) = 1 - (1 - (v^*)^2)^\frac{1}{2} \exp i \theta^*.
\end{equation}
Hence, using \eqref{def:Psi}, we have
\begin{equation}
\label{eq:w^*}
 w^* = 2 \Im \Big( \frac{\Psi^* \big( 1- 2 F(v^*,\Psi ^*)\big)}{1-(v^*)^2} \Big).
\end{equation}
Combining \eqref{eq:max-v} and \eqref{eq:theta}, we recall that
\begin{equation}
\label{estm:v*}
\frac{|1- 2 F(v^*,\Psi ^*)|}{1- (v^*)^2} \leq A_\gc .
\end{equation}
Hence, we obtain
$$|w^*| \leq A_\gc |\Psi^*|.$$
Then, \eqref{estmL2w*exp} and \eqref{estmLinftyw*exp} follow from \eqref{estmLinftL2Psi*exp} and \eqref{estmLinftLinftPsi*exp} for $k=0$. 
For $k \geq 1$, we differentiate \eqref{eq:w^*} $k$ times with respect to the space variable, and using \eqref{estmLinftLinftPsi*exp}, \eqref{estmLinftLinftv*exp} and \eqref{estm:v*}, we are led to
$$|\partial_x^{k} w^* | \leq A_{k,\gc} \Big( \sum_{j=0}^{k} |\partial_x^{j} \Psi^*| + \sum_{j=1}^{k-1} |\partial_x^{j} v^*| \Big).$$
We finish the proof of this step using Steps \ref{R2} and \ref{R3}.
This achieves the proof of Proposition \ref{prop:smooth}.
\qed
\section{Proof of the Liouville theorem}

\subsection{Proof of Proposition \ref{prop:monou1}}
\label{sec:prop:monou1}

First, by \eqref{eq:smooth} and the explicit formula for $v_c$ and $w_c$ in \eqref{form:vc}, there exists a positive number $A_{k, \gc}$ such that
\begin{equation}
\label{smooth:psi*}
\int_\R \Big( \big( \partial_x^k \eps_v^*(x, t) \big)^2 + \big( \partial_x^k \eps_w^*(x, t) \big)^2 \Big) e^{ \nu_\gc |x|} \, dx \leq A_{k, \gc},
\end{equation}
for any $k \in \N$ and any $t \in \R$. In view of the formulae of $\boH_c$ in \eqref{def:boHc} and for $u^*$ in \eqref{def:u*}, a similar estimate holds for $u^*$, for a further choice of the constant $A_{k, \gc}$. As a consequence, we are allowed to differentiate with respect to the time variable the quantity
$$\boI^*(t) := \int_\R x u^*_1(x, t) u^*_2(x, t) \, dx,$$
in the left-hand side of \eqref{eq:petitplateau}. Moreover, we can compute
\begin{equation}
\label{dt_I*}
\begin{split}
\frac{d}{dt} \Big( \boI^* \Big) = & - 2 \int_\R \mu \big\langle \boH_{c^*}(\partial_x u^*), u^* \big\rangle_{\R^2} + \int_\R \mu \big\langle \boH_{c^*} \big( J \boR_{c^*} \eps^* \big), u^* \big\rangle_{\R^2} \\
                                  & - \big( c^* \big)' \int_\R \mu \big\langle \boH_{c^*}(\partial_c Q_{c^*}), u^* \big\rangle_{\R^2}+ \big( c^* \big)' \int_\R \mu \big\langle \partial_c \boH_{c^*}(\eps^*), u^* \big\rangle_{\R^2} \\
                                  & + \big( (a^*)' - c^* \big) \int_\R \mu \big\langle \boH_{c^*}(\partial_x \eps^*), u^* \big\rangle_{\R^2},
\end{split}
\end{equation}
where we have set $\mu(x) = x$ for any $x \in \R$.

At this stage, we split the proof into five steps. The proof of these steps is similar to the proof of Proposition 7 in \cite{BetGrSm2}. We first show
\setcounter{step}{0}
\begin{step}
\label{I1}
There exist two positive numbers $A_1$ and $R_1$, depending only on $\gc$, such that
\begin{equation}
\label{boI_1*}
\boI_1^*(t) := - 2 \int_\R \mu \big\langle \boH_{c^*}(\partial_x u^*), u^* \big\rangle_{\R^2} \geq \frac{1 - \gc^2}{8} \big\| u^*(\cdot, t) \big\|_{X(\R)}^2 - A_1 \big\| u^*(\cdot, t) \big\|_{X(B(0, R_1))}^2,
\end{equation}
for any $t \in \R$.
\end{step}

We introduce the explicit formulae of the operator $\boH_{c^*}$ in the definition of $\boI_1^*(t)$ to obtain
\begin{align*}
\boI_1^*(t) = & 2 \int_\R \mu \partial_x \Big( \frac{ \partial_{xx} u^*_1 }{1 - v^2_{c^*}} \Big) u^*_1 - 2 \int_\R \mu \big( 1-(c^*)^2 - (5 + (c^*)^2) v^2_{c^*} + 2 v^4_{c^*} \big) \frac{\partial_x u^*_1}{(1 - v^2_{c^*})^2} u_1^* \\
            & + 2 \int_\R \mu c^* \frac{1+v^2_{c^*}}{1-v^2_{c^*}} (\partial_x u_2^* ) u_1^* - 2 \int_\R \mu (c^*)^2 \frac{(1+v^2_{c^*})^2}{(1-v^2_{c^*})^3} (\partial_x u_1^*) u_1^* \\
            & + 2 \int_\R \mu c^* \frac{1+v^2_{c^*}}{1-v^2_{c^*}} (\partial_x u_1^*) u_2^* - 2 \int_\R \mu \big(1-v^2_{c^*}\big) (\partial_x u_2^*) u_2^*.
\end{align*}

Integrating by parts each term, we obtain
%
$$\boI_1^*(t) = \int_\R \iota_1^*(x, t) \, dx,$$
with
\begin{align*}
\iota_1^* = & \Big( \frac{ 2 }{1 - v^2_{c^*}} + 2 x \frac{ \partial_x v_{c^*} \ v_{c^*}}{1 - v^2_{c^*}} \Big) \big(\partial_x u^*_1 \big)^2 - 2 c^* \Big( \frac{1+v^2_{c^*}}{1-v^2_{c^*}} + \frac{4 x \partial_x v_{c^*} \ v_{c^*}}{\big( 1-v^2_{c^*} \big)^2} \Big) u_2^* u_1^* \\
            & + \big( 1-v^2_{c^*} - 2 x \partial_x v_{c^*} \ v_{c^*} \big) \big( u_2^* \big) ^2 +  \frac{1+2\big( (c^*)^2 -3 \big) v^2_{c^*} + \big( 2 (c^*)^2 -3 \big) v^4_{c^*} - 2 v^6_{c^*}}{\big( 1-v^2_{c^*} \big)^3} \big( u_1^* \big) ^2 \\
            &+ 4 x \partial_x v_{c^*} \ v_{c^*} \frac{\big( (c^*)^2 -3 \big) + \big( 2 (c^*)^2 -3 \big) v^2_{c^*} - 3 v^4_{c^*}}{\big( 1-v^2_{c^*} \big)^4} \big( u_1^* \big) ^2 .
\end{align*}
Let $\delta$ be a small positive number. We next use the exponential decay of the function $v_c$ and its derivatives to guarantee the existence of a radius $R$, depending only on $\gc$ and $\delta$ (in view of the bound on $c^* - \gc$ in \eqref{eq:modul0bis}), such that
$$\iota_1^*(x, t) \geq \big( 2 - \delta\big) \big( \partial_x u_1^* \big)^2(x,t) + \big( \frac{1-\gc ^2}{4} - \delta\big) \big( (u_1^*)^2(x,t) + (u_2^*)^2(x,t) \big),$$
when $|x| \geq R$.

Then, we choose $\delta$ small enough and fix the number $R_1$ according to the value of the corresponding $R$, to obtain
\begin{equation}
\label{int-iota_1*}
\int_{|x| \geq R_1} \iota_1^*(x, t) \, dx \geq \frac{1 - \gc^2}{8} \int_{|x| \geq R_1} \big( (\partial_x u_1^*(x, t))^2 + u_1^*(x, t)^2 + u_2^*(x, t)^2 \big) \, dx.
\end{equation}
On the other hand, it follows from \eqref{form:vc}, and again \eqref{eq:modul0}, that
$$\int_{|x| \leq R_1} \iota_1^*(x, t) \, dx \geq \Big( \frac{1 - \gc^2}{8} - A_1 \Big) \int_{|x| \leq R_1} \big( (\partial_x u_1^*(x, t))^2 + u_1^*(x, t)^2 + u_2^*(x, t)^2 \big) \, dx,$$
for a positive number $A_1$ depending only on $\gc$. Combining with \eqref{int-iota_1*}, we obtain \eqref{boI_1*}.

\begin{step}
\label{I2}
There exist two positive numbers $A_2$ and $R_2$, depending only on $\gc$, such that
\begin{equation}
\label{boI_2*}
\big| \boI_2^*(t) \big| := \bigg| \int_\R \mu \big\langle \boH_{c^*} \big( J \boR_{c^*} \eps^* \big), u^* \big\rangle_{\R^2} \bigg| \leq \frac{1 - \gc^2}{64} \big\| u^*(\cdot, t) \big\|_{X(\R)}^2 + A_2 \big\| u^*(\cdot, t) \big\|_{X(B(0, R_2))}^2,
\end{equation}
for any $t \in \R$.
\end{step}
We refer to the proof of Step 2 in the proof of Proposition 7 in \cite{BetGrSm2} for mare details.

Next, we infer from \eqref{eq:modul1}, \eqref{eps:u}, the explicit formula of $\boH_{c^*}$ in \eqref{def:boHc} and the exponential decay of the function $\partial_c Q_{c^*}$ and its derivatives, that
\begin{step}
\label{I4}
There exist two positive numbers $A_3$ and $R_3$, depending only on $\gc$, such that
\begin{equation}
\label{boI_4*}
\big| \boI_4^*(t) \big| := \bigg| (c^*)' \int_\R \mu \big\langle \boH_{c^*}(\partial_c Q_{c^*}), u^* \big\rangle_{\R^2} \bigg| \leq \frac{1 - \gc^2}{64} \big\| u^*(\cdot, t) \big\|_{X(\R)}^2 + A_3 \big\| u^*(\cdot, t) \big\|_{X(B(0, R_3))}^2,
\end{equation}
for any $t \in \R$.
\end{step}
We decompose the real line into two regions $[-R,R]$ and its complement for any $R >0$. We use the fact that $|x|\leq e^\frac{\nu_\gc |x|}{4}$ for all $|x| \geq R$, to write
\begin{equation*}
\begin{split}
\big| \boI_4^*(t) \big| \leq & R |(c^*)'(t)| \int_{|x| \leq R} \big| \boH_{c^*(t)} (\partial_c Q_{c^*(t)})(x) \big| \big| u^*(x, t) \big| \, dx\\
& + \delta |(c^*)'(t)| \int_{|x| \geq R} \big| \boH_{c^*(t)} (\partial_c Q_{c^*(t)})(x) \big| \big| u^*(x, t) \big| e^\frac{\nu_\gc |x|}{4} \, dx,
\end{split}
\end{equation*}
for any $t \in \R$. We deduce from \eqref{eq:modul1}, the explicit formula of $\boH_{c^*}$ in \eqref{def:boHc} and the exponential decay of the function $\partial_c Q_{c^*}$ and its derivatives that
$$\big| \boI_4^*(t) \big| \leq A_\gc \Big( R \big\| u^*(\cdot, t) \big\|_{X(B(0, R))} + \delta \big\| u^*(\cdot, t) \big\|_{X(\R)} \Big) \big\| \eps^*(\cdot, t) \big\|_{L^2(\R)^2},$$
for any $t \in \R$. Hence, by \eqref{eps:u},
$$\big| \boI_4^*(t) \big| \leq A_\gc \Big( \frac{R^2}{\delta} \big\| u^*(\cdot, t) \big\|_{X(B(0, R))}^2 + 2 \delta \big\| u^*(\cdot, t) \big\|_{X(\R)}^2 \Big).$$
We choose $\delta$ so that $2 A_\gc \delta \leq (1 - \gc^2)/64$, and we denote by $R_4$ the corresponding number $R$, we obtain \eqref{boI_4*}, with $A_4 = A_\gc R_4^2/\delta$.

Similarly, we use \eqref{eq:modul1}, \eqref{eq:modul0bis} and \eqref{eps:u} to obtain
\begin{step}
\label{I3}
There exists two positive numbers $A_4$ and $R_4$, depending only on $\gc$, such that
\begin{equation}
\label{boI_3*}
\big| \boI_3^*(t) \big| := \bigg| (c^*)' \int_\R \mu \big\langle \partial_c \boH_{c^*}(\eps^*), u^* \big\rangle_{\R^2} \bigg| \leq \frac{1 - \gc^2}{64} \big\| u^*(\cdot, t) \big\|_{X(\R)}^2 + A_4 \big\| u^*(\cdot, t) \big\|_{X(B(0, R_4))}^2,
\end{equation}
for any $t \in \R$.
\end{step}

We argue as in Steps \ref{I4} to show

\begin{step}
\label{I5}
There exist two positive numbers $A_5$ and $R_5$, depending only on $\gc$, such that
\begin{equation}
\label{boI_5*}
\begin{split}
\big| \boI_5^*(t) \big| &:= \bigg| \big( (a^*)' - c^* \big) \int_\R \mu \big\langle \boH_{c^*}(\partial_x \eps^*), u^* \big\rangle_{\R^2} \bigg| \\
                        & \leq \frac{1 - \gc^2}{64} \big\| u^*(\cdot, t) \big\|_{X(\R)}^2 + A_5 \big\| u^*(\cdot, t) \big\|_{X(B(0, R_5))}^2,
\end{split}
\end{equation}
for any $t \in \R$.
\end{step}

Finally, combining the estimates in Steps \ref{I1} to \ref{I5} with the identity \eqref{dt_I*}, we obtain
$$\frac{d}{dt} \Big( \boI^*(t) \Big) \geq \frac{1 - \gc^2}{16} \big\| u^*(\cdot, t) \big\|_{X(\R)}^2 - \big( A_1+A_2+ A_3 + A_4+A_5 \big) \big\| u^*(\cdot, t) \big\|_{X(B(0, R_*))}^2,$$
this allow us to conclude the proof of \eqref{eq:petitplateau}
with $R_* = \max \{ R_1, R_2,R_3,R_4,R_5 \}$ and $A_* = A_1 + A_2 + A_3 + A_4 +A_5$.

\subsection{Proof of Lemma \ref{lem:loc-coer}}

When $u \in H^3(\R)\times H^1(\R)$, the function $\partial_x u$ is in the space $H^2(\R)\times L^2(\R)$ which is the domain of $\boH_c$. The scalar product in the right-hand side of \eqref{eq:loc-virial} is well-defined in view of \eqref{def:Mc}. Next, we use the formula for $\boH_c$ in \eqref{def:boHc} to express $G_c(u)$ as
\begin{equation}
\label{int=int0}
\begin{split}
\big\langle & S M_c u, \boH_c(-2 \partial_x u) \big\rangle_{L^2(\R)^2} \\
= & 2 \int_\R \frac{\partial_x v_c}{v_c} \bigg( \frac{ 1 - c^2 - (5 + c^2) v_c^2 + 2 v_c^4 }{(1 - v_c^2)^2} + c^2 \frac{(1 + v_c^2)^2}{(1 - v_c^2)^3} - 2 c^2 \frac{v^2_c (1 + v_c^2)}{(1 - v_c^2)^3} \bigg) u_1 \partial_x u_1\\
& - 2 \int_\R \frac{\partial_x v_c}{v_c} \partial_x \Big( \frac{\partial_{xx} u_1}{1 - v^2_c} \Big) + 2 \int_\R \frac{\partial_x v_c (1 - v^2_c)}{v_c} u_2 \partial_x u_2\\
& + 2 c \int_\R \bigg( 2 \frac{ v_c \partial_x v_c}{1 - v^2_c} u_1 \partial_x u_2 - \frac{\partial_x v_c (1+v_c^2)}{v_c (1 - v^2_c)} \partial_x \big( u_1 u_2 \big) \bigg).
\end{split}
\end{equation}
We recall that $v_c$ solves the equation
\begin{equation}
\label{eq:vc}
\partial_{xx} v_c = (1 - c^2 -2 v_c^2) v_c,
\end{equation}
which leads to
\begin{equation}
\label{part_vc}
(\partial_x v_c)^2 = (1 - c^2 - v_c^2) v_c^2 , \quad {\rm and} \quad \partial_x \Big( \frac{\partial_x v_c}{v_c} \Big) = - v^2_c.
\end{equation}
Then, the third integral in the right-hand side of \eqref{int=int0} can be written as
\begin{equation}
\label{int=int}
2 \int_\R \frac{\partial_x v_c (1 - v^2_c)}{v_c} u_2 \partial_x u_2 = \int_\R \mu_c u_2^2,
\end{equation}
with $\mu_c := 2 (\partial_x v_c)^2 + (1 - v_c^2) v_c^2$. Similarly, the last integral is given by
\begin{equation}
\label{int=int1}
\int_\R \bigg( 2 \frac{ v_c \partial_x v_c}{1 - v^2_c} u_1 \partial_x u_2 - \frac{\partial_x v_c (1+v_c^2)}{v_c (1 - v^2_c)} \partial_x \big( u_1 u_2 \big) \bigg) = - \int_\R \Big( v^2_c u_1 u_2 + 2 \frac{v_c \partial_x v_c}{1 - v_c^2} u_2 \partial_x u_1 \Big).
\end{equation}
Combining \eqref{int=int} and \eqref{int=int1} with \eqref{int=int0}, we obtain the identity
$$\big\langle S M_c u, \boH_c(-2 \partial_x u) \big\rangle_{L^2(\R)^2} = I + \int_\R \mu_c \Big( u_2 - \frac{c v^2_c}{\mu_c} u_1 - \frac{2c v_c \partial_x v_c}{ \mu_c (1 - v^2_c)} \partial_x u_1 \Big)^2,$$
where
\begin{align*}
I = & \int_\R 2 \bigg( \frac{\partial_x v_c}{v_c} \Big( \frac{ 1 - c^2 - (5 + c^2) v_c^2 + 2 v_c^4 }{(1 - v_c^2)^2} + c^2 \frac{1 + v_c^2}{(1 - v_c^2)^2} \Big) -2 c^2 \frac{v^3_c \partial_x v_c}{\mu_c (1 - v^2_c)} \bigg) u_1 \partial_x u_1\\
& - \int_\R \frac{\partial_x v_c}{v_c} u_1 \partial_x \Big( \frac{\partial_{xx} u_1}{1 - v^2_c} \Big) - c^2 \int_\R \frac{v_c^4}{\mu_c} u_1^2 - 4 c^2 \int_\R \frac{(\partial_x v_c)^2 v^2_c}{\mu_c (1 - v^2_c)^2} (\partial_x u_1)^2.
\end{align*}
Using \eqref{eq:vc} and \eqref{part_vc}, we finally deduce that
$$I = \frac{3}{2} \int_\R \frac{v_c^4}{\mu_c} \Big( \partial_x u_1 - \frac{\partial_x v_c}{v_c} u_1 \Big)^2,$$
which finishes the proof of \eqref{eq:loc-virial}.

\subsection{Proof of Proposition \ref{prop:coer-Gc}}

We first rely on \eqref{form:vc} and \eqref{eq:loc-virial} to check that the quadratic form $G_c$ is well-defined and continuous on $X(\R)$. Next, setting 
\begin{equation}
\label{def:v}
v = (v_c u_1, v_c u_2), 
\end{equation}
and using \eqref{eq:vc}, we can express it as
\begin{equation}
\label{G_c=K_c}
G_c(u) = K_c(v) := \int_\R \frac{v^2_c}{\mu_c} \Big( \partial_x v_1 - \frac{2 \partial_x v_c}{ v_c} v_1 \Big)^2 + \int_\R \frac{\mu_c}{v^2_c} \Big( v_2 + \frac{c \lambda_c}{ \mu_c (1 - v^2_c)} v_1 - 2\frac{c v_c \partial_x v_c}{ \mu_c (1 - v^2_c)} \partial_x v_1 \Big)^2,
\end{equation}
where we have set $\lambda_c := -\mu_c + 4 (\partial_{x} v_c)^2$. From \eqref{ker-Gc} and \eqref{def:v} we deduce that 
\begin{equation}
\label{ker:Kc}
\Ker(K_c)= \Span(v_cQ_c).
\end{equation}
Let $w$ be the pair defined in the following way
\begin{equation*}
w = \Big( v_1, v_2 - 2\frac{c v_c \partial_x v_c}{ \mu_c (1 - v^2_c)} \partial_x v_1 \Big).
\end{equation*}
We compute
\begin{equation}
\label{K_c=boT_c-w}
K_c(v) = \big\langle \boT_c(w), w \big\rangle_{L^2(\R)^2},
\end{equation}
with
\begin{equation}
\label{def:boT_c}
\begin{split}
&\boT_c(w) = \\
&\begin{pmatrix} - 3\partial_x \Big( \frac{v^2_c}{ \mu_c} \partial_x w_1 \Big) + \Big( \frac{8 v^4_c (\partial_x v_c)^2 - 2 v^6_c (1-v^2_c)}{ \mu^2_c } + \frac{4 (\partial_x v_c)^2}{ \mu_c } + \frac{c^2 (2c^2-1+v^2_c)^2 v^2_c}{ \mu_c (1 - v^2_c)^2} \Big) w_1 - \frac{c (2c^2-1+v^2_c)}{(1 - v^2_c)} w_2 \\ - \frac{c (2c^2-1+v^2_c)}{(1 - v^2_c)} w_1 + \frac{\mu_c}{v^2_c} w_2 \end{pmatrix}.
\end{split}
\end{equation}
The operator $\boT_c$ in \eqref{def:boT_c} is self-adjoint on $L^2(\R)^2$, with domain $\Dom(\boT_c) = H^2(\R) \times L^2(\R)$. In addition, combining \eqref{G_c=K_c} with \eqref{K_c=boT_c-w} we deduce that $\boT_c$ is non-negative, with a kernel equal to
$$\Ker(\boT_c) = \Span \Big\{ \Big( v^2_c, \frac{2c v^2_c (\partial_x v_c)^2}{\mu_c (1 - v^2_c)} \Big)\Big\}.$$
At this stage, we divide the proof into three steps.
\setcounter{step}{0}
\begin{step}
\label{G1}
Let $c \in (- 1, 1) \setminus \{ 0 \}$. There exists a positive number $\Lambda_1$, depending continuously on $c$, such that
\begin{equation}
\label{postiv:boT_c}
\langle \boT_c(w), w \rangle_{L^2(\R)^2} \geq \Lambda_1 \int_\R \Big( w_1^2 + w_2^2 \Big),
\end{equation}
for any pair $w \in X^1(\R)$ such that
\begin{equation}
\label{cond:ortho:w}
\Big\langle w, \Big( v^2_c, \frac{2c v^2_c (\partial_x v_c)^2}{\mu_c (1 - v^2_c)} \Big) \Big\rangle_{L^2(\R)^2} = 0.
\end{equation}
\end{step}

We claim that the essential spectrum of $\boT_c$ is given by
\begin{equation}
\label{spec:ess:boT}
\sigma_{\rm ess}(\boT_c) = \big[ \tau_c, + \infty \big),
\end{equation}
with
\begin{equation}
\label{def:tau_c}
\tau_c = \tau_{1,c} - \frac{1}{2} \tau_{2,c}^\frac{1}{2} > 0.
\end{equation}
Here, we have set 
$$\tau_{1,c} = \frac{4(1 - c^2)+c^2(2c^2 - 1)^2}{2(3-2 c^2)}+ \frac{3-2 c^2}{2}$$
and
$$\tau_{2,c} = \Big( \frac{4(1 - c^2)+c^2(2c^2 - 1)^2}{3-2 c^2} - (3-2 c^2)\Big)^2 + 4 c^2(2c^2 - 1)^2.$$
In particular, $0$ is an isolated eigenvalue in the spectrum of $\boT_c$. Inequality \eqref{postiv:boT_c} follows with $\Lambda_1$ either equal to $\tau_c$, or to the smallest positive eigenvalue of $\boT_c$. In view of the analytic dependence on $c$ of the operator $\boT_c$, $\Lambda_1$ depends continuously on $c$ .

Now, let us prove \eqref{spec:ess:boT}. We rely on the Weyl criterion. It follows from \eqref{def:mu_c} and \eqref{eq:vc} that
$$\frac{\mu_c(x)}{v^2_c(x)} \to 3 - 2c^2, \quad {\rm and} \quad \frac{(\partial_x v_c)^2(x)}{\mu_c(x)} \to \frac{1-c^2}{3-2c^2},$$
as $x \to \pm \infty$. Coming back to \eqref{def:boT_c}, we introduce the operator $\boT_\infty$ given by
$$\boT_\infty(w) = \begin{pmatrix} - \frac{3}{3 - 2c^2} \partial_{xx} w_1 + \frac{4(1-c^2)+ c^2 (2c^2-1)^2}{3-2c^2} w_1 - c (2c^2-1) w_2 \\ - c (2c^2-1) w_1 +  (3 - 2 c^2) w_2 \end{pmatrix}.$$
By the Weyl criterion, the essential spectrum of $\boT_c$ is equal to the spectrum of $\boT_\infty$.

We next apply again the Weyl criterion to establish that a real number $\lambda$ belongs to the spectrum of $\boT_\infty$ if and only if there exists a complex number $\xi$ such that
$$\lambda^2 - \Big( \frac{3}{3 - 2c^2} |\xi|^2 + \frac{4(1-c^2)+ c^2 (2c^2-1)^2}{3-2c^2} + 3 - 2 c^2 \Big) \lambda + 3 |\xi|^2 + 4 (1-c^2) = 0.$$
This is the case if and only if
\begin{align*}
\lambda = &\frac{4(1 - c^2)+c^2(2c^2 - 1)^2 +3 |\xi|^2}{2(3-2 c^2)} + \frac{3-2 c^2}{2} \\ 
          & \pm \frac{1}{2}\Big( \Big( \frac{4(1 - c^2)+c^2(2c^2 - 1)^2+ 3|\xi|^2}{3-2 c^2} - (3-2 c^2)\Big)^2 + 4 c^2(2c^2 - 1)^2 \Big)^\frac{1}{2}.
\end{align*} 
This leads to
$$\sigma_{\rm ess}(\boT_c) = \sigma(\boT_\infty) = \big[ \tau_c, + \infty \big),$$
with $\tau_c$ as in \eqref{def:tau_c}. This completes the proof of Step \ref{G1}.

\begin{step}
\label{G2}
There exists a positive number $\Lambda_2$, depending continuously on $c$, such that
\begin{equation}
\label{pos:K-c}
K_c(v) \geq \Lambda_2 \int_\R \big( (\partial_x v_1)^2 + v_1^2 + v_2^2\big) ,
\end{equation}
for any pair $v \in X^1(\R)$ such that
\begin{equation}
\label{orth:v:v_c-1:chi_c}
\langle v, v^{-1}_c S \chi_c \rangle_{L^2(\R)^2} = 0.
\end{equation}
\end{step}

We start by improving the estimate in \eqref{postiv:boT_c}. Given a pair $w \in X^1(\R)$, we observe that
$$\bigg| \langle \boT_c(w), w \rangle_{L^2(\R)^2} - 3 \int_\R \frac{v^2_c}{\mu_c} (\partial_x w_1)^2 \bigg| \leq A_c \int_\R (w_1^2 + w_2^2).$$
Here and in the sequel, $A_c$ refers to a positive number, depending continuously on $c$. For $0 < \tau < 1$, we have
$$\langle \boT_c(w), w \rangle_{L^2(\R)^2} \geq \big( 1 - \tau \big) \langle \boT_c(w), w \rangle_{L^2(\R)^2} + 3 \tau \int_\R \frac{v^2_c}{\mu_c} (\partial_x w_1)^2 - A_c \tau \int_\R (w_1^2 + w_2^2).$$
Since $v^2_c/\mu_c \geq 1/(3 - 2c^2)$, this yields
$$\langle \boT_c(w), w \rangle_{L^2(\R)^2} \geq \Big( \big( 1 - \tau \big) \Lambda_1 - A_c \tau \Big) \int_\R (w_1^2 + w_2^2) + \frac{3 \tau}{3 - 2c^2} \int_\R (\partial_x w_1)^2,$$
under condition \eqref{cond:ortho:w}. For $\tau$ small enough, this leads to
\begin{equation}
\label{postiv:boT_c-w,w}
\langle \boT_c(w), w \rangle_{L^2(\R)^2} \geq A_c \int_\R \big( (\partial_x w_1)^2 + w_1^2 + w_2^2 \big),
\end{equation}
when $w$ satisfies condition \eqref{cond:ortho:w}.

Since the pair $w$ depends on the pair $v$, we can write \eqref{postiv:boT_c-w,w} in terms of $v$. By \eqref{K_c=boT_c-w}, $K_c(v)$ is equal to the left-hand side of \eqref{postiv:boT_c-w,w}. We deduce that \eqref{postiv:boT_c-w,w} may be expressed as
$$K_c(v) \geq A_c \int_\R \Big( (\partial_x v_1)^2 + v_1^2 \Big) + A_c \int_\R \Big( v_2 - \frac{2c v_c (\partial_x v_c)}{\mu_c (1 - v^2_c)} \partial_x v_1 \Big)^2.$$
We recall that, given two vectors $a$ and $b$ in a Hilbert space $H$, we have
$$\big\| a - b \big\|_H^2 \geq \tau \big\| a \big\|_H^2 - \frac{\tau}{1 - \tau} \big\| b \big\|_H^2,$$
for any $0 < \tau < 1$. Then, we deduce that
$$K_c(v) \geq A_c \int_\R \Big( (\partial_x v_1)^2 + v_1^2 + \tau v_2^2 \Big) - \frac{\tau A_c}{1 - \tau} \int_\R \Big( \frac{v_c (\partial_x v_c)}{\mu_c (1 - v^2_c)} \partial_x v_1 \Big)^2.$$
We choose $\tau$ small enough so that we can infer from \eqref{form:vc} that
\begin{equation}
\label{postiv:K_c-v}
K_c(v) \geq A_c \int_\R \big( (\partial_x v_1)^2 + v_1^2 + v_2^2 \big),
\end{equation}
when $w$ satisfies condition \eqref{cond:ortho:w}, i.e. when $v$ is orthogonal to the pair
\begin{equation}
\label{def:gv_c}
\gv_c = \Big( v^2_c - \partial_x\Big( \frac{2c v_c^2 (\partial_x v_c)^2}{\mu_c (1 - v^2_c)} \Big), \frac{2c v_c^2 (\partial_x v_c)^2}{\mu_c (1 - v^2_c)} \Big).
\end{equation}

Next, we verify that \eqref{postiv:K_c-v} remains true, decreasing possibly the value of $A_c$, when we replace this orthogonality condition by
\begin{equation}
\label{orth:v,v_cQ_c}
\langle v, v_c Q_c \rangle_{L^2(\R)^2} = 0.
\end{equation}
We remark that
$$\langle \gv_c, v_c Q_c \rangle_{L^2(\R)^2} \neq 0.$$ Indeed, we would deduce from \eqref{postiv:K_c-v} that
$$0 = K_c(v_c Q_c) \geq A_c \int_\R \big( (\partial_x v^2_c)^2 + v_c^4 + (v_c w_c)^2 \big) > 0,$$
which is impossible. In addition, the number $\langle \gv_c, v_c Q_c \rangle_{L^2(\R)^2}$ depends continuously on $c$ in view of \eqref{def:gv_c}. Given a pair $\tilde{v}$ satisfying \eqref{orth:v,v_cQ_c}, we denote by $\lambda$ the real number such that $\gv = \lambda v_c Q_c + \tilde{v}$ is orthogonal to $\gv_c$. Since $v_c Q_c$ belongs to the kernel of $K_c$, we obtain using \eqref{postiv:K_c-v},
\begin{equation}
\label{positiv:K_c:tilde-v}
K_c(\tilde{v}) = K_c(\gv) \geq A_c \int_\R \big( (\partial_x \gv_1)^2 + \gv_1^2 + \gv_2^2 \big).
\end{equation}
On the other hand, since $\tilde{v}$ satisfies \eqref{orth:v,v_cQ_c}, we have
$$\lambda = \frac{\langle \gv, v_c Q_c \rangle_{L^2(\R)^2}}{\| v_c Q_c \|_{L^2(\R)^2}^2}.$$
Using the Cauchy-Schwarz inequality, this yields
$$\lambda^2 \leq A_c \bigg( \int_\R \big( v^4_c + (v_c w_c)^2 \big) \bigg) \bigg( \int_\R \big(\gv_1^2 + \gv_2^2 \big) \bigg),$$
hence, by \eqref{form:vc} and \eqref{positiv:K_c:tilde-v},
$$\lambda^2 \leq A_c K_c(\gv) = A_c K_c(\tilde{v}).$$
Using \eqref{positiv:K_c:tilde-v}, this leads to
\begin{align*}
\int_\R \big( (\partial_x \tilde{v}_1)^2 + \tilde{v}_1^2 + \tilde{v}_2^2 \big)\leq & 2 \bigg( \lambda^2 \int_\R v^2_c \big( (\partial_x v_c)^2 + v_c^2 + w_c^2 \big) + \int_\R \big( (\partial_x \gv_1)^2 + \gv_1^2 + \gv_2^2 \big) \bigg)\\
                                                 & \leq A_c K_c(\tilde{v}),
\end{align*}
We finish the proof of this step applying again the same argument. We write $v = \lambda v_c S Q_c + \tilde{v}$, with $\langle \tilde{v}, v_c Q_c \rangle_{L^2(\R)^2} = 0$. Since $v_c Q_c$ belongs to the kernel of $K_c$, we infer from the same argument that
\begin{equation}
\label{positiv:K_c:v}
K_c(v) = K_c(\tilde{v}) \geq \Lambda_2 \int_\R (\partial_x \tilde{v}_1)^2 + \tilde{v}_1^2 + \tilde{v}_2^2.
\end{equation}
Using the orthogonality condition in \eqref{orth:v:v_c-1:chi_c}, we obtain
$$\lambda = - \frac{\langle \tilde{v}, v^{-1}_c S \chi_c \rangle_{L^2(\R)^2}}{\langle Q_c, S \chi_c \rangle_{L^2(\R)^2}}.$$
By the Cauchy-Schwarz inequality, we are led to
$$\lambda^2 \leq A_c \|v^{-1}_c S \chi_c\|^2_{L^2 \times L^2} \int_\R \big(\tilde{v}_1^2 + \tilde{v}_2^2 \big).$$
Invoking the exponential decay of $\chi_c$ in \eqref{dec:chi-c}, we deduce
$$ \|v^{-1}_c S \chi_c\|^2_{L^2 \times L^2} \leq A_c.$$
As a consequence, we can derive from \eqref{positiv:K_c:v} that
$$\lambda^2 \leq A_c K_c(\tilde{v}) = A_c K_c(v).$$
Combining again with \eqref{positiv:K_c:v}, we are led to
\begin{align*}
\int_\R \big( (\partial_x v_1)^2 + v_1^2 + v_2^2\big)  \leq & 2 \bigg( \lambda^2 \int_\R v^2_c \big( (\partial_x v_c)^2 + v_c^2 + w_c^2 \big) + \int_\R \big( (\partial_x \tilde{v}_1)^2 + \tilde{v}_1^2 + \tilde{v}_2^2\big) \bigg)\\
                                            \leq & A_c K_c(v).
\end{align*}
which completes the proof of Step \ref{G2}.

\begin{step}
\label{G3}
End of the proof.
\end{step}
Since the pair $v$ depends on the pair $u$ as in \eqref{def:v}, we can write \eqref{pos:K-c} in terms of $u$. The left hand side of \eqref{pos:K-c} is equal to $G_c(u)$ by \eqref{G_c=K_c}. Moreover, for the right-hand side, we have
$$\int_\R \big( (\partial_x v_1)^2 + v_1^2 + v_2^2\big) = \int_\R v_c^2 \big( (\partial_x u_1)^2 + (2v_c^2 + c^2)u_1^2 + u_2^2 \big).$$
We deduce that \eqref{pos:K-c} may be written as
\begin{equation}
\label{pos:G-c}
G_c(u) \geq A_c \int_\R v_c^2 \big( (\partial_x u_1)^2 + u_1^2 + u_2^2 \big),
\end{equation} 
when $v_cu$ verifies the orthogonality condition \eqref{orth:v:v_c-1:chi_c}, which means that $u$ verifies the orthogonality condition \eqref{eq:ortho-u}. We recall that
$$v_c(x) \geq A_c e^{- |x|},$$
by \eqref{form:vc}, which is sufficient to obtain \eqref{eq:coer-Gc}. This completes the proof of Proposition \ref{prop:coer-Gc}.

\subsection{Proof of Proposition \ref{prop:monou2}}

First we check that we are allowed to differentiate the quantity
$$\boJ^*(t) := \big\langle M_{c^*(t)} u^*(\cdot, t), u^*(\cdot, t) \big\rangle_{L^2(\R)^2}.$$
Indeed, by \eqref{def:u*}, \eqref{smooth:psi*}, and \eqref{def:boHc}, there exists a positive number $A_{k, \gc}$ such that
\begin{equation}
\label{int-der-u*-exp}
\int_\R \Big( \big( \partial_x^k u_1^*(x, t) \big)^2 + \big( \partial_x^k u_2^*(x, t) \big)^2 \Big) e^{ \nu_\gc |x|} \, dx \leq A_{k, \gc}.
\end{equation}
Next, using \eqref{eq:pouru*} and \eqref{def:Mc}, we obtain
\begin{equation}
\label{dt-boJ}
\begin{split}
\frac{d}{dt} \big( \boJ^* \big) = & 2 \big\langle S M_{c^*} u^*, \boH_{c^*}(J S u^*) \big\rangle_{L^2(\R)^2} + 2 \big\langle S M_{c^*} u^*, \boH_{c^*}(J \boR_{c^*} \eps^*) \big\rangle_{L^2(\R)^2}\\
                                  & + 2 \big( (a^*)' - c^* \big) \big\langle S M_{c^*} u^*, \boH_{c^*}(\partial_x \eps^*) \big\rangle_{L^2(\R)^2} \\ 
                                  & - 2 \big( c^* \big)' \big\langle S M_{c^*} u^*, \boH_{c^*}(\partial_c Q_{c^*}) \big\rangle_{L^2(\R)^2}\\ 
                                  & + \big( c^* \big)' \big\langle \partial_c M_{c^*} u^*, u^* \big\rangle_{L^2(\R)^2} + 2 \big( c^* \big)' \big\langle M_{c^*} u^*, S \partial_c \boH_{c^*}(\eps^*) \big\rangle_{L^2(\R)^2}.
\end{split}
\end{equation}

The proof of \eqref{eq:moyenplateau} is the same as in \cite{BetGrSm2}. We will give only the main ideas of the proof. We will estimate all the terms in the right-hand side of \eqref{dt-boJ} except the fourth term  which vanishes.

For the first one, we infer from Proposition \ref{prop:coer-Gc} the following estimate.
\setcounter{step}{0}
\begin{step}
\label{L1}
There exists a positive number $B_1$, depending only on $\gc$, such that
\begin{equation*}
\boJ_1^*(t) := 2 \big\langle S M_{c^*} u^*, \boH_{c^*}(J S u^*) \big\rangle_{L^2(\R)^2}\\
\geq B_1 \int_\R \big[ (\partial_x u_1^*)^2 + (u_1^*)^2 + (u_2^*)^2 \big](x, t) e^{- 2 |x|} \, dx,
\end{equation*}
for any $t \in \R$.
\end{step}

For the second term, by \eqref{eq:modul0bis}, \eqref{eps:u} and \eqref{smooth:psi*}, we have

\begin{step}
\label{L2}
There exists a positive number $B_2$, depending only on $\gc$, such that
\begin{equation*}
\big| \boJ_2^*(t) \big| := 2 \Big| \big\langle S M_{c^*} u^*, \boH_{c^*}(J \boR_{c^*} \eps^*) \big\rangle_{L^2(\R)^2} \Big| \leq B_2 \big\| \eps^*(\cdot, t) \big\|_{X(\R)}^\frac{1}{2} \big\| u^*(\cdot, t) \big\|_{X(\R)}^2,
\end{equation*}
for any $t \in \R$.
\end{step}
For the third one, we use \eqref{eq:modul0bis} to obtain
\begin{step}
\label{L3}
There exists a positive number $B_3$, depending only on $\gc$, such that
\begin{equation*}
\big| \boJ_3^*(t) \big| := 2 \big| (a^*)' - c^* \big| \, \Big| \big\langle S M_{c^*} u^*, \boH_{c^*}(\partial_x \eps^*) \big\rangle_{L^2(\R)^2} \Big| \leq B_3 \big\| \eps^*(\cdot, t) \big\|_{X(\R)}^\frac{1}{2} \big\| u^*(\cdot, t) \big\|_{X(\R)}^2,
\end{equation*}
for any $t \in \R$.
\end{step}

We now prove the following statement for the fourth term.
\begin{step}
\label{L4}
We have
\begin{equation*}
 \boJ_4^*(t) := 2 (c^*)'  \big\langle S M_{c^*} u^*, \boH_{c^*}(\partial_c Q_{c^*}) \big\rangle_{L^2(\R)^2} = 0,
\end{equation*}
for any $t \in \R$.
\end{step}

Since $\boH_{c^*}(\partial_c Q_{c^*}) = P'(Q_{c^*}) = SQ_{c^*} $ and $M_{c^*}Q_{c^*} = S \partial_x Q_{c^*},$ we have
\begin{align*}
 \big\langle S M_{c^*} u^*, \boH_{c^*}(\partial_c Q_{c^*}) \big\rangle_{L^2(\R)^2} &= \big\langle  M_{c^*} u^*, Q_{c^*} \big\rangle_{L^2(\R)^2} = \big\langle u^*, S \partial_x Q_{c^*} \big\rangle_{L^2(\R)^2} \\
 & = \big\langle \eps^*,  \boH_{c^*}(\partial_x Q_{c^*}) \big\rangle_{L^2(\R)^2} = 0.
\end{align*}
This is the reason why we do not need to establish a quadratic dependence of $(c^*)'(t)$ on $\eps^*$.

Next, we use \eqref{form:vc}, \eqref{eq:modul1}, \eqref{eq:modul0bis} and \eqref{def:Mc} to bound the fifth term in the following way.
\begin{step}
\label{L5}
There exists a positive number $B_5$, depending only on $\gc$, such that
\begin{equation*}
\big| \boJ_5^*(t) \big| := \big| (c^*)' \big| \Big| \big\langle \partial_c M_{c^*} u^*, u^* \big\rangle_{L^2(\R)^2} \Big| \leq B_5 \big\| \eps^*(\cdot, t) \big\|_{X(\R)}^\frac{1}{2} \big\| u^*(\cdot, t) \big\|_{X(\R)}^2,
\end{equation*}
for any $t \in \R$.
\end{step}

Finally, we have in the same way.

\begin{step}
\label{L6}
There exists a positive number $B_6$, depending only on $\gc$, such that
$$\big| \boJ_6^*(t) \big| := \big| (c^*)' \big| \Big| \big\langle M_{c^*} u^*, S \partial_c \boH_{c^*}(\eps^*) \big\rangle_{L^2(\R)^2} \Big| \leq B_6 \big\| \eps^*(\cdot, t) \big\|_{X(\R)}^\frac{1}{2} \big\| u^*(\cdot, t) \big\|_{X(\R)}^2,$$
for any $t \in \R$.
\end{step}

We conclude the proof of Proposition \ref{prop:monou2} by combining the six previous steps to obtain \eqref{eq:moyenplateau}, with $B_* := \max \big\{ 1/B_1, B_2 + B_3 + B_5 + B_6 \}$. \qed

\subsection{Proof of Corollary \ref{cor:bomono}}

Corollary \ref{cor:bomono} is a consequence of Propositions \ref{prop:monou1} and \ref{prop:monou2}. We combine the two estimates \eqref{eq:petitplateau} and \eqref{eq:moyenplateau} with the definition of $N(t)$ to obtain
$$\frac{d}{dt} \Big( \langle N(t) u^*(\cdot, t), u^*(\cdot, t) \rangle_{L^2(\R)^2} \Big) \geq \Big( \frac{1 - \gc^2}{16} - A_* B^2_* e^{2 R_*} \big\| \eps^*(\cdot, t) \big\|_{X(\R)}^\frac{1}{2} \Big) \big\| u^*(\cdot, t) \big\|_{X(\R)}^2,$$
for any $t \in \R$. In view of \eqref{eq:modul0bis}, we fix the parameter $\beta_\gc$ such that
$$\big\| \eps^*(\cdot, t) \big\|_{X(\R)}^\frac{1}{2} \leq \frac{1 - \gc^2}{32 A_* B^2_* e^{2 R_*}},$$
for any $t \in \R$, to obtain \eqref{eq:grandplateau}.
In view of \eqref{form:vc}, \eqref{eq:modul0bis} and \eqref{def:Mc}, we notice that there exists a positive number $A_\gc$, depending only on $\gc$, such that
\begin{equation}
\label{cleveland}
\big\| M_{c^*(t)} \big\|_{L^\infty(\R)} \leq A_\gc,
\end{equation}
for any $t \in \R$. Moreover, since the map $t \mapsto \langle N(t) u^*(\cdot, t), u^*(\cdot, t) \rangle_{L^2(\R)^2}$ is uniformly bounded by \eqref{int-der-u*-exp} and \eqref{cleveland}, estimate \eqref{eq:petitpignon} follows by integrating \eqref{eq:grandplateau} from $t = - \infty$ to $t = + \infty$. Finally, statement \eqref{eq:derailleur} is a direct consequence of \eqref{eq:petitpignon}.

\appendix
\section{Appendix}
\subsection{Weak continuity of the hydrodynamical flow}
In this section, we prove the weak continuity of the hydrodynamical flow which is stated in the following proposition.
\begin{prop}
\label{prop:w-cont-Q}
We consider a sequence $(v_{n, 0}, w_{n, 0})_{n \in \N} \in \boN\boV(\R)^\N$, and a pair $(v_0, w_0) \in \boN\boV(\R)$ such that
\begin{equation}
\label{hyp:w-cont-Q}
v_{n, 0} \rightharpoonup v_0 \quad {\rm in} \ H^1(\R), \quad {\rm and} \quad w_{n, 0} \rightharpoonup w_0 \quad {\rm in} \ L^2(\R),
\end{equation}
as $n \to + \infty$. We denote by $(v_n, w_n)$ the unique solution to \eqref{HLL} with initial datum $(v_{n, 0}, w_{n, 0})$ and we assume that there exists a positive number $T_n$ such that the solutions $(v_n, w_n)$ are defined on $(- T_n, T_n)$, and satisfy the condition
\begin{equation}
\label{sup_n-sup_t:vn}
\sup_{n \in \N} \, \sup_{t \in (- T_n, T_n)} \, \max_{x \in \R} v_n(x, t) \leq 1 - \sigma,
\end{equation}
for a given positive number $\sigma$. Then, the unique solution $(v, w)$ to \eqref{HLL} with initial datum $(v_0, w_0)$ is defined on $(- T_{{\rm max}}, T_{{\rm max}})$, with\footnote{See Theorem 1 in \cite{DeLGr} for more details.} 
$$T_{\rm max} = \liminf_{n \to + \infty} T_n ,$$ and for any $t \in (- T_{\rm max}, T_{\rm max})$, we have
\begin{equation}
\label{eq:w-cont-Q}
v_n(t) \rightharpoonup v(t) \quad {\rm in} \ H^1(\R), \quad {\rm and} \quad w_n(t) \rightharpoonup w(t) \quad {\rm in} \ L^2(\R),
\end{equation}
as $n \to + \infty$. 
\end{prop}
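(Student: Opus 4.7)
The strategy is to extract weakly convergent subsequences of $(v_n, w_n)$, use the Schrödinger reformulation \eqref{eq:Psi}--\eqref{eq:v-Psi} together with the Kato-type smoothing estimate of Proposition \ref{prop:smoothing} to upgrade the gradient convergence to strong local compactness, pass to the limit in \eqref{HLL}, and conclude by invoking the uniqueness of the Cauchy problem from \cite{DeLGr}. The argument is the analogue, in the Landau-Lifshitz setting, of the proof of Proposition 1 in \cite{BetGrSm2}.

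Fix $T \in (0, \liminf_n T_n)$. The assumption \eqref{sup_n-sup_t:vn} together with the conservation of \eqref{eq:hydro-E} and \eqref{def:P} yields a uniform bound on $(v_n, w_n)$ in $L^\infty([-T, T], H^1(\R) \times L^2(\R))$. The system \eqref{HLL} in turn provides a bound for $\partial_t v_n$ in $L^\infty([-T, T], H^{-1}(\R))$ and an analogous weaker one for $\partial_t w_n$. A Banach-Alaoglu and diagonal extraction supply a subsequence (not relabeled) such that $(v_n(t), w_n(t)) \rightharpoonup (\tilde v(t), \tilde w(t))$ in $H^1(\R) \times L^2(\R)$ for each $t \in [-T, T]$, with $\max_x \tilde v(\cdot, t) \leq 1 - \sigma$ by weak lower semicontinuity. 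By Rellich-Kondrachov combined with the equicontinuity in time coming from the bound on $\partial_t v_n$, one also obtains $v_n \to \tilde v$ in $\boC^0([-T, T], L^\infty_{\rm loc}(\R))$, which is enough to pass to the limit in the nonlinear terms that are polynomial in $v_n$ only.

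The key difficulty is to pass to the limit in the quadratic-in-gradient terms $v_n (\partial_x v_n)^2/(1 - v_n^2)^2$ and $v_n w_n^2$ of \eqref{HLL}, for which the mere weak $L^2$-convergence of $(\partial_x v_n, w_n)$ is insufficient. To overcome this, I would transport the problem to the Schrödinger formulation by introducing $\Psi_n$ via \eqref{def:Psi}. Adapting Step \ref{R0} of the proof of Proposition \ref{prop:smooth}, namely the $L^4_t L^\infty_x$-bound furnished by Proposition \ref{prop:cont-Cauchy} together with the Kato smoothing estimate of Proposition \ref{prop:smoothing} applied on $[-T, T]$, one obtains a uniform-in-$n$ control of $\Psi_n$ in $L^2([-T, T], H^1_{\rm loc}(\R))$. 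Combined with the $L^2([-T, T], H^{-1}_{\rm loc}(\R))$-bound on $\partial_t \Psi_n$ read off from the NLS \eqref{eq:Psi}, the Aubin-Lions lemma then gives $\Psi_n \to \tilde \Psi$ strongly in $L^2([-T, T], L^2_{\rm loc}(\R))$. In view of the identity $\partial_x v_n = 2 \Re(\Psi_n(1 - 2 F(v_n, \overline{\Psi}_n)))$ in \eqref{eq:v-Psi} and the definition \eqref{def:Psi} of $w_n$ in terms of $\Psi_n$ and $v_n$, this yields strong $L^2_{\rm loc}(L^2_{\rm loc})$-compactness of both $\partial_x v_n$ and $w_n$.

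Equipped with these strong local compactness properties, every nonlinear term in \eqref{HLL} passes to the distributional limit, so that $(\tilde v, \tilde w)$ is a solution of \eqref{HLL} on $[-T, T]$ with initial datum $(v_0, w_0)$. By the uniqueness statement in Theorem 1 of \cite{DeLGr}, $(\tilde v, \tilde w)$ coincides with $(v, w)$, which is therefore well-defined on $[-T, T]$. Letting $T \nearrow \liminf_n T_n$ gives $T_{\rm max} \geq \liminf_n T_n$, and the uniqueness of the subsequential weak limit upgrades the extraction to full-sequence convergence, proving \eqref{eq:w-cont-Q}. The main obstacle throughout is precisely the strong local compactness of $\partial_x v_n$: without the derivative gain delivered by the Kato smoothing on the Schrödinger side, the quadratic-in-gradient nonlinearities of \eqref{HLL} would not survive the weak passage to the limit, so the whole argument genuinely relies on replacing \eqref{HLL} by its $\Psi$-reformulation.
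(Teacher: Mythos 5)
Your overall architecture coincides with the paper's: transfer the problem to the Schr\"odinger formulation \eqref{eq:Psi}--\eqref{eq:v-Psi}, use a smoothing effect to upgrade the weak convergence of $\Psi_n$ to strong compactness in $L^2_tL^2_{\rm loc}$, pass to the limit in the semilinear system, recover $w$ from $\Psi$ and $v$ via \eqref{eq:w^*}, and conclude by uniqueness. However, there are two concrete gaps. First, the smoothing estimate you invoke is the wrong one: Proposition \ref{prop:smoothing} is a weighted estimate whose right-hand side involves $\int |u|^2 e^{\lambda x}$, so it is only usable for exponentially localized solutions (it is applied in the paper to the limit profile $\Psi^*$, which decays exponentially by Proposition \ref{prop:local}). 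The functions $\Psi_n$ here are merely bounded in $L^2$, and for them the relevant tool is the Kato local smoothing estimate for the free group $e^{it\partial_{xx}}$, i.e. $\sup_x \int |D_x^{1/2} S(t)f|^2\,dt \lesssim \|f\|_{L^2}^2$ together with its inhomogeneous counterpart. This yields only a half-derivative gain, $\Psi_n$ bounded in $L^2([0,T], H^{1/2}_{\rm loc}(\R))$ (this is Claim \ref{claim:kato} in the appendix), not the $H^1_{\rm loc}$ control you claim; the half derivative, interpolated against the $H^1([0,T],H^{-2}(\R))$ bound coming from the equation, is still enough for the Aubin--Lions compactness you need, but the statement and the tool have to be corrected.

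Second, your final appeal to uniqueness is not yet justified. The uniqueness statement of Proposition \ref{prop:cont-Cauchy} (and of Theorem 1 in \cite{DeLGr}) applies to solutions in $\boC^0([0,T], H^1(\R)\times L^2(\R))$ with $\Psi \in L^4([0,T],L^\infty(\R))$, whereas the compactness argument only delivers a distributional solution $(\Phi,\gv)$ lying in $L^\infty([0,T], L^2(\R))\times L^\infty([0,T],H^1(\R))$. One must still prove that the limit is continuous in time with values in the right spaces before identifying it with $(v,w)$; in the paper this is the content of Step \ref{T4} of the proof of Proposition \ref{prop:w-cont-Psi}, which establishes the Duhamel representation \eqref{duhamel:phi} for the limit $\Phi$ (itself requiring a separate passage to the limit in the Duhamel formula for $\Psi_n$, splitting the integral with cut-offs and using Strichartz estimates) and then deduces $\Phi\in\boC^0([0,T],L^2(\R))$ and $\gv\in\boC^0([0,T],H^1(\R))$. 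This is the longest step of the paper's proof and cannot be skipped: without it the uniqueness theorem simply does not apply to the object you have constructed.
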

First we prove a weak continuity property of the flow of equations \eqref{eq:Psi}--\eqref{eq:v-Psi}. Next, we deduce the weak convergence of $w_n$ from \eqref{eq:w^*}.
 
More precisely, we consider now a sequence of initial conditions $(\Psi_{n, 0},v_{n, 0}) \in L^2(\R) \times H^1(\R),$ such that the norms $\|\Psi_{n, 0}\|_{L^2}$ and $\|v_{n, 0}\|_{L^2}$ are uniformly bounded with respect to $n$ and we assume that
\begin{equation}
\label{sup_n:v_n,0}
\sup_{n \in \N} \|v_{n, 0}\|_{L^\infty(\R)} <1.
\end{equation}
Then, there exist two functions $\Psi_0 \in L^2(\R)$ and $v_0 \in H^1(\R)$ such that, going possibly to a subsequence,
\begin{equation}
\label{hyp:Psi-cont1}
\Psi_{n, 0} \rightharpoonup \Psi_0 \quad {\rm in} \ L^2(\R),
\end{equation}
\begin{equation}
\label{hyp:v-cont1}
v_{n, 0} \rightharpoonup  v_0 \quad {\rm in} \ H^1(\R), 
\end{equation}
and, for any compact subset $K$ of $\R$,
\begin{equation}
\label{hyp:v-cont2}
v_{n, 0} \to v_0 \quad {\rm in} \ L^\infty(K), 
\end{equation}
as $n \to + \infty$. We claim that this convergence is conserved along the flow corresponding to equations \eqref{eq:Psi}-\eqref{eq:v-Psi}\footnote{We only consider here positive time but the proof remains available for negative time.}.
\begin{prop}
\label{prop:w-cont-Psi}
We consider two sequences $(\Psi_{n, 0})_{n \in \N} \in L^2(\R)^\N$ and $(v_{n, 0})_{n \in \N} \in H^1(\R)^\N$, and two functions $\Psi_0 \in L^2(\R)$ and $v_0 \in H^1(\R)$, such that assumptions \eqref{sup_n:v_n,0}--\eqref{hyp:v-cont2}  are satisfied, and we denote by $(\Psi_n,v_n)$, respectively $(\Psi,v)$, the unique global solutions to \eqref{eq:Psi}-\eqref{eq:v-Psi} with initial datum $(\Psi_{n, 0},v_{n, 0})$, respectively $(\Psi_0,v_0)$, which we assume to be defined on $[0,T]$ for a positive number $T$. For any fixed $t \in [0,T]$, we have
\begin{equation}
\label{eq:Psi-conv}
\Psi_n(\cdot, t) \rightharpoonup \Psi(\cdot, t) \quad {\rm in} \ L^2(\R),
\end{equation}
and
\begin{equation}
\label{eq:v-conv}
v_n(\cdot, t) \rightharpoonup v(\cdot, t) \quad {\rm in} \ H^1(\R),
\end{equation}
when $n \to + \infty$.
\end{prop}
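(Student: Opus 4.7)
The plan is to extract a subsequential weak limit $(\tilde\Psi, \tilde v)$ of $(\Psi_n, v_n)$, to show that it is a solution of \eqref{eq:Psi}-\eqref{eq:v-Psi} with initial datum $(\Psi_0, v_0)$, and to invoke the uniqueness assertion of Proposition \ref{prop:cont-Cauchy} to identify $(\tilde\Psi, \tilde v)$ with $(\Psi, v)$. Since any subsequence of $(\Psi_n, v_n)$ will then admit a further subsequence converging weakly to the same limit $(\Psi, v)$, the weak convergences \eqref{eq:Psi-conv} and \eqref{eq:v-conv} will follow for the whole sequence.

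First I will establish the uniform bounds. The local Cauchy theory for \eqref{eq:Psi}-\eqref{eq:v-Psi}, combined with the assumption that each $(\Psi_n, v_n)$ is defined on $[0, T]$ and with \eqref{sup_n:v_n,0}, provides
\begin{equation*}
\sup_{n \in \N} \Big( \| \Psi_n \|_{L^\infty([0, T], L^2)} + \| v_n \|_{L^\infty([0, T], H^1)} + \| \Psi_n \|_{L^4([0, T], L^\infty)} \Big) \leq M,
\end{equation*}
together with $\sup_n \| v_n \|_{L^\infty([0, T] \times \R)} \leq 1 - \tilde\sigma$ for some $\tilde\sigma > 0$. Banach-Alaoglu then yields subsequential weak-$\ast$ limits $\tilde\Psi$ and $\tilde v$ in these spaces. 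From the two equations, $\partial_t v_n$ and $\partial_t \Psi_n$ are uniformly bounded in $L^\infty([0, T], H^{-1})$ and $L^2([0, T], H^{-2})$, respectively. An Aubin-Lions argument, based on the compact embedding $H^1 \hookrightarrow L^\infty_{\rm loc}$, gives the strong convergence $v_n \to \tilde v$ in $C([0, T], L^\infty_{\rm loc}(\R))$, and in particular $v_n(t) \rightharpoonup \tilde v(t)$ in $H^1$ for every fixed $t \in [0, T]$. For $\Psi_n$, I combine the Kato smoothing effect of Proposition \ref{prop:smoothing} applied to \eqref{eq:Psi}, which furnishes local half-derivative bounds in space-time, with a second Aubin-Lions argument to obtain $\Psi_n \to \tilde\Psi$ strongly in $L^2([0, T], L^2_{\rm loc}(\R))$ and weak $L^2$-convergence at every fixed $t$.

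With these compactness properties I will pass to the limit in \eqref{eq:Psi}-\eqref{eq:v-Psi}. The cubic term $|\Psi_n|^2 \Psi_n$ converges to $|\tilde\Psi|^2\tilde\Psi$ in the sense of distributions, thanks to the strong local $L^3$-convergence of $\Psi_n$ obtained by interpolation between the strong local $L^2$-convergence and the uniform $L^4 L^\infty$-bound. For the nonlocal term $F(v_n, \Psi_n)$, splitting
\begin{equation*}
v_n \Psi_n - \tilde v \tilde\Psi = (v_n - \tilde v) \Psi_n + \tilde v (\Psi_n - \tilde\Psi),
\end{equation*}
the first summand tends to $0$ in $L^1_{\rm loc}$ by strong convergence of $v_n$ and $L^2$-boundedness of $\Psi_n$, while the second gives pointwise convergence of the primitive by testing against $\tilde v$ restricted to $(-\infty, x]$. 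In addition, \eqref{eq:theta} provides the uniform bound $|1 - 2 F(v_n, \Psi_n)|^2 = 1 - v_n^2 \leq 1$ which allows me to pass to the limit in each remaining product appearing in \eqref{eq:Psi} and \eqref{eq:v-Psi}. Hence $(\tilde\Psi, \tilde v)$ is a solution of the system on $[0, T]$ with initial datum $(\Psi_0, v_0)$, and the uniqueness part of Proposition \ref{prop:cont-Cauchy} forces $(\tilde\Psi, \tilde v) = (\Psi, v)$, from which the subsequence argument concludes.

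The hardest point in this plan will be the compactness of $\Psi_n$. Since we only control $\Psi_n$ at the $L^2$-level in space, a direct Sobolev embedding argument as in the Gross-Pitaevskii setting of \cite{BetGrSm2} is unavailable, and the Kato smoothing effect together with the Strichartz-type $L^4 L^\infty$-bound inherited from the local Cauchy theory becomes indispensable. A related subtlety is the passage to the limit in the nonlocal coupling $F(v, \Psi)$, which relies crucially on the identity \eqref{eq:theta} to express $F$ through the bounded modulus and phase of $1 - 2 F$.
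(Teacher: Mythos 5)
Your overall architecture coincides with the paper's: uniform bounds from the local Cauchy theory, compactness of $v_n$ by an Arzel\`a--Ascoli/Aubin--Lions argument, compactness of $\Psi_n$ in $L^2_{t}L^2_{x,{\rm loc}}$ via a half-derivative smoothing gain combined with the time-derivative bound, passage to the limit in the cubic and nonlocal terms, and identification of the limit through the uniqueness statement of Proposition \ref{prop:cont-Cauchy}. Two points, however, deserve attention, the first of which is a genuine gap.

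The uniqueness assertion of Proposition \ref{prop:cont-Cauchy} applies only to solutions lying in $\boC^0([0,T], H^1(\R)\times L^2(\R))$ with $\Psi \in L^4([0,T], L^\infty(\R))$. The limit $(\tilde\Psi,\tilde v)$ you produce by Banach--Alaoglu is a priori only in $L^\infty([0,T],L^2)\times L^\infty([0,T],H^1)$ (weak-$*$ limits give no strong continuity in time), so you cannot invoke the uniqueness result before upgrading the time regularity of the limit. This is not a formality: the paper devotes an entire step (Step 4 of its proof) to establishing that $\Phi\in\boC^0([0,T],L^2(\R))$ and $\gv\in\boC^0([0,T],H^1(\R))$, and does so by proving that the weak limit satisfies the Duhamel formula \eqref{duhamel:phi} --- which itself requires passing to the limit inside the Duhamel integral, splitting into a compact region (handled by the strong local convergences) and a far region (handled by Strichartz estimates and a dominated-convergence argument in $R$) --- and then reading off strong $L^2$-continuity in time from the integral equation; the $H^1$-continuity of $\gv$ then follows from the system \eqref{eq:v-Psi}. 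Your proposal stops at ``$(\tilde\Psi,\tilde v)$ is a distributional solution with the right initial datum, hence equals $(\Psi,v)$ by uniqueness,'' and this last inference is exactly where the missing work lies.

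A second, more minor, point: the smoothing estimate you cite, Proposition \ref{prop:smoothing}, carries exponential weights $e^{\lambda x}$ on both sides and is therefore unusable here, since the $\Psi_n$ are only bounded in $L^2(\R)$ and the weighted right-hand side would be infinite. The correct tool --- the one the paper uses in Claim \ref{claim:kato} --- is the unweighted local Kato smoothing effect for the Schr\"odinger group (estimates \eqref{estim:D1/2L2}--\eqref{estmSD12h} from \cite{LinaPon0}), which yields $\Psi_n$ uniformly bounded in $L^2([0,T],H^{1/2}_{\rm loc}(\R))$; interpolation with the $H^1([0,T],H^{-2}(\R))$ bound then gives the compactness in $L^2([0,T],L^2_{\rm loc}(\R))$ that you need. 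The underlying idea in your plan is the right one, but the reference should be corrected.
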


\begin{proof}
We split the proof into four steps.
\setcounter{step}{0}
\begin{step}
\label{T1}
There exist three functions $\Phi \in L^2( [0,T], L^2(\R))$ and $ \gv \in L^2 ([0,T], H^1(\R))$ such that, up to a further subsequence,
\begin{equation}
\label{conv-Psin}
\Psi_n(t) \rightharpoonup \Phi(t) \quad {\rm in} \quad L^2(\R),
\end{equation} 

\begin{equation}
\label{conv-vn}
v_n(\cdot, t) \rightharpoonup \gv(\cdot, t) \quad {\rm in} \quad H^1(\R),
\end{equation}
\begin{equation}
\label{convfortloc-vn}
v_n(\cdot, t) \longrightarrow \gv(\cdot, t) \quad {\rm in} \quad L^\infty_{loc}(\R),
\end{equation}
for all $t \in [0,T]$, and

\begin{equation}
\label{conv-Psin3}
|\Psi_n|^2 \Psi_n \rightharpoonup |\Phi|^2 \Phi \quad {\rm in} \quad L^2( [0,T], L^2(\R)),
\end{equation}
when $n \to + \infty$.
\end{step}
\begin{proof}
We recall that there exists a constant $M$ such that
$$ \| \Psi_{n,0}\|_{L^2} \leq M \quad {\rm and} \quad \| v_{n,0}\|_{H^1} \leq M ,$$
uniformly on $n$. Applying Proposition \ref{prop:cont-Cauchy} to the pairs $(\Psi_n,v_n)$ and $(0,0)$, we obtain 
$$\| \Psi_n\|_{\boC^0_T L^2_x} + \| v_n\|_{\boC^0_T H^1_x} + \| \Psi_n\|_{L^4_T L^\infty_x} \leq A \Big( \| \Psi_{n,0}\|_{L^2} + \| v_{n,0}\|_{H^1} \Big).$$
This leads to
\begin{equation}
\label{bornePsinvn}
\| \Psi_n\|_{L^4_T L^\infty_x} \leq 2 A M, \quad \| \Psi_n\|_{L^\infty_T L^2_x} \leq 2 A M, \quad {\rm and} \quad \| v_n\|_{L^\infty_T H^1_x} \leq 2 A M.
\end{equation}
Hence, there exist two functions $\Phi \in L^\infty([0,T],L^2(\R))\bigcap L^4([0,T],L^\infty(\R))$ and \\
$ \gv \in L^\infty ([0,T], H^1(\R))$ such that 
$$\Psi_n \overset{*}{\rightharpoonup} \Phi \quad {\rm in} \ L^\infty([0, T],L^2(\R))$$
and
$$v_n \overset{*}{\rightharpoonup} \gv \quad {\rm in} \ L^\infty([0, T],H^1(\R))$$
Let us prove \eqref{conv-Psin} and \eqref{conv-vn}. We argue as in \cite{BetGrSm2} and we introduce a cut-off function $\chi \in \boC_c^\infty(\R)$ in the way that $\chi \equiv 1$ on $[- 1, 1]$ and $\chi \equiv 0$ on $(- \infty, 2] \cup [2, + \infty)$. Denote $\chi_p(\cdot) := \chi(\cdot/p)$ for any integer $p \in \N^*$. By \eqref{bornePsinvn}, the sequences $(\chi_p \Psi_n)_{ n \in \N}$ and $(\chi_p v_n)_{ n \in \N}$ are bounded in $\boC^0([0, T], L^2(\R))$ and $\boC^0([0, T], H^1(\R))$ respectively. In view of the Rellich-Kondrachov theorem, the sets $\{ \chi_p \Psi_n(\cdot, t), n \in \N \}$ and $\{ \chi_p v_n(\cdot, t), n \in \N \}$ are relatively compact in $H^{-2}(\R)$ and $H^{-1}(\R)$ respectively, for any fixed $t \in [0, T]$. In addition, since the couple $(\Psi_n,v_n)$ is solution to \eqref{eq:Psi}-\eqref{eq:v-Psi}, we have $(\partial_t \Psi_n,\partial_t v_n)$ belongs to $\boC^0([0, T], H^{- 2}(\R) \times H^{- 1}(\R))$ and satisfies
$$\| \partial_t \Psi_n(\cdot, t) \|_{H^{- 2}(\R)}  \leq K_M \quad {\rm and} \quad \| \partial_t v_n(\cdot, t) \|_{H^{- 1}(\R)}  \leq K_M .$$
This leads to the fact that the couple $(\chi_p\Psi_n,\chi_p v_n)$ is equicontinuous in $\boC^0([0, T], H^{- 2}(\R) \times H^{- 1}(\R))$. Then, we apply the Arzela-Ascoli theorem and the Cantor diagonal argument, to find a further subsequence (independent of $p$), such that, for each $p \in \N^*$,
\begin{equation}
\label{convfortloc:C0psi_n}
\chi_p \Psi_n \to \chi_p \Phi \quad {\rm in} \ \boC^0([0, T], H^{- 2}(\R)),
\end{equation}
and
\begin{equation}
\label{convfortloc:C0v_n}
\chi_p v_n \to \chi_p \gv \quad {\rm in} \ \boC^0([0, T], H^{- 1}(\R)),
\end{equation}
as $n \to + \infty$. Combining this with \eqref{bornePsinvn} we infer that \eqref{conv-Psin} and \eqref{conv-vn} hold. By the Sobolev embedding theorem, \eqref{convfortloc-vn} is a consequence of \eqref{conv-vn}. 

Now, let us prove \eqref{conv-Psin3}. Using the Hölder inequality, we infer that
$$ \int_0^T \int_\R |\Psi_n(x,t)|^6 dx dt \leq \|\Psi_n\|_{L^\infty L^2_x}^2  \|\Psi_n\|_{L^4_T L^\infty_x}^4.$$
By \eqref{bornePsinvn}, we conclude that
\begin{equation}
\label{borne:Psin2Psi}
\||\Psi_n|^2 \Psi_n \|_{L^2_T L^2_x} \leq M.
\end{equation}
Then, there exists a function $\Phi_1 \in L^2( \R \times [0,T])$ such that up to a further subsequence,
$$ |\Psi_n|^2\Psi_n \rightharpoonup \Phi_1 \quad {\rm in} \quad L^2(\R \times [0,T]).$$
Let us prove that $\Phi_1 \equiv |\Phi|^2\Phi$. To obtain this it is sufficient to prove that, up to a subsequence, 
\begin{equation}
\label{convfortloc-Psin}
\Psi_n \longrightarrow \Phi \quad {\rm in} \ L^2( [0,T], L^2([-R,R])),
\end{equation}
for any $R>0$, i.e the sequence $(\Psi_n)$ is relatively compact in $L^2([-R,R] \times [0,T]).$ Indeed, using the Holder inequality, we obtain
\begin{equation}
\label{estim:L6/5}
\begin{array}{ll}
\||\Psi_n|^2 \Psi_n - |\Phi|^2 \Phi\|_{L^{\frac{6}{5}}_{T,R}} & =  \| (\Psi_n - \Phi ) ( |\Psi_n|^2 + |\Phi|^2 )+ \Psi_n\Phi(\overline{\Psi}_n - \overline{\Phi}) \|_{L^{\frac{6}{5}}_{T,R}} \\
                   & \leq 2 \| |\Psi_n - \Phi | ( |\Psi_n|^2 + |\Phi|^2 ) \|_{L^{\frac{6}{5}}_{T,R}} \\
                   &  \leq 2 \| \Psi_n - \Phi \|_{L^{2}_{T,R}} \big( \| \Psi_n \|^2_{L^{6}_{T,R}} + \| \Phi \|^2_{L^{6}_{T,R}} \big),
\end{array}
\end{equation}
for any $R>0$. By \eqref{borne:Psin2Psi}, $(\Psi_n)$ is uniformly bounded in $ L^{6}(\R \times [0,T])$ and $\Phi \in L^{6}(\R \times [0,T])$. Then
$$ |\Psi_n|^2\Psi_n \longrightarrow |\Phi|^2 \Phi \quad {\rm in} \quad L^{\frac{6}{5}}([-R,R] \times [0,T]).$$ So that $\Phi_1 \equiv |\Phi|^2\Phi$. Now, let us prove that the sequence $(\Psi_n)$ is relatively compact in $L^2([-R,R] \times [0,T]).$ The main point of the proof is the following claim.
\begin{claim}
\label{claim:kato}
Let $\Psi$ be a solution of \eqref{eq:Psi} in $\boC ^0 ([0,T], L^2(\R))\bigcap L^4([0,T],L^\infty(\R)).$ Then, \\ $\Psi \in L^2 ([0,T],H^{\frac{1}{2}}_{loc}(\R)).$
\end{claim}
\begin{proof}
The proof relies on the Kato smoothing effect for the linear Schrödinger group (see \cite{LinaPon0}). Denote $S(t)=e^{it\partial_{xx} },$ and
\begin{equation}
\label{def:Fpsiv}
\boF(\Psi,v) := \frac{1}{2} v^2 \Psi - \Re \Big( \Psi \big( 1 - 2 F(v, \overline{\Psi}) \big) \Big) \big( 1 - 2 F(v, \Psi) \big).
\end{equation}
We recall that there exists a positive constant $M$ such that
\begin{equation}
\label{estim:D1/2L2}
\sup_{x \in \R} \int^{+\infty}_{-\infty} |D^{\frac{1}{2}}_x S(t) f(x) |^2 dt \leq M \|f\|^2_{L^2},
\end{equation}
and
\begin{equation}
\label{estmSD12h}
\Big\| \int_\R S(-t') D^{\frac{1}{2}}_x h(\cdot ,t')dt' \Big\| _{L^2} \leq M \| h\|_{L^1_x L^2_t},
\end{equation}
when $f \in L^2(\R)$ and $h \in L^1(\R , L^2(\R))$ (see \cite{LinaPon0} for more details). We prove that there exists a positive constant $M$ such that
\begin{equation}
\label{estimD12Linf}
\|D^{\frac{1}{2}}_x \Psi\|_{L^\infty_x L^2_T} \leq M \|\Psi_0\|_{L^2} + M  \|\Psi\|_{L^2_{T,x}} \Big( \|\Psi\|^2_{L^6_{T,x}} + T^\frac{1}{2}\big( \|v\|^2_{L^\infty_{T,x}} + \|1 - 2 F(v, \Psi)\|^2_{L^\infty_{T,x}}\big) \Big).
\end{equation}
The claim is a consequence of this estimate, so that it is sufficient to prove \eqref{estimD12Linf}. 

We write
$$ \Psi(x,t) = S(t) \Psi_0(x) + i \int^t_0 S(t-t') \Big( 2 (|\Psi|^2 \Psi)(x,t') + \boF(\Psi,v)(x,t') \Big) dt',$$
for all $(x,t) \in \R$. First, using \eqref{estim:D1/2L2}, we obtain
$$\sup_{x \in \R} \int^{+\infty}_{-\infty} |D^{\frac{1}{2}}_x S(t) \Psi_0(x) |^2 dt \leq M \|\Psi_0\|^2_{L^2}.$$
For the nonlinear term, we can argue as in \cite{Goubet96regularityof} to prove that

\begin{equation}
\label{estmD12g}
\Big\| \int_0^t S(t-t') D^{\frac{1}{2}}_x g(\cdot ,t')dt' \Big\| _{L^\infty_x L^2_T} \leq M \| g\|_{L^1_{T}L^2_x}.
\end{equation}
Using a duality argument, it is equivalent to prove that for any smooth function $h$ that satisfies $\| h\|_{L^1_x L^2_t} \leq 1,$ we have
\begin{equation}
\label{estmD12gdual}
\Big| \int_{\R \times[0,T]^2} S(t-t') D^{\frac{1}{2}}_x g(x ,t') \overline{h}(x,t) dt' dx dt \Big| \leq M \| g\|_{L^1_{T}L^2_x}.
\end{equation}
The left-hand side can be written, using the Cauchy-Schwarz and Stricharz estimates, and \eqref{estmSD12h}, as
\begin{align*}
&\Big| \int_\R \Big( \int_0^T S(-t') D^{\frac{1}{2}}_x g(x ,t') dt' \Big) \Big( \int_0^T \overline{ S(-t) h(x,t)}dt \Big) dx \Big| \\
& = \Big| \int_\R \Big( \int_0^T S(-t')  g(x ,t') dt' \Big) \Big( \int_0^T \overline{ S(-t) D^{\frac{1}{2}}_x h(x,t)}dt \Big) dx \Big| \\
& \leq M \Big\| \int_0^T S(-t') g(x ,t') dt' \Big\|_{L^2} \leq M \| g\|_{L^1_TL^2_x}.
\end{align*} 
This achieves the proof of \eqref{estmD12g}. 
Similarly, we have
\begin{equation}
\label{estmD12gL5/6}
\Big\| \int_0^t S(t-t') D^{\frac{1}{2}}_x g(\cdot ,t')dt' \Big\| _{L^\infty_x L^2_T} \leq M \| g\|_{L^{\frac{5}{6}}_{T,x}}.
\end{equation}
We next apply \eqref{estmD12g} and \eqref{estmD12gL5/6} on the nonlinear terms to obtain, using the Cauchy-Schwarz and Hölder estimates,
$$\Big\| \int^t_0 D^{\frac{1}{2}}_x S(t-t') (|\Psi|^2 \Psi)(\cdot,t') dt'\Big\|_{L^\infty_x L^2_T} \leq M \| \Psi^3\|_{L^{\frac{6}{5}}_{T,x}} \leq M \| \Psi \|_{L^2_{T,x}} \| \Psi \|^2_{L^6_{T,x}},$$
and
\begin{align*}
&\Big\| \int^t_0 D^{\frac{1}{2}}_x S(t-t') \boF(\Psi,v)(\cdot,t') dt'\Big\|_{L^\infty_x L^2_T} \leq M \| \boF(\Psi,v)\|_{L^1_TL^2_x} \\
& \ \ \leq M  \| \Psi \|_{L^1_T L^2_x} \big( \| v \|^2_{L^\infty_{T,x}} + \| 1 - 2 F(v, \Psi) \|^2_{L^\infty _{T,x}} \big) \\
& \ \ \leq M T^\frac{1}{2} \| \Psi \|_{L^2_{T,x}} \big( \| v \|^2_{L^\infty _{T,x}} + \| 1 - 2 F(v, \Psi) \|^2_{L^\infty_{T,x}} \big).
\end{align*}
Since $ v \in L^\infty ( [0,T], H^1(\R))$ and $\Psi \in L^\infty([0,T],L^2(\R))$, we know that $\Psi \in L^\infty([0,T],L^2(\R))$ and $ F(\Psi,v) \in L^\infty(\R \times [0,T])$. 
Using the fact that $\Psi \in L^{6}(\R \times [0,T]),$ we finish the proof of this claim.
\end{proof}
Applying this claim to the sequence $(\Psi_n)$ yields that $(\Psi_n)$ is uniformly bounded in the space $L^2 ([0,T],H^{\frac{1}{2}}_{loc}(\R)).$ On the other hand, we have $\boF(\Psi_n,v_n) \in  L^\infty([0,T],L^2(\R)),$ since\\ $ v_n \in L^\infty ( [0,T], H^1(\R))$, $\Psi_n \in L^\infty([0,T],L^2(\R))$ and $ F(\Psi_n,v_n) \in L^\infty(\R \times [0,T])$. Then, using \eqref{eq:Psi} and \eqref{borne:Psin2Psi}, we obtain that $(\Psi_n)$ is uniformly bounded in $H^1 ([0,T],H^{-2}(\R)).$ Hence, by interpolation $(\Psi_n)\in H^{\frac{1}{10}} ([0,T],H_{\rm loc}^{\frac{1}{4}}(\R))$ so that it converges in $L^2([-R,R] \times [0,T])$ for any $R>0$. This finishes the proofs of \eqref{convfortloc-Psin} and of Step \ref{T1}.
\end{proof}

\begin{step}
\label{T2}
We have
\begin{equation}
\label{convfaible:bofn}
\boF(\Psi_n,v_n) \rightharpoonup \boF(\Phi,\gv) \quad {\rm in} \quad L^2(\R),
\end{equation}
for any $t \in [0,T]$, and
\begin{equation}
\label{conv:bofn}
\boF(\Psi_n,v_n) \longrightarrow \boF(\Phi,\gv) \quad {\rm in} \quad L^1([0,T],L_{loc}^2(\R)).
\end{equation}
\end{step}
\begin{proof}
Let $\phi \in L^2(\R)$. We compute
\begin{equation}
\label{int:v2npsin-gvphi}
\begin{array}{ll}
&\int_{\R} \big( v_n^2(x,t) \Psi_n(x,t) - \gv ^2(x,t) \Phi(x,t) \big) \phi(x) dx\\
&  = \int_{\R} \big( v_n^2(x,t) - \gv ^2(x,t) \big) \Psi_n(x,t) \phi(x) dx + \int_{\R} \big( \Psi_n(x,t) - \Phi(x,t) \big) \gv ^2(x,t) \phi(x) dx.
\end{array}
\end{equation}
The second term in the right-hand side goes to $0$ when $n$ goes to $+\infty$, since $\gv^2(t) \phi \in L^2(\R)$ for all $t$ on one hand and using \eqref{conv-Psin} on the other hand. For the first term in the right-hand side, we consider a cut-off function $\chi$ with support into $[-1,1]$ and denote $\chi_R(x) = \chi(\frac{x}{R})$ for all $(x,R) \in \R \times (0,+\infty) $. We set
$$I_n(t) := \int_{\R} \big( v_n^2(x,t) - \gv ^2(x,t) \big) \Psi_n(x,t) \phi(x) dx,$$
$$I_n^{(1)}(t):= \int_{\R} \big( v_n^2(x,t) - \gv ^2(x,t) \big) \Psi_n(x,t) \chi_R(x) \phi(x)  dx,$$
and
$$I_n ^{(2)}(t):= \int_{\R} \big( v_n^2(x,t) - \gv ^2(x,t) \big) \Psi_n(x,t) \big( 1 - \chi_R(x) \big) \phi(x)  dx,$$
so that $ I_n(t) = I_n^{(1)}(t) + I_n^{(2)}(t).$ By the Cauchy-Schwarz inequality, we have
\begin{equation}
\label{estim:In1}
| I^{(1)}_n(t) | \leq \| \Psi_n(t)\|_{L^2(\R)}  \| \phi\|_{L^2(\R)} \| v^2_n(t) - \gv^2(t) \|_{L^\infty([-R,R])}.
\end{equation}
Using \eqref{convfortloc-vn} and \eqref{bornePsinvn}, we infer that 
\begin{equation}
\label{conv:In1LinftyL2}
I^{(1)}_n(t) \rightarrow 0 \quad {\rm for \ any } \quad t \in [0,T],
\end{equation}
as $n$ goes to $+\infty.$
Next, we write
$$ |I^{(2)}_n(t)| \leq \big( \|v_n(t)\|^2_{L^\infty(\R)} + \|\gv(t)\|^2_{L^\infty(\R)} \big) \|\Psi_n(t)\|_{L^2(\R)} \| ( 1 - \chi_R ) \phi\|_{L^2(\R)}.$$
Since $\phi \in L^2(\R)$, we have
$$\lim_{R \rightarrow \infty}  \| ( 1 - \chi_R ) \phi\|_{L^2(\R)} =0.$$
In view of \eqref{bornePsinvn}, this is sufficient to prove that
\begin{equation}
\label{conv-In}
I_n(t) \rightarrow 0 
\end{equation}
as n goes to $+\infty$, for all $t \in [0,T].$ This yields
\begin{equation}
\label{conv-vn2psin}
(v^2_n \Psi_n)(t) \rightharpoonup (\gv ^2 \Phi)(t) \quad {\rm in} \quad L^2(\R),
\end{equation}
for any $t\in[0,T]$. Now, we prove
\begin{equation}
\label{convfortloc-vn2psin}
v^2_n \Psi_n \longrightarrow \gv ^2 \Phi \quad {\rm in} \quad L^1([0,T],L^2_{loc}(\R)).
\end{equation}
We write as in \eqref{int:v2npsin-gvphi},
$$\| v_n^2 \Psi_n - \gv ^2 \Phi  \|_{L^1_TL^2_R} \leq \| ( v_n^2 - \gv ^2 ) \Psi_n\|_{L^1_TL^2_R} + \| ( \Psi_n - \Phi \big) \gv ^2\|_{L^1_TL^2_R}.$$
For the first term in the right-hand side, we infer from the Cauchy-Schwarz inequality, that 
\begin{equation*}
\begin{split}
\| ( v_n^2 - \gv ^2 ) \Psi_n\|_{L^1_TL^2_R}& \leq \| v_n^2 - \gv ^2 \|_{L^2_TL^2_R} \| \Psi_n\|_{L^2_TL^\infty_R} \\ 
             & \leq \| v_n - \gv \|_{L^4_TL^4_R} \big( \| v_n \|_{L^4_TL^4_R} + \| \gv \|_{L^4_TL^4_R} \big) T^\frac{1}{2} \| \Psi_n\|_{L^4_TL^\infty_R}.
\end{split}
\end{equation*}
On the other hand, by \eqref{bornePsinvn}, $v_n$ is uniformly bounded on $L^2([0,T],H^1(\R))$. By the first equation of \eqref{eq:v-Psi} and \eqref{bornePsinvn}, $v_n$ is uniformly bounded in $H^1([0,T],H^{-1}(\R))$. We deduce that $v_n$ is uniformly bounded in $H^{\frac{1}{3}}([0,T],H^{\frac{1}{3}}(\R))$ and so that $v_n $ converges to $\gv$ in $L^4([0,T],L^{4}([-R,R]))$ when $n$ goes to $+\infty$. Hence, using \eqref{bornePsinvn} once again,  we obtain
$$ \| ( v_n^2 - \gv ^2 ) \Psi_n\|_{L^1_TL^2_R} \rightarrow 0,$$
as $n$ goes to $+\infty$. For the second term we have by the Cauchy-Schwarz inequality and the Sobolev embedding theorem,
$$ \| ( \Psi_n - \Phi ) \gv ^2 \|_{L^1_TL^2_R} \leq \| \Psi_n - \Phi \|_{L^2_TL^2_R} \| \gv ^2 \|_{L^2_TL^\infty _R} \leq M^2 T^{1/2} \| \Psi_n - \Phi \|_{L^2_TL^2_R}.$$
This yields using \eqref{convfortloc-Psin}, $$\| ( \Psi_n - \Phi ) \gv ^2 \|_{L^1_TL^2_R} \longrightarrow 0,$$
as $n$ goes to $+\infty$, which proves \eqref{convfortloc-vn2psin}. Next, we set
$$\boG(v_n,\Psi_n) = \Psi_n  \big( 1- F(v_n,\overline{\Psi}_n)\big) \big( 1- F(v_n,\Psi_n)\big).$$
We have by \eqref{def:F1},
$$ \partial_x  F(v_n,\Psi_n) = v_n \Psi_n \quad {\rm and} \quad \partial_x  F(\gv,\Phi) = \gv \Phi .$$
Using the same arguments as in the proof of \eqref{conv-In}, we obtain
$$ \partial_x  F(v_n,\Psi_n)  \rightharpoonup \partial_x  F(\gv,\Phi) \quad {\rm in} \quad L^2(\R)),$$
for any $t \in [0,T].$ Hence, 
\begin{equation}
\label{convfortlocFn}
F(v_n,\Psi_n)  \longrightarrow F(\gv,\Phi) \quad {\rm in} \quad L^\infty_{loc}(\R),
\end{equation} 
for any $t \in [0,T].$ Using \eqref{conv-Psin}, \eqref{convfortlocFn} and the same arguments as in the proof of \eqref{conv-vn2psin}, we conclude that
\begin{equation}
\label{conv:boG}
\boG(v_n,\Psi_n) \rightharpoonup  \boG(\gv,\Phi) \quad {\rm in} \quad L^2(\R),
\end{equation}
for any $t \in [0,T].$ Next, we use \eqref{convfortloc-Psin} and \eqref{convfortlocFn} to prove that
\begin{equation}
\label{convfortloc:boG}
\boG(v_n,\Psi_n) \longrightarrow  \boG(\gv,\Phi) \quad {\rm in} \quad L^1([0,T],L^2_{loc}(\R)).
\end{equation}
This finishes the proof of this step.
\end{proof}

\begin{step}
\label{T3}
$(\Phi,\gv)$ is a weak solution of \eqref{eq:Psi}--\eqref{eq:v-Psi}.
\end{step} 
\begin{proof}
By \eqref{convfortloc-Psin}, we have
$$i \partial_t \Psi_n \to i \partial_t \Phi \quad {\rm in} \ \boD'(\R \times [0, T]), \quad {\rm and} \quad \partial_{xx}^2 \Psi_n \to \partial_{xx}^2 \Phi \quad {\rm in} \ \boD'(\R \times [0, T]),$$
as $n \to + \infty$. It remains to invoke \eqref{conv-Psin3} and \eqref{convfortlocFn} and to take the limit $n \to + \infty$ in the expression
$$\int_0^T \int_\R \big( i \partial_t \Psi_n + \partial_{xx}^2 \Psi_n + 2 |\Psi_n|^2 \Psi_n + \frac{1}{2} v_n^2 \Psi_n - \Re \big( \Psi \big( 1 - 2 F(v_n, \overline{\Psi_n}) \big) \big) \big( 1 - 2 F(v_n, \Psi_n) \big)\big) \overline{h} = 0,$$
where $h \in \boC_c^\infty(\R \times [0, T])$, in order to establish that $(\Phi , \gv)$ is solution to \eqref{eq:Psi} in the sense of distributions. In addition, using the same arguments as above and \eqref{convfortlocFn} we prove that $(\Phi,\gv)$ is solution to \eqref{eq:v-Psi} in the sense of distributions. Moreover, we infer from \eqref{hyp:Psi-cont1} that $\Phi(\cdot, 0) = \Psi_0$ and from \eqref{hyp:v-cont1} that $\gv(\cdot, 0) = v_0$.
\end{proof}

In order to prove that the function $(\Phi,\gv)$ coincides with the solution $(\Psi,v)$ in Proposition \ref{prop:w-cont-Psi}, it is sufficient, in view of the uniqueness result given by Proposition \ref{prop:cont-Cauchy}, to establish that
\begin{step}
\label{T4}
$\Phi \in \boC([0,T],L^2(\R))$ and $\gv \in \boC([0,T],H^1(\R)).$
\end{step}

\begin{proof}
First, we prove that $\Phi \in \boC([0,T],L^2(\R))$. This is a direct consequence of the identity
\begin{equation}
\label{duhamel:phi}
\Phi(x,t) = S(t) \Phi_0 + \int^t_0 S(t-t') \big( 2 (|\Phi|^2 \Phi)(\cdot,t') + \boF(\Phi,\gv)(\cdot,t') \big) dt'.
\end{equation}
Indeed, let us denote
$$ G(\Phi,\gv)(t) = \int^t_0 S(t-t') \big( 2 (|\Phi|^2 \Phi)(\cdot,t') + \boF(\Phi,\gv)(\cdot,t') \big) dt'.$$
Since $S(t)\Phi_0 \in \boC([0,T],L^2(\R))$, it is enough to show that $G(\Phi,\gv) \in \boC([0,T],L^2(\R)).$ We take $(t_1,t_2) \in [0,T]^2$ and we write
\begin{align*}
G(\Phi,\gv)(t_1) - G(\Phi,\gv)(t_2) =  & \int^{t_1}_0 \big( S(t_1-t') - S(t_2-t') \big) \big( 2 (|\Phi|^2 \Phi)(\cdot,t') + \boF(\Phi,\gv)(\cdot,t') \big) dt' \\
                                & - \int^{t_2}_{t_1} S(t-t')  \big( 2 (|\Phi|^2 \Phi)(\cdot,t') + \boF(\Phi,\gv)(\cdot,t') \big) dt'.
\end{align*}
For the second term in the right-hand side, we use the Stricharz and Cauchy-Schwarz inequalities to obtain
\begin{equation}
\begin{array}{ll}
\label{estim:L2Gphi}
& \Big\| \int^{t_2}_{t_1} S(t-t')  \big( 2 (|\Phi|^2 \Phi)(\cdot,t') + \boF(\Phi,\gv)(\cdot,t') \big) dt' \Big\|_{L^2} \\
& \leq M  \| 2|\Phi|^2 \Phi + \boF(\Phi,\gv) \|_{L^1([t_1,t_2],L^2(\R))}\\  
& \leq M |t_1-t_2|^{\frac{1}{2}} \||\Phi|^2 \Phi \|_{L^2_{T,x}} + M |t_1-t_2| \| \boF(\Phi,\gv) \|_{L^\infty_TL^2_x}.
\end{array}
\end{equation}
For the first term, we write
$$ S(t_1-t') - S(t_2-t') = S(t_1-t') \big( 1- S(t_2 -t_1) \big) .$$
Hence,
\begin{equation}
\label{egaltGphi}
\begin{split}
&\Big\| \int^{t_1}_0 \big( S(t_1-t') - S(t_2-t') \big) \big( 2 (|\Phi|^2 \Phi)(\cdot,t') + \boF(\Phi,\gv)(\cdot,t') \big) dt' \Big\|_{L^2} \\
& = \Big\| \big( 1 - S(t_2-t_1) \big)G(\Phi,\gv)(t_1) \Big\|_{L^2}.
\end{split}
\end{equation}
Taking the limit $t_2 \rightarrow t_1$ in \eqref{estim:L2Gphi} and \eqref{egaltGphi}, we obtain that $\Phi \in \boC([0,T],L^2(\R)).$

Now, let us prove \eqref{duhamel:phi}. Denote $\tilde{\Phi}$ the function given by the right-hand side of \eqref{duhamel:phi}. We will prove that 
\begin{equation}
\label{limitfaiblpsinphitild}
\Psi_n(t) \rightharpoonup \tilde{\Phi}(t) \quad {\rm in } \quad L^2(\R),
\end{equation}
for all $t \in \R$. This yields $\Phi \equiv \tilde{\Phi}$ by uniqueness of the weak limit.
Let $R>0$ and denote by $\chi_R$ the function defined in Step 2. Set

$$ G^{(1)}_n(\cdot,t) =  \int^t_0 S(t-t') \chi_R \big( 2 (|\Psi_n|^2 \Psi_n)(\cdot,t') + \boF(\Psi_n,v_n)(\cdot,t') \big) dt',$$
$$ G^{(2)}_n(\cdot,t) =  \int^t_0 S(t-t') (1 - \chi_R) \big( 2 (|\Psi_n|^2 \Psi_n)(\cdot,t') + \boF(\Psi_n,v_n)(\cdot,t') \big) dt',$$
$$ G^{(1)}(\cdot,t) =  \int^t_0 S(t-t') \chi_R \big( 2 (|\Phi|^2 \Phi)(\cdot,t') + \boF(\Phi,\gv)(\cdot,t') \big) dt',$$
and
$$ G^{(2)}(\cdot,t) =  \int^t_0 S(t-t') (1 - \chi_R) \big( 2 (|\Phi|^2 \Phi)(\cdot,t') + \boF(\Phi,\gv)(\cdot,t') \big) dt',$$
for all $t\in\R$, so that $G(\Phi,\gv)=G^{(1)} + G^{(2)}$ and $G(\Psi_n,v_n) = G^{(1)}_n + G^{(2)}_n$. Since $S(t)\Psi_{n,0} \rightharpoonup S(t) \Phi_0$ in $L^2(\R)$ as $n\rightarrow + \infty$ for all $t\in \R$, it is sufficient to show that $G(\Psi_n,v_n)(t) \rightharpoonup G(\Phi,\gv)(t)$ in $L^2(\R)$ as $n\rightarrow + \infty$ for all $t\in \R$.
We write 
\begin{align*}
\big( G(\Psi_n,v_n)(t)- G(\Phi,\gv)(t) , \varphi \big)_{L^2} & = \int_{-\infty}^{+\infty} \big[ G^{(1)}_n(x,t) - G^{(1)}(x,t) \big] \overline{\varphi(x)} dx \\ 
                    & + \int_{-\infty}^{+\infty} \big[ G^{(2)}_n(x,t) - G^{(2)}(x,t) \big] \overline{\varphi(x)} dx\\
            & = I^{R}_n(t) + J^{R}_n(t).
\end{align*}
For the first integral, using the Cauchy-Schwartz inequality, the Strichartz estimates for the admissible pairs $(6,6)$ and $(\infty,2)$, the Hölder inequality as well as \eqref{estim:L6/5}, there exists a positive constant $M$ such that for all $t \in [0,T]$ we have
\begin{align*}
|I^{R}_n(t)| & \leq \|G^{(1)}_n(t) - G^{(1)}(t) \|_{L^2} \|\varphi\|_{L^2} \\
             & \leq M \|\varphi\|_{L^2} \big( \||\Psi_n|^2 \Psi_n - |\Phi|^2 \Phi\|_{L^{\frac{6}{5}}_{T,R}} +  \| \boF(\Psi_n,v_n) - \boF(\Phi,\gv) \|_{L^1_TL^2_R} \big) \\
             & \leq M \|\varphi\|_{L^2} \big( \| \boF(\Psi_n,v_n) - \boF(\Phi,\gv) \|_{L^1_TL^2_R} + \| \Psi_n - \Phi \|_{L^{2}_{T,R}} \big( \| \Psi_n \|^2_{L^{6}_{T,R}} + \| \Phi \|^2_{L^{6}_{T,R}} \big)\big).
\end{align*}
Then, using \eqref{convfortloc-Psin} and \eqref{conv:bofn}, we obtain for all $t \in \R$
$$ |I^{R}_n(t)| \longrightarrow 0 \quad {\rm as} \quad n \rightarrow \infty .$$
Next, using the Hölder inequality we have
\begin{align*}
&|J_n^R(t)| \\
\leq & 2 \Big(\int_0^T \int_{-\infty}^\infty \Big||\Psi_n|^2 \Psi_n(x,t')-|\Phi|^2\Phi(x,t')\Big|^{\frac{6}{5}}   dxdt'\Big)^{\frac{5}{6}}\Big(\int_0^T \int_{|x|\geq R}|S(t-t')\varphi|^6 dx dt'\Big)^{\frac{1}{6}}  \\
                & + \int_0^T \Big( \int_{-\infty}^\infty \Big|\boF(\Psi_n,v_n)(x,t') - \boF(\Phi,\gv)(x,t')\Big|^2 dx \Big)^{\frac{1}{2}} dt'  \sup_{t' \in [0,T]} \Big( \int_{|x|\geq R}|S(t-t')\varphi(x)|^2 dx \Big)^{\frac{1}{2}}.
\end{align*}
The terms in the right-hand side are bounded by a constant independent of $n$. Besides, since $(6,6)$ and $(\infty,2)$ are admissible pairs, we have $\|S(t)\varphi\|_{L^6_{T,x}} \leq M \|\varphi\|_{L^2(\R)}$ and\\ $\|S(t)\varphi\|_{L^\infty_TL^2(\R)} \leq M \|\varphi\|_{L^2(\R)}$, so that, by the dominated convergence theorem and the fact that $t \mapsto S(t) $ is uniformly continuous from $[0,T]$ to $L^2(\R)$, we obtain
$$ \lim_{R\to\infty}\int_0^T\!\!\int_{|x|\geq R}|S(t)\varphi|^6 dx dt = \lim_{R\to\infty} \sup_{t \in [0,T]} \Big( \int_{|x|\geq R}|S(t)\varphi(x)|^2 dx \Big)^{\frac{1}{2}} = 0.$$
Hence,
$$ \lim_{R\to\infty} |J_n^R(t)|=0 \quad \mbox{uniformly with respect to}\;\; n\in\mathbb{N},$$
for any $ t \in [0,T].$ This completes the proof of \eqref{limitfaiblpsinphitild} and then of \eqref{duhamel:phi}. This leads to the fact that  $\Phi \in \boC^0([0,T],L^2(\R))$. 

 Now, let us prove that $\gv \in \boC^0([0,T],H^1(\R)).$ Since $(\Phi,\gv)$ verifies the first equation in \eqref{eq:v-Psi}, $\Phi \in L^\infty([0,T],L^2(\R))$ and $F(\Psi,\gv)\in L^\infty([0,T],L^\infty(\R)) $, we have $\gv \in H^1([0,T],H^{-1}(\R)).$ This yields, using the Sobolev embedding theorem, $\gv \in \boC^0([0,T],H^{-1}(\R)).$ Let $(t_1,t_2) \in [0,T]^2$. We can write that
\begin{align*}
\int_{\R} \big|\gv(t_1,x) - \gv(t_2,x) \big|^2 dx & = \Big< \gv(t_1,x) - \gv(t_2,x),\gv(t_1,x) - \gv(t_2,x) \Big>_{H^{-1},H^1}\\
                                                    & \leq \| \gv(t_1,x) - \gv(t_2,x)\|_{H^{-1}}  \| \gv(t_1,x) - \gv(t_2,x)\|_{H^1}.
\end{align*}
Since $\gv \in \boC^0([0,T],H^{-1}(\R)) \cap L^\infty([0,T],H^{1}(\R)),$ we obtain $\gv \in \boC^0([0,T],L^2(\R)).$ Next, we write
\begin{align*}
& \big\| F(\gv,\Phi)(t_1) - F(\gv,\Phi)(t_2) \big\|_{L^\infty(\R)} \\
              & \leq \|\gv(t_1) - \gv(t_2)\|_{L^2} \|\Phi(t_1)\|_{L^2} + \|\Phi(t_2) - \Phi(t_1)\|_{L^2} \|\gv(t_2)\|_{L^2}.
\end{align*}
Using the fact that $\Phi,$ $\gv \in \boC^0([0,T],L^2(\R)),$ we infer that $ F(\gv,\Phi) \in \boC^0([0,T],L^\infty(\R)).$ Then, by the second equation in \eqref{eq:v-Psi}, $\gv \in \boC^0([0,T],H^1(\R))$. This finishes the proof of this step. 
\end{proof}
This achieves the proof of Proposition \ref{prop:w-cont-Psi}.
\end{proof}
Finally, we give the proof of Proposition \ref{prop:w-cont-Q}.
\begin{proof}
In view of Proposition \ref{prop:w-cont-Psi}, it is sufficient to prove the convergence of $w_n$. The proof follows the arguments in the proof of \eqref{convfaible:bofn}. Let $\phi \in L^2(\R)$. We rely on \eqref{eq:w^*} to write
\begin{align*}
&\int_\R \big[ w^*(t,x) - w_n(t,x)\big] \phi(x) dx \\
& = 2 \int_\R \Im \Big( \frac{\Psi^*(t,x) \big( 1- 2 F(v^*,\Psi ^*)(t,x)\big)}{1-(v^*)^2(t,x)} - \frac{\Psi_n(t,x) \big( 1- 2 F(v_n,\Psi_n)(t,x)\big)}{1-(v_n)^2(t,x)} \Big) \phi(x) dx \\
& = 2 \int_\R \Im \Big( \frac{\Psi^*(t,x)}{1-(v^*)^2(t,x)} - \frac{\Psi_n(t,x)}{1-(v_n)^2(t,x)} \Big) \phi(x) dx \\
& \ \ - 4 \int_\R \Im \Big( \frac{\Psi^*(t,x) F(v^*,\Psi ^*)(t,x)}{1-(v^*)^2(t,x)} - \frac{\Psi_n(t,x) F(v_n,\Psi_n)(t,x)}{1-(v_n)^2(t,x)} \Big) \phi(x) dx, 
\end{align*}
for all $t \in [0,T].$ Then, we use the same arguments as in the proof of \eqref{convfaible:bofn} to show that the two last terms in the right-hand side go to $0$ when $n$ goes to $+\infty$. This finishes the proof of the proposition. 
\end{proof}
\subsection{Exponential decay of $\chi_c$}
In this subsection, we recall the explicit formula and some useful properties of the operator $\boH_c,$ and then study its negative eigenfunction $\chi_c$.
For $c \in (- 1, 1) \setminus \{ 0 \}$, the operator $\boH_c$ is given in explicit terms by
\begin{equation}
\label{def:boHc}
\boH_c(\eps) = \begin{pmatrix} \boL_c(\eps_v) + c^2 \frac{(1 + v_c^2)^2}{(1 - v_c^2)^3} \eps_v - c \frac{1 + v_c^2}{1 - v_c^2} \eps_w \\ - c \frac{1 + v_c^2}{1 - v_c^2} \eps_v + (1 - v_c^2) \eps_w \end{pmatrix},
\end{equation} 
where $\eps =(\eps_v,\eps_w)$, and $$\boL_c(\eps_v) = - \partial_x \bigg( \frac{\partial_x \eps_v}{1 - v_c^2} \bigg) + \Big( 1 - c^2 - (5 + c^2) v_c^2 + 2 v_c^4 \Big) \frac{\eps_v}{(1 - v_c^2)^2}.$$
In view of \eqref{def:boHc}, the operator $\boH_c$ is an isomorphism from $H^2(\R)\times L^2(\R) \cap \Span(\partial_x Q_c)^\perp$ onto $\Span(\partial_x Q_c)^\perp$. In addition, there exists a positive number $A_c$, depending continuously on $c$, such that
\begin{equation}
\label{inv:Hc}
\big\| \boH_c^{- 1}(f, g) \big\|_{H^{k + 2}(\R) \times H^k(\R)} \leq A_c \big\| (f, g) \big\|_{H^k(\R)^2},
\end{equation}
for any $(f, g) \in H^k(\R)^2 \cap \Span(\partial_x Q_c)^\perp$ and any $k \in \N$.

The following proposition establishes the coercivity of the quadratic form $H_c$ under suitable orthogonality conditions. 
\begin{prop}
\label{prop:coer-single}
Let $c \in (- 1, 1) \setminus \{0\}$. There exists a positive number $\Lambda_c$, depending only on $c$, such that
\begin{equation}
\label{eq:coer-Qc}
H_c(\eps) \geq \Lambda_c \| \eps \|_{H^1 \times L^2}^2,
\end{equation}
for any pair $\eps \in H^1(\R) \times L^2(\R)$ satisfying the two orthogonality conditions
\begin{equation}
\label{eq:ortho-Qc}
\langle \partial_x Q_c, \eps \rangle_{L^2 \times L^2} = \langle \chi_c, \eps \rangle_{L^2 \times L^2} = 0.
\end{equation}
Moreover, the map $c \mapsto \Lambda_c$ is uniformly bounded from below on any compact subset of $(- 1, 1) \setminus \{ 0 \}$.
\end{prop}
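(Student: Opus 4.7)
The plan is to exploit the spectral structure of the self-adjoint operator $\boH_c$ on $L^2(\R)^2$. Recall that its kernel is $\Span(\partial_x Q_c)$ and it has a unique simple negative eigenvalue $-\tilde\lambda_c$ with normalized eigenfunction $\chi_c$. The first key step is to identify the essential spectrum of $\boH_c$. Using the explicit formula \eqref{def:boHc} together with the exponential decay of $v_c$, $w_c$, and their derivatives as $|x|\to\infty$, the Weyl criterion reduces the computation of $\sigma_{\rm ess}(\boH_c)$ to that of the constant-coefficient operator
$$\boH_{c,\infty}(\eps) := \begin{pmatrix} -\partial_x^2 \eps_v + (1-c^2) \eps_v + c^2 \eps_v - c \eps_w \\ -c \eps_v + \eps_w \end{pmatrix},$$
whose spectrum is computed by Fourier transform as the range of the eigenvalues of the $2\times 2$ matrix $\bigl(\begin{smallmatrix} \xi^2 + 1 & -c \\ -c & 1 \end{smallmatrix}\bigr)$ for $\xi \in \R$. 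A direct computation shows that these eigenvalues are uniformly bounded below by a positive number $\sigma_c > 0$ depending continuously on $c \in (-1,1)\setminus\{0\}$. Hence $\sigma_{\rm ess}(\boH_c) \subseteq [\sigma_c, +\infty)$, producing a spectral gap between $0$ and the essential spectrum.

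The second step is the $L^2$ coercivity. The spectral theorem applied to $\boH_c$ provides a decomposition
$$L^2(\R)^2 = \Span(\chi_c) \oplus \Span(\partial_x Q_c) \oplus E_+,$$
where $E_+$ is the positive spectral subspace, on which $\boH_c \geq \sigma_c$ by the spectral gap. Under the orthogonality conditions \eqref{eq:ortho-Qc}, a pair $\eps \in H^1(\R)\times L^2(\R)$ lies in the closure of $E_+$ in $L^2(\R)^2$. Consequently
$$H_c(\eps) \geq \sigma_c \| \eps \|_{L^2 \times L^2}^2.$$

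The third step is to upgrade this $L^2$ control into the $H^1 \times L^2$ bound claimed in \eqref{eq:coer-Qc}. Pairing \eqref{def:boHc} with $\eps$ and using that $1-v_c^2 \geq \sigma_c' > 0$ uniformly in $x$ (thanks to the fact that $\|v_c\|_{L^\infty} < 1$), one sees that
$$H_c(\eps) \geq \sigma_c' \int_\R (\partial_x \eps_v)^2 - A_c \|\eps\|_{L^2 \times L^2}^2,$$
for some constant $A_c$ depending only on $c$, coming from the bounded coefficients involving $v_c$, its derivatives, and $c$. Combining this inequality with the $L^2$ coercivity via a convex combination, that is writing $H_c(\eps) = \tau H_c(\eps) + (1-\tau) H_c(\eps)$ and choosing $\tau \in (0,1)$ small enough so that $(1-\tau)\sigma_c - \tau A_c > 0$, we obtain \eqref{eq:coer-Qc} with a positive $\Lambda_c$.

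Finally, for the local uniform positivity of $c \mapsto \Lambda_c$ on compact subsets of $(-1,1)\setminus\{0\}$, we note that $\sigma_c$ depends continuously on $c$ by the explicit formula arising from the Fourier computation, $\sigma_c'$ depends continuously on $c$ via \eqref{form:vc}, and $A_c$ depends continuously on $c$ by the smooth dependence of $v_c, w_c$ and their derivatives on $c$. The optimal $\Lambda_c$ produced in the previous step is a continuous function of these quantities, hence continuous in $c$, which yields the required uniform lower bound on any compact subset of $(-1,1)\setminus\{0\}$. The main delicate point is the identification of the essential spectrum and the verification that the induced spectral gap is strictly positive and varies continuously in $c$; once this spectral information is in hand, the remaining argument is the standard coercivity-upgrade mechanism.
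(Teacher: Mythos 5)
Your overall architecture --- a positive spectral gap for $\boH_c$ above $0$, the $L^2$ coercivity on the positive spectral subspace under the two orthogonality conditions, and the standard convex-combination upgrade to $H^1\times L^2$ --- is sound, and your Steps 2--4 go through once Step 1 is secured. The genuine gap is in Step 1: for this system the Weyl criterion does \emph{not} reduce $\sigma_{\rm ess}(\boH_c)$ to the spectrum of the constant-coefficient operator at infinity. The perturbation $\boH_c-\boH_{c,\infty}$ contains the multiplication operator $-v_c^2\,\eps_w$ in the second component, and since $\boH_{c,\infty}$ is not elliptic in that component its resolvent provides no smoothing there, so the perturbation is not relatively compact. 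In fact $\sigma_{\rm ess}(\boH_c)$ contains the whole essential range $[c^2,1]$ of the coefficient $1-v_c^2$: for $\lambda=1-v_c^2(x_0)$ outside the discrete spectrum of the scalar operator $L_c:=\boL_c+c^2(1+v_c^2)^2(1-v_c^2)^{-3}$, the pairs $u_n=(f_n,g_n)$ with $g_n(x)=n^{1/2}\phi(n(x-x_0))$ and $f_n:=(L_c-\lambda)^{-1}\bigl(c\tfrac{1+v_c^2}{1-v_c^2}\,g_n\bigr)$ form a singular Weyl sequence, because $\|(1-v_c^2-\lambda)g_n\|_{L^2}=O(n^{-1})$ and $\|f_n\|_{L^2}=O(n^{-1/2})$ by the exponential decay of the Green's function of $L_c-\lambda$. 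Since $c^2<1-|c|$ whenever $0<|c|<(\sqrt 5-1)/2$, your claimed inclusion $\sigma_{\rm ess}(\boH_c)\subseteq[1-|c|,+\infty)$ is false in that range.

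The conclusion nevertheless survives, because $\inf\sigma_{\rm ess}(\boH_c)=\min\{c^2,\,1-|c|\}$ is still positive for $c\neq0$ (its degeneration like $c^2$ as $c\to0$ is exactly why $\Lambda_c$ is only claimed to be bounded below on compact subsets of $(-1,1)\setminus\{0\}$). To prove that there is no essential spectrum below this threshold you must first eliminate the non-elliptic component, e.g.\ by completing the square in $\eps_w$,
\[
H_c(\eps)=\big\langle\boL_c(\eps_v),\eps_v\big\rangle_{L^2}+\int_\R(1-v_c^2)\Big(\eps_w-\frac{c(1+v_c^2)}{(1-v_c^2)^2}\,\eps_v\Big)^2,
\]
which reduces the whole analysis to the scalar Sturm--Liouville operator $\boL_c$ (one checks $\boL_c(\partial_xv_c)=0$ from $\boH_c(\partial_xQ_c)=0$; since $\partial_xv_c$ has a single zero, $\boL_c$ has exactly one negative eigenvalue, and its essential spectrum is genuinely $[1-c^2,+\infty)$ by the scalar Weyl argument). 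Equivalently, for $\lambda<c^2$ one may solve the second, purely algebraic, line of the eigenvalue equation for $\eps_w$ and pass to the Schur complement, again a scalar Schr\"odinger operator. This scalar reduction is what the paper's one-line citation of ``standard Sturm--Liouville theory'' and of Proposition 1 in de Laire--Gravejat is pointing to; with the corrected value of the gap, the remainder of your argument is unchanged.
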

The proof relies on standard Sturm-Liouville theory (see e.g. the proof of Proposition 1 in \cite{DeLGr} for more details). 

Now, we turn to the analysis of the pair $\chi_c$.
\begin{lem}
\label{lem-chi-c}
The pair $\chi_c$ belongs to $\boC^{\infty}(\R) \times \boC^{\infty}(\R)$. In addition, there exist two positive numbers $A_c$ and $a_c$, depending continuously on $c$, such that $a_c > \sqrt{1-c^2}$ and
\begin{equation}
\label{dec:chi-c}
|\partial^k_x \chi_c| \leq A_c e^{-a_c|x|} \quad \textsl{on} \quad \R \quad \textsl{for} \quad k \in \{0,1,2\}.
\end{equation}

\end{lem}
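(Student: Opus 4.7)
The plan is to reduce the vector eigenvalue equation $\boH_c \chi_c = -\tilde{\lambda}_c \chi_c$ to a scalar second-order ODE and then analyse its behaviour at infinity. Writing $\chi_c = (\chi_{c,1}, \chi_{c,2})$ and reading off the second component of \eqref{def:boHc}, which contains no derivatives, one gets the algebraic identity
\begin{equation*}
\chi_{c,2} = \frac{c(1+v_c^2)}{(1-v_c^2)\bigl[(1-v_c^2)+\tilde{\lambda}_c\bigr]}\,\chi_{c,1},
\end{equation*}
whose denominator is uniformly bounded below since $\|v_c\|_{L^\infty}<1$ and $\tilde{\lambda}_c>0$. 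Plugging this into the first component converts the system into a single scalar linear ODE
\begin{equation*}
-\partial_x\!\Bigl(\frac{\partial_x \chi_{c,1}}{1-v_c^2}\Bigr) + W_c(x)\,\chi_{c,1} = 0,
\end{equation*}
with smooth, bounded coefficients built from $v_c$, $c$, and $\tilde{\lambda}_c$.

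Smoothness of $\chi_c$ then follows by a standard elliptic bootstrap: the principal coefficient $1/(1-v_c^2)$ is smooth and uniformly bounded below, so starting from $\chi_{c,1}\in L^2(\R)$, one iteratively improves the regularity to $\chi_{c,1}\in H^k(\R)$ for every $k\in\N$, hence $\chi_{c,1}\in\boC^\infty(\R)$. The algebraic formula transfers this smoothness to $\chi_{c,2}$.

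For the exponential decay, I would analyse the scalar ODE at infinity. Since $v_c$ and its derivatives decay at exponential rate $\sqrt{1-c^2}$, the coefficient $W_c(x)$ converges to the positive constant
\begin{equation*}
\kappa_c := \frac{(1+\tilde{\lambda}_c)^2-c^2}{1+\tilde{\lambda}_c},
\end{equation*}
with $|W_c(x)-\kappa_c|=\boO(v_c(x)^2)$. Factoring the quadratic $\mu^2-(1-c^2)\mu-c^2$ at the value $\mu = 1 + \tilde{\lambda}_c > 1$ shows $\kappa_c > 1-c^2$, strictly, using only $\tilde{\lambda}_c>0$; continuous dependence of $\kappa_c$ on $c$ is inherited from that of $\tilde{\lambda}_c$. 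Choosing any $a_c \in (\sqrt{1-c^2},\sqrt{\kappa_c})$ (which we can also make depend continuously on $c$), I would then run an Agmon-type weighted estimate: multiply the ODE by $e^{2a_c\phi(x)}\chi_{c,1}$ for a smooth bounded approximation $\phi$ of $|x|$, integrate by parts, and absorb the perturbation terms using the exponential decay of $W_c-\kappa_c$ together with the strict inequality $a_c^2<\kappa_c$. This yields $e^{a_c|x|}\chi_{c,1}\in L^2(\R)$, and a Sobolev-type embedding applied to the weighted function then gives the pointwise bound $|\chi_{c,1}(x)|\leq A_c e^{-a_c|x|}$.

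The bounds on $\partial_x\chi_{c,1}$ and $\partial_{xx}\chi_{c,1}$ then follow immediately: rearranging the scalar ODE expresses $\chi_{c,1}''$ in terms of $\chi_{c,1}$ and $\chi_{c,1}'$ with smooth bounded coefficients, while a standard weighted energy estimate controls $\chi_{c,1}'$. The decay of $\chi_{c,2}$ and of its derivatives up to order two is transferred through the smooth bounded multiplier in the algebraic relation. The main technical obstacle is handling the non-constant principal coefficient $1/(1-v_c^2)$ cleanly in the Agmon step; I would do so by rewriting the equation in the form $-\chi_{c,1}''+r_c(x)\chi_{c,1}'+\widetilde W_c(x)\chi_{c,1}=0$, where $r_c$ decays exponentially and $\widetilde W_c\to\kappa_c$ at an exponential rate, reducing the argument to the classical constant-coefficient weighted estimate with small perturbations.
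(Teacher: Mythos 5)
Your proposal is correct and follows essentially the same route as the paper: you eliminate the second component through the derivative-free algebraic relation $c\frac{1+v_c^2}{1-v_c^2}\chi_{c,1}=(1-v_c^2+\tilde{\lambda}_c)\chi_{c,2}$, bootstrap smoothness from the scalar elliptic equation, and identify the same limiting coefficient $\kappa_c=\frac{(1+\tilde{\lambda}_c)^2-c^2}{1+\tilde{\lambda}_c}$ (the paper's $b_c^2$) together with the same strict inequality $\kappa_c>1-c^2$. The only difference is the last step, where the paper obtains the decay by writing the scalar equation as $-\partial_{xx}\zeta_c+b_c^2\zeta_c=g_c$ with $g_c=\boO(e^{-2\sqrt{1-c^2}|x|})$ and solving by variation of constants, whereas you run an Agmon-type weighted energy estimate; both are standard and yield a rate $a_c>\sqrt{1-c^2}$.
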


\begin{proof}
We denote $\chi_c := (\zeta_c,\xi_c).$ Since $\boH_c(\chi_c) = - \tilde{\lambda}_c \chi_c$, we have the following system
\begin{equation}
\begin{split}
\label{syst:chi-c1}
&-\partial_x \bigg( \frac{\partial_x \zeta_c}{1 - v_c^2} \bigg) + \Big( 1 - c^2 - (5 + c^2) v_c^2 + 2 v_c^4 \Big) \frac{\zeta_c}{(1 - v_c^2)^2} + c^2 \frac{(1 + v_c^2)^2}{(1 - v_c^2)^3} \zeta_c\\
& - c \frac{1 + v_c^2}{1 - v_c^2} \xi_c = -\tilde{\lambda}_c \zeta_c, 
\end{split}
\end{equation}

\begin{equation}
\begin{split}
& c \frac{1 + v_c^2}{1 - v_c^2} \zeta_c = (1 - v_c^2 + \tilde{\lambda}_c) \xi_c.
\label{syst:chi-c2}
\end{split}
\end{equation}
It follows from standard elliptic theory that $\chi_c \in H^2(\R) \times L^2(\R)$. Since the coefficients in \eqref{syst:chi-c2} are smooth, bounded from below and above, we infer from a standard bootstrap argument that $\chi_c \in \boC^\infty(\R) \times \boC^\infty(\R)$. Notice in particular that, by the Sobolev embedding theorem, $\chi_c$ and $\partial_x \chi_c$ are bounded on $\R$. Then, we deduce from the first statement in \eqref{part_vc} that\footnote{The notation $\boO (v_c^2)$ refers to a quantity, which is bounded by $A_c v_c^2$ (pointwisely), where the positive number $A_c$ depends only on $c$.}
\begin{align}
\label{syst:1}
&-\partial_{xx} \zeta_c + (1+\tilde{\lambda}_c) \zeta_c - c \xi_c = \boO (v_c^2) , \\
& \zeta_c =  \frac{1 + \tilde{\lambda}_c}{c} \xi_c + \boO(v_c^2).
\label{syst:2}
\end{align}
Note that, we have
\begin{equation}
\label{borne:v_c}
 B_c \exp(-\sqrt{1-c^2}|x|) \leq v_c(x) \leq A_c \exp(-\sqrt{1-c^2}|x|) \quad \textsl{for all} \quad x \in \R,
\end{equation}
where $B_c$ and $A_c$ are two positive numbers.

In order to prove \eqref{dec:chi-c}, we now introduce \eqref{syst:2} into \eqref{syst:1} to obtain,
\begin{align}
\label{systbis:1}
&-\partial_{xx} \zeta_c + b^2_{c} \zeta_c = \boO(\exp(-2\sqrt{1-c^2}|x|)),\\
& \xi_c =  \frac{c}{1 + \tilde{\lambda}_c} \zeta_c + \boO(\exp(-2\sqrt{1-c^2}|x|)),
\label{systbis:2}
\end{align}
with $b^2_{c} = \frac{1-c^2+ 2 \tilde{\lambda}_c + (\tilde{\lambda}_c)^2}{1+\tilde{\lambda}_c} > 1-c^2.$ Next, we set
\begin{equation}
\label{ODE:xi_c}
g_c := -\partial_{xx} \zeta_c + b^2_{c} \zeta_c ,
\end{equation}
so that $g_c(x)=\boO(\exp(-2\sqrt{1-c^2}|x|))$ for all $x \in \R$. Using the variation of constant method, we obtain, for all $x \in \R$,
$$ \zeta_c(x) = A(x) e^{b_c x} + A_c e^{b_c x}  + B(x) e^{-b_c x} + B_c e^{-b_c x},$$
with 
$$ A(x) = \frac{-1}{2 b_c}\int_0^x e^{-b_c t} g_c(t) dt ,$$
and
$$ B(x) = \frac{-1}{2 b_c}\int_0^x e^{b_c t} g_c(t) dt.$$
Since $\zeta_c \in L^2(\R)$, this leads to
$$\zeta_c(x)= \boO\big( \exp\big(-2\sqrt{1-c^2} |x| \big) +  \exp(-b_c |x|)\big).$$
Hence, we can take $a_c= \min\{2\sqrt{1-c^2},b_c\}$ and invoke \eqref{syst:2} to obtain \eqref{dec:chi-c} for $k=0$. Using \eqref{eq:vc}, \eqref{part_vc}, \eqref{syst:chi-c1}, \eqref{syst:chi-c2} and \eqref{borne:v_c}, we extend  \eqref{dec:chi-c} to $k \in {1,2}$.

\end{proof}

\begin{merci}
I would like to thank both my supervisors R. Côte and P. Gravejat for their support throughout the time I spent to finish this manuscript, offering invaluable advices.
I am also grateful to Y. Martel for interesting and helpful discussions. I am also indebted to the anonymous referee for valuable comments and remarks. This work is supported by a PhD grant from "Région Ile-de-France" and is partially sponsored by the project ``Schr\"odinger equations and applications'' (ANR-12-JS01-0005-01) of the Agence Nationale de la Recherche.
\end{merci}
\bibliographystyle{plain}

\end{document}